\newcommand{\sbt}{\,\begin{picture}(-1,1)(-1,-3)\circle*{3}\end{picture}\ }
\DeclareMathOperator{\Hom}{Hom}
\DeclareMathOperator{\rk}{rank}
\DeclareMathOperator{\id}{Id}
\DeclareMathOperator{\pr}{pr}
\DeclareMathOperator{\SL}{SL}
\DeclareMathOperator{\sign}{sgn}
\DeclareMathOperator{\st}{st}
\DeclareMathOperator{\Lie}{Lie}
\DeclareMathOperator{\mul}{\mathbf{Mult}}
\DeclareMathOperator{\GL}{GL}
\DeclareMathOperator{\Gr}{Gr}
\DeclareMathOperator{\diag}{diag} 
\newcommand{\hw}{\mathrm{hw}}
\newtheorem{Thm}{Theorem}[section]
\newtheorem{Main}[Thm]{Main Theorem}
\newtheorem{Pro}[Thm]{Proposition}
\newtheorem{Lem}[Thm]{Lemma}
\newtheorem{Cor}[Thm]{Corollary}
\newtheorem{Con}[Thm]{Conjecture}
\theoremstyle{definition}
\newtheorem{Def}[Thm]{Definition}
\newtheorem{Ex}[Thm]{Example}
\theoremstyle{remark}
\newtheorem{Rmk}[Thm]{Remark}
\newcommand\nnfootnote[1]{%
  \begin{NoHyper}
  \renewcommand\thefootnote{}\footnote{#1}%
  \addtocounter{footnote}{-1}%
  \end{NoHyper}
}
\begin{document}
\title{Geometric Multiplicities}
\author{Arkady Berenstein \and Yanpeng Li}

\newcommand{\Addresses}{{
  \bigskip
  \footnotesize
  
  \textsc{Department of Mathematics, University of Oregon, Eugene, OR 97403, USA}\par\nopagebreak
  \textit{E-mail address}: \texttt{arkadiy@uoregon.edu}
  
  \medskip

  \textsc{Section of Mathematics, University of Geneva, 2-4 rue du Li\`evre, c.p. 64, 1211 Gen\`eve 4, Switzerland}\par\nopagebreak
  \textit{E-mail address}: \texttt{yanpeng.li@unige.ch}

}}
\date{}
\maketitle

\nnfootnote{\emph{Keywords:} Representations, Multiplicities, and Positivity}

\begin{abstract}
  In this paper, we introduce geometric multiplicities, which are  positive varieties with potential fibered over the Cartan subgroup $H$ of a reductive group $G$. They form a monoidal category and we construct a monoidal functor from this category to the representations of the Langlands dual group $G^\vee$ of $G$. Using this, we explicitly compute various multiplicities in $G^\vee$-modules in many ways. In particular,  we recover the formulas for tensor product multiplicities from \cite{BZ01} and generalize them in several directions. In the case when our geometric multiplicity $X$ is a monoid, {\em i.e.}, the corresponding $G^\vee$ module is an algebra,  we expect that in many cases, the spectrum of this algebra is affine $G^\vee$-variety $X^\vee$, and thus the correspondence $X\mapsto X^\vee$  has a flavor of both the Langlands duality and mirror symmetry.
\end{abstract}

\tableofcontents

\section{Introduction}

The goal of this paper is twofold: 
\begin{itemize}
  \item To continue and, to some extent, complete the ``multiplicity geometrization'' program, originated in \cite{BKI, BKII, BZ88,BZ92,BZ01}.

  \item To use our geometric multiplicities to recover all known and obtain many new formulas for such classical multiplicities as tensor product multiplicities, weight multiplicities, etc emerging in representation of complex reductive groups. 
\end{itemize}
In our approach, a {\em geometric multiplicity} is a positive variety with potential fibered over the Cartan subgroup $H$ of a reductive group $G$. They form a category, which we denote it by $\mul_G$ (see Definition \ref{mulg} for more details). 
\begin{Thm}[Theorem \ref{Associator}]
    The category is a non-strict (uniteless) monoidal category with product $M_1\star M_2$ given by
    \[
        M_1\star M_2:= M_1\times M_2\times U.
    \]
    (Here $U$ is the maximal unipotent subgroup of $G$ normalized by $H$.) 
\end{Thm}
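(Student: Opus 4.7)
The plan is to verify the axioms of a non-strict uniteless monoidal structure on $\mul_G$: that $\star$ upgrades to a bifunctor $\mul_G\times\mul_G\to\mul_G$, and that there is a natural associator
\[
\alpha_{M_1,M_2,M_3}\colon (M_1\star M_2)\star M_3\to M_1\star(M_2\star M_3)
\]
satisfying the pentagon identity. Every piece must be checked to respect the three pieces of structure carried by an object of $\mul_G$: the positive atlas, the potential, and the fibration over $H$.

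First I would verify that $M_1\star M_2=M_1\times M_2\times U$ is itself an object of $\mul_G$. The factor $U$ carries its canonical Berenstein--Kazhdan positive structure and standard unipotent potential $\chi_U$ (the sum of elementary characters attached to simple roots), and $M_1\times M_2\times U$ inherits the product positive structure together with the potential $\Phi_1+\Phi_2+\chi_U$. The map to $H$ should combine the $H$-images of $M_1$ and $M_2$ via the group law of $H$ (with possibly a twist absorbing the $H$-action on $U$); this is manifestly positive. Functoriality in each argument is then immediate, as morphisms are Cartesian products with $\mathrm{id}_U$.

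Second, and this is the technical heart of the proof, I would construct the associator. Both $(M_1\star M_2)\star M_3$ and $M_1\star(M_2\star M_3)$ have underlying varieties of the form $M_1\times M_2\times M_3\times U\times U$, so $\alpha$ must be a birational self-map of $M_1\times M_2\times M_3\times U\times U$ that is the identity on the first three factors and that reshuffles the two $U$-factors in a way that corrects for the different $H$-twists appearing on the two sides. The natural candidate uses the group multiplication in $U$ together with the $H$-conjugation action: in $(M_1\star M_2)\star M_3$ the outer $U$-factor sees the combined $H$-weight of $M_1,M_2$, whereas in $M_1\star(M_2\star M_3)$ it sees that of $M_1$ alone. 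The resulting map $(u_1,u_2)\mapsto(u_1',u_2')$ should be expressible as a subtraction-free rational formula in Lusztig coordinates on $U$. The main obstacle is then a threefold check: that $\alpha$ is a positive equivalence, that it preserves $\Phi_1+\Phi_2+\Phi_3+\chi_U+\chi_U$ on the nose, and that it commutes with the map to $H$. Each reduces to a computation on $U\times U$ using the standard identities for the BK positive structure and the invariance of $\chi_U$ under conjugation by $H$.

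Finally, I would check the pentagon. Both branches take $(M_1\star M_2)\star(M_3\star M_4)$, routed through the five pentagon vertices, into itself as maps on $M_1\times M_2\times M_3\times M_4\times U^{\times 3}$, and after stripping the $M_i$ factors the identity reduces to a universal identity in $U^{\times 3}$. This should follow essentially formally from associativity of the multiplication in $U$ together with associativity of the $H$-twist, so once Step 2 is in hand pentagon is bookkeeping. Naturality of $\alpha$ in each $M_i$ is equally formal, as the $M_i$-dependence of $\alpha$ only enters through the $H$-images of the $M_i$, which transform functorially.
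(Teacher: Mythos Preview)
Your approach has two genuine gaps.

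First, the potential on $M_1\star M_2$ is not $\Phi_1+\Phi_2+\chi_U$. By Definition~\ref{mulg} it is $\Phi_1+\Phi_2+\overline{\Delta}_2(u,h_1,h_2)$, and the computation in Proposition~\ref{posofg2} gives
\[
\overline{\Delta}_2(u,h_1,h_2)=\chi^{\st}(u)+\Phi_{BK}(h_2u^Th_1^{w_0}).
\]
The second summand depends nontrivially on $h_1,h_2$ and on $u$, so the potential is not a sum of independent pieces on the three factors. This already breaks your ``threefold check'': you would be verifying that the associator preserves the wrong function, and the argument that $\chi_U$ is $H$-conjugation invariant does not help with the extra $\Phi_{BK}$ term.

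Second, the associator is not obtained from group multiplication in $U$ plus an $H$-twist. Example~\ref{mainEx} computes $\Psi$ for $\GL_2$ explicitly: it sends $(u_1,u_2;e_i,f_i)$ to
\[
\Bigl(\tfrac{e_2}{u_1 f_2}+u_2,\ \tfrac{u_1 u_2 f_2}{u_1^{-1}e_2+f_2 u_2};\ e_i,f_i\Bigr),
\]
which is not a conjugated product of $u_1$ and $u_2$. The paper's construction (Proposition~\ref{formforasso} and formula~\eqref{assoformula}) builds $\Psi$ as $\overline{F}_{1,23}\circ\overline{F}_{12,3}^{-1}$, where $F_{12,3},F_{1,23}$ are the two trivializations of $G^{(3)}$ coming from \eqref{trivilization}. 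Conceptually, the associator exists because both bracketings of $M_1\star M_2\star M_3$ arise as $U\backslash X/U$ for the \emph{same} unipotent bicrystal $X$, via two different trivializations.

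This is exactly the mechanism in the paper's proof (Section~\ref{section proofs1}): one builds functors $\mathcal{F},\mathcal{G}$ giving an equivalence $\mul_G\simeq\mathbf{TriUB}_G$, checks $\mathcal{G}(M\star N)\cong\mathcal{G}(M)*\mathcal{G}(N)$, and then transports the \emph{strict} associativity of the convolution $*$ on $\mathbf{TriUB}_G$ back to $\mul_G$. The pentagon then comes for free from strict associativity upstairs; trying to verify it directly on $U^{\times 3}$, as you propose, would be a substantial computation for which you have not supplied a candidate identity.
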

The associator for $\mul_G$ is extremely non-trivial. We construct it via embedding $\mul_G$ into the monoidal category of decorated decorated $U\times U$-bicrystals, see Section \ref{section proofs1}. In fact, even though the category $\mul_G$ has no unit, it has a natural tropicalization functor $\mathcal{T}$ (see Proposition \ref{prop;equivofcats}) to what we call {\em affine tropical varieties} (see Definition \ref{afftrop}). The latter category is an ``honest'' monoidal category with the natural unite $0$ (thus, in what follows, we will ignore this minor deficiency of $\mul_G$).

Using this tropicalization functor $\mathcal{T}$, we can recover many interesting representation of the Langlands dual group $G^\vee$ of $G$ and various multiplicities in them out of geometric multiplicities over $G$ as follows. 

First, to any $M\in \mul_G$ we assign an affine tropical variety $M^t:=\mathcal{T}(M)$ fibered over the dominant coweight monoid $P^\vee_+$ of $G$. Then assign a $G^\vee$-module $\mathcal{V}(M)$ to $M$ via
\[
  \mathcal{V}(M)=\bigoplus_{\lambda^{\vee}\in P^{\vee}_+} \mathbb{C}[M^t_{\lambda^{\vee}}]\otimes V_{\lambda^{\vee}},
\]
where $M^t_{\lambda^\vee}$ is the tropical fiber over $\lambda^\vee$, $\mathbb{C}[\cdot]$ is the linearization of the set, and $V_{\lambda^\vee}$ is the irreducible representation of $G^\vee$ with highest weight $\lambda^\vee$  (thus we passed from the geometric multiplicities to the algebraic ones).
\begin{Thm}[Theorem \ref{functor}]
  The assignments $M\mapsto \mathcal{V}(M)$ define a monoidal functor $\mul_G$ to $\mathbf{Mod}_{G^\vee}$, the category of $G^\vee$-modules.
\end{Thm}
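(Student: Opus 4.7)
The plan is to verify the three components that constitute a monoidal functor: (i) $\mathcal{V}(M)$ is a well-defined $G^\vee$-module; (ii) a morphism $M_1 \to M_2$ in $\mul_G$ is sent to a $G^\vee$-equivariant map, functorially; (iii) there is a natural coherent isomorphism $\mathcal{V}(M_1) \otimes \mathcal{V}(M_2) \cong \mathcal{V}(M_1 \star M_2)$.

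Item (i) is immediate from the defining formula: the $G^\vee$-action acts only on the $V_{\lambda^\vee}$ factors while $\mathbb{C}[M^t_{\lambda^\vee}]$ serves as multiplicity space. For (ii), I would invoke the tropicalization functor $\mathcal{T}$ of Proposition \ref{prop;equivofcats}: a morphism $f\colon M_1\to M_2$ in $\mul_G$ yields $f^t\colon M_1^t\to M_2^t$ of affine tropical varieties respecting the $P^\vee_+$-fibration. Restricting to the fiber over $\lambda^\vee$ gives a map of sets $M^t_{1,\lambda^\vee}\to M^t_{2,\lambda^\vee}$; linearizing and tensoring with $\mathrm{Id}_{V_{\lambda^\vee}}$, then summing over $\lambda^\vee$, yields a $G^\vee$-equivariant map $\mathcal{V}(f)$, with functoriality inherited from $\mathcal{T}$.

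The heart of the proof is (iii). Starting from $M_1 \star M_2 = M_1 \times M_2 \times U$ and using compatibility of tropicalization with products of positive varieties with potential, I obtain $(M_1 \star M_2)^t \cong M_1^t \times M_2^t \times U^t$. The decisive step is to decompose the tropical fiber over $\lambda^\vee$: since the $H$-map on $M_1\star M_2$ combines the three individual ones,
\[
   (M_1\star M_2)^t_{\lambda^\vee} \;=\; \bigsqcup_{\mu^\vee,\nu^\vee\in P^\vee_+} M_{1,\mu^\vee}^t \times M_{2,\nu^\vee}^t \times U^t_{\mu^\vee,\nu^\vee,\lambda^\vee},
\]
where $U^t_{\mu^\vee,\nu^\vee,\lambda^\vee}$ is the subset of the tropicalized $U$ cut out by the weight constraints matching $\mu^\vee$, $\nu^\vee$ to $\lambda^\vee$. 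By the Berenstein--Zelevinsky tensor product multiplicity formula from \cite{BZ01}, the cardinality of this set equals $c^{\lambda^\vee}_{\mu^\vee \nu^\vee} = \dim \Hom_{G^\vee}(V_{\lambda^\vee}, V_{\mu^\vee}\otimes V_{\nu^\vee})$. Linearizing and summing over $\lambda^\vee$ produces the desired isomorphism; naturality in each slot is automatic since the decomposition is defined via the fiber structure and $\mathcal{T}$.

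The main obstacle I anticipate is verifying the pentagon coherence for the nontrivial associator of $\mul_G$ from Theorem \ref{Associator}. Under $\mathcal{V}$, both $(M_1\star M_2)\star M_3$ and $M_1\star (M_2\star M_3)$ tropicalize to the same underlying product $M_1^t\times M_2^t\times M_3^t\times U^t\times U^t$, whose fiber over $\lambda^\vee$ counts triple tensor product multiplicities $\dim \Hom_{G^\vee}(V_{\lambda^\vee}, V_{\mu_1^\vee}\otimes V_{\mu_2^\vee}\otimes V_{\mu_3^\vee})$. Thus both parenthesizations deliver the same $G^\vee$-module, and what must be shown is that the explicit tropical change-of-variables coming from the $\mul_G$-associator matches the canonical isomorphism of triple tensor products. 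I would handle this by working inside the category of decorated $U\times U$-bicrystals introduced in Section \ref{section proofs1}, where the associator is assembled from positive birational maps whose tropical avatars implement the Littlewood--Richardson bijections between the two parenthesizations; the coherence thus reduces to a compatibility already built into that framework.
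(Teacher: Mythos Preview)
Your approach is more hands-on than the paper's, and largely sound in spirit, but there is a logical circularity you should be aware of, and the paper's route is considerably more economical.

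\textbf{The paper's argument.} The paper does not verify the monoidal axioms for $\mathcal{V}$ directly. Instead it factors
\[
\mathcal{V} \;=\; \mathcal{B}\circ \mathcal{G},
\]
where $\mathcal{G}\colon \mul_G^+\to \mathbf{TriUB}_G^+$ sends $(M,\Phi_M)$ to the unipotent bicrystal $(M\times_H G,\;\Phi_M+\Phi_{BK})$, and $\mathcal{B}\colon \mathbf{UB}_G^+\to \mathbf{Mod}_{G^\vee}$ is the Berenstein--Kazhdan functor from \cite{BKII}. Monoidality of $\mathcal{G}$ is built into the equivalence of Theorem~\ref{Associator} (where the associator of $\mul_G$ is \emph{defined} so as to match the trivial one in $\mathbf{TriUB}_G$), and monoidality of $\mathcal{B}$ is the content of \cite[Theorem~6.15]{BKII}, quoted here as property~(1). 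Properties~(3) and~(4) then identify $\mathcal{B}(\mathcal{G}(M))$ with your formula for $\mathcal{V}(M)$. That is the entire proof.

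\textbf{Where your argument differs and where it leaks.} Your step~(iii) hinges on the identity
\[
\big| U^t_{\mu^\vee,\nu^\vee,\lambda^\vee}\big| \;=\; c^{\lambda^\vee}_{\mu^\vee\nu^\vee},
\]
which you attribute to \cite{BZ01}. But \cite{BZ01} expresses $c^{\lambda^\vee}_{\mu^\vee\nu^\vee}$ as lattice points in a polytope cut out by $\mathbf{i}$-trail inequalities; identifying that polytope with the tropical fiber of $(M^{(2)},\overline{\Delta}_2)$ over $(\mu^\vee,\nu^\vee,\lambda^\vee)$ is a separate computation (essentially Section~\ref{combforten} here, or the crystal-theoretic argument in \cite{BKII}). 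In the paper's own logic this identity is Theorem~\ref{multi-variety}, which is \emph{deduced from} Theorem~\ref{functor}, not an input to it. So as written your argument is circular within this paper.

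\textbf{On coherence.} Your final paragraph, where you retreat to the $U\times U$-bicrystal category to handle the pentagon, is exactly the mechanism the paper uses --- but the paper uses it from the outset for the \emph{entire} monoidal structure, not just coherence. Once you are willing to invoke the bicrystal framework and the properties of $\mathcal{B}$ from \cite{BKII}, there is no gain in re-deriving the product isomorphism by hand; the factorization $\mathcal{V}=\mathcal{B}\circ\mathcal{G}$ gives everything at once.
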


\begin{Rmk}
  It will be interesting to relate this result to geometric Satake, which asserts that the category of finite-dimensional $G^\vee$ modules is monoidal equivalent to the category of perverse sheaves on the affine Grassmannian $\Gr_G$ of $G$. Since our $G^\vee$-modules ${\mathcal V}(M)$ are  frequently $G^\vee$-module algebras (see discussion below), we expect  them, under geometric Satake, to give rise to interesting  perverse sheave on $\Gr_G$.
\end{Rmk}

Note, however, sometimes the module $\mathcal{V}(M)$ may have infinite multiplicities, in particular this happens for the module $\mathcal{V}(H\star H)=\mathcal{V}(H)\otimes V(H)$ because
\begin{equation}\label{intro:eq:1}
    \mathcal{V}(H)=\bigoplus_{\lambda^\vee\in P^\vee_+} V_{\lambda^\vee}.
\end{equation}
To address this issue, we decorate geometric multiplicities by additionally fibering them over another split algebraic torus. Also we amend the multiplication in $\mul_G$ in such a way that if $M_i$ is additionally fibered over $S_i$ for $i=1,2$, then $M_1\star M_2$ is additionally fibered over $S_1\times S_2 \times H$. This fixes the issue with $H\star H$ because now it is fibered over $H^{3}$. And the finiteness of the multiplicities is restored as follows. Given a geometric multiplicity $M$ additionally fibered over $S$, its tropicalization $M^t$ is naturally fibered over the direct product of $P^\vee_+$ and cocharacter lattice $X_*(S):=\Hom(\mathbb{G}_{\mathbf{m}}, S)$ so that for every cocharacter $\xi\in X_*(S)$ we define $\mathcal{V}_{\xi}(M)$ by
\[
  \mathcal{V}_{\xi}(M):=\bigoplus_{\lambda\in P^\vee_+} \mathbb{C}[M^t_{\lambda^\vee, \xi}]\otimes V_{\lambda^\vee}.
\]
\begin{Thm}[Theorem \ref{functor-compo}]
  Given geometric multiplicities $M_i$ additionally fibered over $S_i$ for $i=1,2$, one has the following natural isomorphism of $G^\vee$-modules
  \begin{equation}\label{eq:intro}
    \mathcal{V}_{\xi_1,\xi_2,\lambda^\vee, \nu^\vee}(M_1\star M_2) \cong I_{\lambda^\vee}\left(\mathcal{V}_{\xi_1}(M_1)\right)\otimes I_{\nu^\vee}\left(\mathcal{V}_{\xi_1}(M_2)\right),
  \end{equation}
  where $I_{\mu^\vee}(V)$ denotes the $\mu^\vee$-th isotypic component of a $G^\vee$-module $V$.
\end{Thm}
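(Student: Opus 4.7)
My plan is to deduce the refined identity from the monoidal-functor statement of Theorem \ref{functor} by tracking all the additional gradings induced by the $S_i$- and $H$-decorations. The guiding principle is that, by construction, the tropical fiber decomposition of a decorated $M^t$ along $P^\vee_+\times X_*(S)$ is \emph{exactly} the isotypic decomposition of $\mathcal{V}_\xi(M)$, so once the finer gradings on $(M_1\star M_2)^t$ are unpacked, the isotypic decomposition on the dual side will appear automatically.

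First I would unpack the decorations. Since $M_1\star M_2=M_1\times M_2\times U$ inherits the decoration $S_1\times S_2\times H$, its tropicalization is graded by $X_*(S_1)\times X_*(S_2)\times X_*(H)$ in addition to the dominant coweight grading coming from the $\mul_G$-structure. Hence $\mathcal{V}(M_1\star M_2)$ splits into pieces indexed by $(\xi_1,\xi_2,\lambda^\vee,\nu^\vee)$, where the last two indices parametrize the interaction of the extra $H$-factor with the $P^\vee_+$-grading.

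Next I would invoke Theorem \ref{functor}: it gives an isomorphism $\mathcal{V}(M_1\star M_2)\cong \mathcal{V}(M_1)\otimes \mathcal{V}(M_2)$ of $G^\vee$-modules, built from the tropical description of $M_1\star M_2$. Refining along the $S_i$-gradings yields
\[
\mathcal{V}(M_1\star M_2)\cong \bigoplus_{\xi_1,\xi_2}\mathcal{V}_{\xi_1}(M_1)\otimes \mathcal{V}_{\xi_2}(M_2),
\]
and decomposing each factor into isotypic components then produces the desired right-hand side after matching the $(\xi_1,\xi_2,\lambda^\vee,\nu^\vee)$-graded pieces on the two sides.

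The main obstacle is verifying that the extra $H$-grading on the left, geometrically arising from the $U$-factor together with the normalization action of $H$, corresponds precisely to the pair $(\lambda^\vee,\nu^\vee)$ of highest weights of the two isotypic factors on the right, rather than to, say, their sum or difference. This bookkeeping should be carried out via the tropical description of the $U\times U$-bicrystal structure from Section \ref{section proofs1}, combined with compatibility of the associator from Theorem \ref{Associator} with the finer grading; essentially one must identify the tropical fiber $(M_1\star M_2)^t_{\xi_1,\xi_2,\lambda^\vee,\nu^\vee}$ with the product $M_1^t_{\lambda^\vee,\xi_1}\times M_2^t_{\nu^\vee,\xi_2}$ enriched by the tropical $U$-contribution, which under the monoidal isomorphism becomes the multiplicity space $\mathbb{C}[M_1^t_{\lambda^\vee,\xi_1}]\otimes \mathbb{C}[M_2^t_{\nu^\vee,\xi_2}]$ tensored with $V_{\lambda^\vee}\otimes V_{\nu^\vee}$.
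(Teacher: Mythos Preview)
Your overall strategy is in the right direction, but the paper's execution is considerably more direct and avoids the obstacle you flag. Rather than doing any tropical bookkeeping, the paper identifies $\mathcal{V}_\xi({\bm M})\cong \mathcal{B}_\xi(\mathcal{G}({\bm M}))$, where $\mathcal{G}\colon \mul_G^+\to \mathbf{TriUB}_G^+$ sends ${\bm M}$ to $(M\times_H G,\Phi_M+\Phi_{BK})$ and $\mathcal{B}$ is the Berenstein--Kazhdan functor from positive unipotent bicrystals to $G^\vee$-modules (passing through geometric and Kashiwara crystals). The grading compatibility is then imported wholesale from two properties of $\mathcal{B}$ established in \cite{BKII}: that $\mathcal{B}_{\xi_1,\xi_2}({\bm X}_1*{\bm X}_2)\cong \mathcal{B}_{\xi_1}({\bm X}_1)\otimes \mathcal{B}_{\xi_2}({\bm X}_2)$ for any $U\times U$-invariant fibrations, and that $\mathcal{B}_{\lambda^\vee,\xi}({\bm X}_M)\cong \mathbb{C}[M^t_{\lambda^\vee,\xi}]\otimes \mathcal{B}_{\lambda^\vee}({\bm X})$. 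With these in hand the proof is two lines.

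Your proposed route has a circularity hazard. The tropical fiber does factor as
\[
(M_1\star M_2)^t_{\xi_1,\xi_2,\lambda^\vee,\nu^\vee,\mu^\vee}\;\cong\;(M_1)^t_{\xi_1,\lambda^\vee}\times (M_2)^t_{\xi_2,\nu^\vee}\times (M^{(2)})^t_{\lambda^\vee,\nu^\vee,\mu^\vee}
\]
essentially by the definition of $\star$ and the additivity of the potential. But converting the third factor into the multiplicity space of $V_{\lambda^\vee}\otimes V_{\nu^\vee}$, i.e.\ asserting that $\bigl|(M^{(2)})^t_{\lambda^\vee,\nu^\vee,\mu^\vee}\bigr|=c_{\lambda^\vee,\nu^\vee}^{\mu^\vee}$, is exactly Theorem~\ref{multi-variety}, which in the paper is \emph{deduced from} Theorem~\ref{functor-compo} by specializing to $M_1=M_2=H$. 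And invoking Theorem~\ref{functor} as a black box does not help: it asserts monoidality, not that the monoidal isomorphism respects the extra $S_1\times S_2\times H^2$-grading. That refinement is precisely the content the paper extracts from \cite{BKII} via $\mathcal{B}$. So your plan is salvageable, but the salvage is to do what the paper does: work through $\mathcal{B}$ and cite the graded compatibility from \cite{BKII}, rather than attempting a self-contained tropical verification.
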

This indeed fixes the ``infinite multiplicity" issue for $\mathcal{V}(H\star H)$ since \eqref{eq:intro} boils down to an isomorphism
\[
  \mathcal{V}_{\lambda^\vee, \nu^\vee}(H\star H) \cong V_{\lambda^\vee}\otimes V_{\nu^\vee}.
\]
In turn,  by applying this argument repeatedly to the geometric multiplicities: 
\[
H^{\star n}:=\underbrace{H\star\cdots \star H}_n,
\]
we get an isomorphism of $G^\vee$-modules
\[
  \mathcal{V}_{\lambda_1^\vee,\ldots, \lambda_n^\vee}(H^{\star n}) \cong V_{\lambda^\vee_1}\otimes\cdots \otimes V_{\lambda^\vee_n},\quad \forall \lambda_1^\vee,\ldots, \lambda_n\in (P_+^\vee)^n, ~n\geqslant 2.
\]
Thus the geometric multiplicities $H^{\star n}$ (fibered over $H^{n+1}$)  compute all tensor product multiplicities $c_{\lambda_1^\vee,\ldots, \lambda_n^\vee}^{\mu^\vee}:=\dim \Hom_{G^\vee}(V_{\mu^\vee}, V_{\lambda_1^\vee}\otimes\cdots \otimes V_{\lambda_n^\vee})$. 

By modifying  $H\star H$, we construct the geometric multiplicity $X_{G,L}$ in the category $\mul_L$ for any Levi subgroup $L$ of $G$, which computes the all reduction multiplicities of $V_{\lambda^\vee}$ restricted to $L^\vee\subset G^\vee$, see Section \ref{reduction multi} for details.

\begin{Rmk}
  Our construction of $H^{\star n}$ bears some resemblance with the approach by Goncharov-Shen in \cite{GS1, GS2}. In particular, they related their configuration space $\text{Conf}(\mathcal{A}^{n+1},\mathcal{B})$ to geometric crystals in \cite[Appendix B]{GS1} and \cite[Section 7.1]{GS2}. 
\end{Rmk}

Having in mind various deformations of tensor multiplicities (see \cite[Section 7.3]{BKII}) on the one hand, the unipotent bicrystal realization of $H^{\star n}$ on the other hand, we construct a large (albeit finite) set ${\bf C}_n={\bf C}_n(G)$ of positive functions  on $H^{\star n}$ (see Section \ref{hcc}), and sometimes refer to elements of ${\bf C}_n$ as {\em central charges}. 
For $n=2$, ${\bf C}_2=\{c_0,c_1,c_2\}$ so that $\Phi_{H\star H}=c_0+c_1+c_2$, which, viewed as a $U^3$-invariant rational function on $G\times G$ is given by 
$$\Phi_{BK}(g_1)+\Phi_{BK}(g_2)-\Phi_{BK}(g_1g_2), g_1,g_2\in G,$$
where $\Phi_{BK}$ is a $U\times U$-equivariant rational function on $G$ introduced by David Kazhdan and the first author in \cite{BKII}. The set ${\bf C}_3$ consists of $8$ functions, the potential $\Phi_{H\star H\star H}$ is the sum of $4$ of them (see Example \ref{ex:n=3}, also we compute ${\bf C}_n$ explicitly for $G=\GL_2$ in Example \ref{hccGL2}). 

Since ${\bf C}_n$  emerged naturally as a set of $U^{n+1}$-invariant functions on $G^n$, we expect the central charges to behave like integrals for some integral system. 

As a combinatorial application of positivity of central charges, we can deform the tensor product multiplicity $c_{\lambda_1^\vee,\ldots, \lambda_n^\vee}^{\mu^\vee}$ as follows:
\[
c_{\lambda_1^\vee,\ldots, \lambda_n^\vee}^{\mu^\vee;c}(q):=\sum_{a\in \left( H^{\star n} \right)_{\mu^\vee,\lambda_1^\vee,\ldots, \lambda_n^\vee}^t} q^{c^t(a)},
\]
for any central charge $c$ in ${\bf C}_n$ (or even in the positive semifield generated by ${\bf C}_n$, where $\left( H^{\star n} \right)_{\mu^\vee,\lambda_1^\vee,\ldots, \lambda_n^\vee}^t$ denotes the tropical fiber of $H^{\star n}$ and $c^t\colon \left(H^{\star n} \right)\to \mathbb{Z}$ the tropicalization of $c$.

We hope to relate these new polynomials for various choices of $c$ with the LLT-polynomials (see {\em e.g.}, \cite{LLT} ) and Kostka-Foulkes polynomials (see {\em e.g.}, \cite{KF}) which also deform relevant multiplicities.


Now we focus on our second goal, namely new combinatorial-geometric formulas for tensor product and 
and reduction (to Levi subgroups) multiplicities. 
To achieve this goal, we introduce the notion of {\em decorated} reduced word of any element $w\in W$, the Weyl group of $G$.
In particular, to each decorated reduced word $\mathbb{I}$ of the longest element $w_0$, we assign an open embedding  $x_{\mathbb{I}}$ from $(\mathbb{C}^\times)^{m}$ to $U$, where $m=\dim(U)$. To get new combinatorial-geometric formulas for $c_{\lambda^\vee,\nu^\vee}^{\mu^\vee}$, we introduce the notion of {\em decorated $\mathbb{I}$-trails}, which generalize the notion of $\mathbf{i}$-trails in \cite{BZ01}. 

Our main result in this direction is Theorem \ref{comb}, which asserts that  $c_{\lambda^\vee,\nu^\vee}^{\mu^\vee}$ is the number of lattice points in a polytope $P_{\lambda^\vee,\mu^\vee,\nu^\vee}^{\mathbb{I}}$ for any given decorated reduced word $\mathbb{I}$ for $w_0$, whose facets are computed in terms of generalized $\mathbb{I}$-trails.
Taking trivially decorated reduced words, this will recover main results of \cite{BZ01}
In Section \ref{EDC}, we estimate the number of nonequivalent  decorated words for $w_0$, thus estimating  the number of ``significantly different'' formulas for $c_{\lambda^\vee,\nu^\vee}^{\mu^\vee}$. 

Since $U$ is a cluster variety (see {\em e.g.}, \cite{BFZ}), we can treat our new open embeddings $x_{\mathbb{I}}$'s as certain toric coordinates on $U$, thus generalizing the chamber ansatz (\cite{BFZ96,BZ97, FZ}). Using a general (probably well-knwon) result about open embeddings of a split algebraic torus into the affine space (see Proposition \ref{inversemapapp}), we obtain a set ${\bf X}_{\mathbb{I}}$ of $m=\dim(U)$ irreducible polynomials on $U$, which suggests the following
\begin{Con}
    For any decorated reduced word for $w_0$, the set ${\bf X}_{\mathbb{I}}$ is a cluster for $U$ mutation-equivalent to an initial cluster attached to any reduced word ${\bf i}$ for $w_0$.
\end{Con}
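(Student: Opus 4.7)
The plan is to establish the conjecture by reducing it to the known cluster structure on $U$ attached to an ordinary reduced word $\mathbf{i}$ for $w_0$, and then interpreting the change between decorations as a sequence of explicit cluster mutations. First, I would treat the base case where $\mathbb{I}$ is the trivial decoration of a reduced word $\mathbf{i}$: here the open embedding $x_{\mathbb{I}}$ specializes to the classical factorization parametrization $(t_1,\dots,t_m)\mapsto x_{i_1}(t_1)\cdots x_{i_m}(t_m)$ of an open subset of $U$, and Proposition \ref{inversemapapp} extracts precisely the generalized-minor polynomials of the chamber ansatz of \cite{BFZ96,BZ97,FZ}. These form an initial cluster for the cluster algebra structure on $\mathbb{C}[U]$ established in \cite{BFZ}, providing the reference seed required by the conjecture.

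Next, I would set up an induction over the complexity of the decoration. Any decorated reduced word $\mathbb{I}$ should be reachable from the trivial decoration of some $\mathbf{i}$ by a finite sequence of elementary decoration moves at individual positions, possibly interleaved with braid moves on the underlying reduced word. For braid moves, the passage between the associated clusters is worked out in \cite{BFZ} as a single mutation in type $A_2$ and explicit finite mutation sequences in the other rank-two cases. For an elementary decoration change at one position, both $x_{\mathbb{I}}$ and the modified $x_{\mathbb{I}'}$ are open embeddings of $(\mathbb{C}^\times)^m$ into $U$, so the transition $x_{\mathbb{I}'}^{-1}\circ x_{\mathbb{I}}$ is a birational automorphism of the torus; positivity of all the factors entering $x_{\mathbb{I}}$ forces this transition to be subtraction-free, and Proposition \ref{inversemapapp} lets one read off how the polynomial set $\mathbf{X}_{\mathbb{I}}$ is modified. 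The task is to match this modification with the exchange relations of cluster mutations in the BFZ quiver attached to $\mathbf{i}$.

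The main obstacle is this matching step. A priori, a positive birational automorphism of a torus need not factor into cluster mutations, so one must extract specific combinatorial information from the decoration to identify which vertices of the BFZ quiver are being mutated and verify that the exchanged binomials coincide with those produced by Proposition \ref{inversemapapp}. A plausible route is to use the unipotent bicrystal realization of $H^{\star n}$ together with the $U\times U$-equivariance of the Berenstein--Kazhdan potential $\Phi_{BK}$ from \cite{BKII} to constrain the admissible transition maps; combined with an induction establishing irreducibility of the polynomials in $\mathbf{X}_{\mathbb{I}}$ (using the exchange relation to ensure each new cluster variable is a prime factor), this should close the argument.
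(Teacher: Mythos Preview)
The statement you are attempting to prove is labeled in the paper as a \emph{Conjecture}, not a theorem; the authors do not supply a proof and explicitly leave it open (they even record the analogous statement for general $w$ as a second conjecture in Section~\ref{Factorpro}). So there is no ``paper's own proof'' to compare against, and your proposal should be read as an attack on an open problem rather than a reconstruction of a known argument.

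As a strategy, your outline is reasonable in its broad shape but contains a genuine gap precisely where you yourself flag it. The base case is fine: for the trivial decoration the set $\mathbf{X}_{\mathbb{I}}$ is the chamber-ansatz cluster of \cite{BFZ96,FZ,BFZ}, and braid moves between undecorated words are already known to be mutation sequences. The difficulty is entirely in the decoration moves. You correctly observe that a subtraction-free birational torus automorphism need not be a composite of cluster mutations, so positivity alone cannot close the argument. Your proposed remedy --- using $U\times U$-equivariance of $\Phi_{BK}$ and the unipotent-bicrystal structure to ``constrain the admissible transition maps'' --- does not obviously interact with the cluster-algebra exchange relations at all: $\Phi_{BK}$ is a potential on $B^-$, not a datum of the BFZ quiver, and there is no mechanism in the paper linking $\chi^{\st}$-linearity to exchange binomials. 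Likewise, the irreducibility induction you sketch is circular: you appeal to the exchange relation to show the new variable is prime, but establishing that the transition \emph{is} an exchange relation is exactly what remains to be proved.

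A more promising line, if you want to pursue this, is to work directly with the explicit move $(\tau_4')$ of Lemma~\ref{App:Lem1} (the only move that is not already monomial by \cite[Proposition~7.2]{BZ01}) and compute the resulting change in $\mathbf{X}_{\mathbb{I}}$ by hand in rank two, then try to identify that single step with a specific mutation in the BFZ quiver. Until that identification is made concretely, the proposal remains a plausible program rather than a proof.
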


We generalize this to any $w\in W$ in Section \ref{Factorpro}.

All those multiplicities we just computed in many ways are particular case of the following algebro-geometric problem: given a $G^\vee$-variety $X^\vee$, compute the multiplicities in the coordinate algebra $\mathbb{C}[X^\vee]$ viewed as a $G^\vee$-module. Indeed, if $X^\vee=(G^\vee/U^\vee)^n$, then
\[
  \mathbb{C}[X^\vee]=\mathbb{C}[G^\vee/U^\vee]^{\otimes n}\cong \bigoplus_{\lambda_1^\vee,\cdots, \lambda_n^\vee\in P^\vee_+} V_{\lambda^\vee_1}\otimes\cdots \otimes V_{\lambda^\vee_n},
\]
which bring us back the solved tensor product multiplicities problem. Furthermore, if $X^\vee=G^\vee/U^\vee$ and $\mathbb{C}[X]$ viewed as an $H^\vee$-module, then
\[
  \mathbb{C}[X^\vee]=\bigoplus_{\beta^\vee\in P^\vee_+}  \mathbb{C}[X^\vee]_{\beta^\vee}
\]
where $\mathbb{C}[X^\vee]_{\beta^\vee}$ is the $\beta^\vee$-weight space of $\mathbb{C}[X^\vee]$ and 
\[
  \mathbb{C}[X^\vee]_{\beta^\vee}=\bigoplus_{\lambda^\vee\in P^\vee_+}V_{\lambda^\vee}(\beta^\vee).
\]
More generally, if $\widetilde{G}^\vee$ is any complex reductive group containing $G^\vee$ as a subgroup, and $\widetilde{X}^\vee=\widetilde{G}^\vee/\widetilde{U}^\vee$ and $\mathbb{C}[\widetilde{X}^\vee]$ viewed as a $G^\vee$-module, then
\[
  \mathbb{C}[\widetilde{X}^\vee]=\bigoplus_{\lambda^\vee\in P^\vee_+}  \mathbb{C}[\widetilde{X}^\vee]_{\lambda^\vee}\otimes V_{\lambda^\vee}
\]
where $\mathbb{C}[\widetilde{X}^\vee]_{\lambda^\vee}=\bigoplus_{\widetilde{\lambda}^\vee\in \widetilde{P}}\Hom_{G^\vee}(V_{\lambda^\vee},V_{\widetilde{\lambda}^\vee})$. To address these and other multiplicity problems we suggest following

\begin{Con}\label{conmirror}
  For any complex affine $G^\vee$ variety $X^\vee$, there is a momoid $A_{X^\vee}$ in $\mul_G$   such that
  \[
    \mathbb{C}[X^\vee]= \mathcal{V}(A_{X^\vee}).
  \]
  Moreover, the assignment $X\to A_{X^\vee}$ is functorial.
\end{Con}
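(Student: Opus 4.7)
The plan is to construct $A_{X^\vee}$ by reducing, step by step, an arbitrary affine $G^\vee$-variety to building blocks for which the excerpt already supplies a geometric model. The guiding principle is that, since $\mathcal{V}$ is monoidal by Theorem \ref{functor}, a monoid in $\mul_G$ maps to a $G^\vee$-module algebra, so what is required is a (functorial) essential inverse to $\mathcal{V}$ restricted to monoids.

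First I would fix the universal cases. The isomorphism \eqref{intro:eq:1} gives $\mathcal{V}(H) \cong \mathbb{C}[G^\vee/U^\vee]$; the algebra structure on the right pulls back to an associative morphism $H \star H \to H$ endowing $H$ with a monoid structure, so $A_{G^\vee/U^\vee} = H$. Theorem \ref{functor-compo} then yields $A_{(G^\vee/U^\vee)^n} = H^{\star n}$. Peter--Weyl realizes $\mathbb{C}[G^\vee]$ as the $U^\vee$-diagonal subalgebra of $\mathbb{C}[(G^\vee/U^\vee)^2]$, so $A_{G^\vee}$ should be a sub-monoid of $H \star H$ whose tropical fibers retain only the diagonal pairs $(\lambda^\vee,-w_0\lambda^\vee)$. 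More generally, for a reductive subgroup $K^\vee \subseteq G^\vee$ and the homogeneous space $X^\vee = G^\vee/K^\vee$, I would take appropriate $K^\vee$-invariants on one factor, extending the construction of the Levi multiplicity $X_{G,L}$ in Section \ref{reduction multi}.

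To pass from homogeneous spaces to an arbitrary affine $X^\vee$, embed $X^\vee$ $G^\vee$-equivariantly into some representation $V$, so that $\mathbb{C}[X^\vee] = S(V^*)/I$ for a $G^\vee$-stable ideal $I$. The symmetric algebra $S(V^*)$ is a colimit of tensor products of irreducibles, hence should be modelled by an ind-object built from the $H^{\star n}$; passage to the quotient by $I$ should correspond geometrically to restricting to a closed positive subvariety of this ind-object cut out by the tropicalizations of a generating set of $I$. Functoriality would then follow from the canonicity of each step: a $G^\vee$-equivariant morphism $X^\vee_1 \to X^\vee_2$ induces a $G^\vee$-algebra map $\mathbb{C}[X^\vee_2] \to \mathbb{C}[X^\vee_1]$, which by the construction above should come from a well-defined morphism $A_{X^\vee_1} \to A_{X^\vee_2}$ in $\mul_G$.

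The hard part is the last step: $\mul_G$ is not abelian and has no formal quotient operation, so realizing an arbitrary $G^\vee$-stable ideal on the geometric side is a genuine positivity constraint rather than a formal consequence of the setup. Even for a principal ideal generated by a single highest-weight vector, exhibiting a closed positive subvariety of $H^{\star n}$ whose tropical fiber over $\mu^\vee$ has the correct codimension is nontrivial; moreover, the monoid structure has to be compatible with the subvariety. For this reason I expect the realistic first target is a restricted version of the conjecture: identify a broad subclass of affine $G^\vee$-varieties (e.g.\ spherical varieties, or those admitting a Fock--Goncharov-type positive structure compatible with the $G^\vee$-action) for which $A_{X^\vee}$ can be written down canonically, establish functoriality on this subclass, and only then attempt the general statement.
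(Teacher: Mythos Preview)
The statement you are attempting to prove is Conjecture~\ref{conmirror}, and the paper does \emph{not} prove it. It is explicitly presented as an open conjecture; the surrounding text says only that ``the conjecture essentially asserts that the functor $A\mapsto {\mathcal V}(A)$ \ldots\ is `surjective''' and that the authors ``will provide more evidence for this conjecture in the near future.'' There is therefore no proof in the paper to compare your proposal against.

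Your outline is a reasonable heuristic strategy, and its first two paragraphs align with the evidence the paper itself offers: $A_{G^\vee/U^\vee}=H$ is precisely the example the paper cites after the conjecture (via the mirror discussion with $Z_H=G/U$), and the use of $H^{\star n}$ for products of base affine spaces is exactly what Theorems~\ref{functor} and~\ref{functor-compo} are designed to give. Your suggestion to handle homogeneous spaces $G^\vee/K^\vee$ by a Levi-type construction is in the spirit of Section~\ref{reduction multi}, though the paper only treats Levi subgroups, not arbitrary reductive $K^\vee$.

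Where your proposal ceases to be a proof and becomes a wish list is exactly where you say it does: the passage from $S(V^*)$ to $S(V^*)/I$. The category $\mul_G$ has no cokernels, and ``restricting to a closed positive subvariety cut out by tropicalizations of generators of $I$'' is not an operation that has been shown to exist, let alone to produce a monoid or to be functorial. You correctly flag this, but flagging a gap is not closing it. In particular, even the construction of $A_{G^\vee}$ as a sub-monoid of $H\star H$ with the prescribed tropical fibers is not carried out in the paper and would already be new content. Your final paragraph, proposing to retreat to spherical varieties or varieties with a compatible positive structure, is sensible as a research program but confirms that what you have written is a strategy for attacking an open problem, not a proof.
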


In fact, this functor should be the ``inverse'' of our functor ${\mathcal V}$ restricted to monoids $A$ in $\mul_G$. The conjecture essentially asserts that the functor $A\mapsto {\mathcal V}(A)$ which sends monoids in $\mul_G$ to $G^\vee$-algebras is ``surjective'', {\em i.e.}, the coordinate algebras of all $G^\vee$-modules are in its range. We will provide more evidence for this conjecture in the near future. 

We conclude with the possible relation of this conjecture with mirror symmetry based on \cite{LT}. Namely, we claim that for any monoid $A\in \mul_G$, the variety $Z_A:=A\times_H G/U$ has a natural Landau-Ginzburg potential $\Phi_{Z_A}$. We expect that in the notation of conjecture \ref{conmirror} the variety $Z_A$ with potential $\Phi_{Z_A}$ is a mirror of $X^\vee$ for $A=A_{X^\vee}$. In particular, since $H\star H$ fibered over $H$ (without additional fibration), $H$ is an algebra in $\mul_G$. Taking $X^\vee=G^\vee/U^\vee$, so that $A_{X^\vee}=H$, we see that $Z_H=G/U$ is the mirror of $G^\vee/U^\vee$ according to \cite{LT}. This would give yet an other relation between the Langlands duality and mirror symmetry.

The paper is organized as follows: Section \ref{posvar&afftop} provides background on positivity theory and tropical affine varieties. In Section \ref{Doubleasposvar}, we introduce new toric charts on double Bruhat cells coming from double Bruhat cell embeddings. Section \ref{Geometric Multiplicities} presents the category of geometric multiplicities and our main results on geometric lift. Section \ref{tensor mul} shows how to pass from geometric multiplicities to tensor multiplicities. Section \ref{combforten} presents the combinatorial expressions using the new toric chart we get in Section \ref{Doubleasposvar}. Section \ref{reduction multi} is about the geometric lift of reduction multiplicities. The remaining sections contain the proofs of all the results in this paper.

{\bf Acknowledgements.} We are grateful to A. Alekseev, B. Hoffman and J. Lane for their useful comments and discussions. 
We also want express our gratitude for the hospitality and support of the Simons Center for Geometry and Physics, where this project has started during our visit in 2018. 

\section{Affine Tropical Varieties and Positive Varieties}\label{posvar&afftop}

\subsection{Affine tropical varieties}

In this section, we will introduce the notion of {\em affine tropical variety}, which is an analog of affine variety in the `tropical word'. For any subring $R$ of $\mathbb{R}$, denote by $R_+:=R\cap \mathbb{R}_+$ the semi-subring. Given subsets $C$ and $D$ of free $R$-modus $V$ and $V'$ respectively, a map $\phi\colon C\to D$ is {\em piecewise $R$-linear} if there is a piecewise $R$-linear $R$-module homomorphism $\widetilde{\phi}\colon V\to V'$ such that $\widetilde{\phi}\big|_C=\phi$.

\begin{Def}\label{afftrop}
  Fix a subring $R$ of $\mathbb{R}$. A $m$-dimensional {\em affine tropical variety} $\mathcal{C}$ over $R$ is a family of pairs $(C_{\sbt}, j_{\sbt})$ with index set $\Theta$ and a $m$-dimensional free $\mathbb{R}$-module $V$,  where for any $\theta\in \Theta$, $C_{\theta}$ is a set with an $R_+$-action and $j_{\theta}\colon C_{\theta} \to V$ is an injective map, which is called a \emph{tropical chart}, such that
  \begin{itemize}
    \item[(i)] The map $j_{\theta}$ commutes with the $R_+$-action;

    \item[(ii)] The transition map $j_{\theta}^{-1}\circ j_{\theta'}\colon C_\theta\to C_{\theta'}$ is a piecewise $R$-linear bijection.
   \end{itemize}
  We say that an affine tropical variety $\mathcal{C}$ is {\em convex} if there exist a $\theta\in \Theta$ such that the subset $j_{\theta}(C_{\theta})$ of $V$ is a $R_+$-submodule of the free module $V$. The {\em coordinate algebra} $R[\mathcal{C}]$ of $\mathcal{C}$ is the pull-back of piecewise $R$-linear functions of $V$ along the injective map $j_\theta$. Since $j_{\theta}^{-1}\circ j_{\theta'}$ is piecewise $R$-linear, the coordinate algebra $R[\mathcal{C}]$ is independent of charts.
\end{Def}

Similarly to algebraic varieties, for an affine tropical variety ${\mathcal C}$ over $R$, tensoring with $R'\supset R$, one obtains its $R'$-points
\[
   {\mathcal C}(R'):=\{(C_\theta\underset{R_+}{\otimes} R'_+, j_{\theta})\mid \theta\in \Theta\}.
\]
A morphism ${\bm f}$ of affine tropical varieties $\mathcal{C}=(C_{\sbt}, j_{\sbt})$ and $\mathcal{D}=(D_{\sbt}, k_{\sbt})$ over $R$ is a family of piecewise $R$-equivariant maps $f_{\sbt}$ with index set $(\Theta_{\mathcal{C}}, \Theta_{\mathcal{D}})$, such that the following diagram is commutative.
\begin{equation}\label{comm}
  \begin{tikzcd}
    C_\theta \arrow[r, " j_{\theta}^{-1}\circ j_{\theta'}"] \arrow[d, "f_{\theta, \vartheta}"'] &[1.3em] C_{\theta'} \arrow[d, "f_{\theta', \vartheta'}"] \\[1em]
    D_\vartheta \arrow[r, "k_{\vartheta}^{-1}\circ k_{\vartheta'}"] & D_{\vartheta'}
  \end{tikzcd}
\end{equation}
Affine tropical varieties over $R$ form a category $\mathbf{AffTropVar}(R)$.

\begin{Def}\label{finite}
  Let ${\bm f}$ be a morphism of affine tropical varieties $\mathcal{C}=(C_{\sbt}, j_{\sbt})$ and $\mathcal{D}=(D_{\sbt}, k_{\sbt})$ over $\mathbb{Z}$. For $\xi\in D_{\vartheta}$, define the {\em multiplicity} of $\xi$ over ${\bm f}$ as 
  \[
    \dim \mathbb{C}[f^{-1}_{\theta,\vartheta}(\xi)],
  \]
  where $\mathbb{C}[X]$ is the linearization of the set $X$. Note that the multiplicity of $\xi$ over ${\bm f}$ doesn't depend on the charts since by \eqref{comm}, we have:
  \[
    \dim \mathbb{C}[f^{-1}_{\theta,\vartheta}(\xi)]=\dim \mathbb{C}[f^{-1}_{\theta',\vartheta'}(k_{\vartheta}^{-1}\circ k_{\vartheta'}(\xi))].
  \]
  The morphism ${\bm f}$ is {\em finite} if every $\xi\in D_{\vartheta}$ has finite multiplicity.
\end{Def}

\begin{Ex}
  Define an affine tropical variety over $R$ as $\mathcal{I}_R:=\{(R_+, j\colon R_+\hookrightarrow R)\}$, which we refer to as the {\em trivial} affine tropical variety over $R$. Given affine tropical varieties $\mathcal{C}=(C_{\sbt}, j_{\sbt})$ and $\mathcal{D}=(D_{\sbt}, k_{\sbt})$ over $R$, the product
  \[
    \mathcal{C}\times \mathcal{D}:=\big\{(C_{\theta}\times D_{\vartheta}, j_{\theta}\times k_{\vartheta})\ \big|\ (\theta,\vartheta)\in \Theta_\mathcal{C}\times \Theta_\mathcal{D}\big\}
  \]
  is an affine tropical variety over $R$. For a morphism  ${\bm f}$ of $\mathcal{C}$ and $\mathcal{D}$, one can show that both ${\bm f}(\mathcal{C})$ and ${\bm f}^{-1}(\mathcal{D})$ are affine tropical varieties.
\end{Ex}

\subsection{Positive varieties and tropicalization}

In this section, we first briefly recall basic definitions in positivity theory and then realize tropicalization as a functor from the category of positive varieties with potential to the category of affine tropical varieties.

Consider a split algebraic torus $S\cong \mathbb{G}_{\bf{m}}^n$. Denote the character lattice of $S$ by $S_t=\Hom(S, \mathbb{G}_{\bf{m}})$ and the cocharacter lattice by $S^t = \Hom(\mathbb{G}_{\bf{m}}, S)$. The lattices $S_t$ and $S^t$ are naturally in duality and denote by $\langle \cdot, \cdot \rangle\colon S_t  \times S^t \to \mathbb{Z}$ this canonical pairing. The coordinate algebra $\mathbb{Q}[S]$ is the group algebra (over $\mathbb{Q}$) of the lattice $S_t$, that is, each $f \in \mathbb{Q}[S]$ can be written as
\begin{equation}\label{chi}
  f= \sum_{\chi \in S_t} c_\chi \chi,
\end{equation}
where only a finite number of coefficients $c_\chi$ are non-zero. Following \cite{BKII}, to each positive rational map $\phi\colon S\to S'$, we associate a \emph{tropicalized map} $\Phi^t\colon S^t\to (S')^t$ in the following way:

{\em Case 1.} If $\phi$ is positive regular on $S$, \emph{i.e.}, $\phi$ can be written as \eqref{chi} with all $c_\chi \geqslant 0$, define $\phi^t$ by
\[
  \phi^t\colon S^t \to \mathbb{G}_{\bf{m}}^t = \mathbb{Z}\ :\ \xi\mapsto \min_{\chi; \, c_\chi >0} \langle \chi, \xi \rangle.
\]

{\em Case 2.} If $\phi$ is positive rational on $S$, \emph{i.e.}, $\phi=f/g$ with $f,g$ positive regular functions, then 
\[
  \phi^t:=f^t - g^t.
\]

{\em Case 3.} For $\phi\colon S \to S'$ a positive rational map, define $\phi^t\colon S^t \to (S')^t$ as the unique map such that for every character $\chi \in S'_t$ and for every cocharacter $\xi \in S^t$ we have
\[
  \langle \chi, \phi^t(\xi) \rangle = (\chi \circ \phi)^t(\xi).
\]
A more concrete description is as follows. Let $\phi_1,\dots,\phi_m$ be the components of $\phi$ given by the splitting $S'\cong \mathbb{G}_{\bf{m}}^m$. Then, in the induced coordinates on $(S')^t,$ we have
\[
  \phi^t=(\phi_1^t,\dots,\phi_m^t).
\]

\begin{Def}
  Let $(X,\Phi)$ be an irreducible variety over $\mathbb{Q}$ with a rational function $\Phi$ on $X$. A \emph{rational chart} of $X$ is a birational isomorphism $\theta\colon S \to X$ from a split algebraic torus $S$ to $X$. A rational chart $\theta$ is {\em toric} if $\theta$ is an open embedding. A chart $\theta\colon S\to X$ is {\em positive} with respect to $\Phi$ if $\Phi\circ \theta$ is a positive rational function on $S$. Two charts $\theta_1\colon S_1\to X$ and $\theta_2\colon S_2\to X$ are called \emph{positively equivalent} if $\theta_1^{-1}\circ\theta_2\colon S_2 \to S_1$ and $\theta_2^{-1}\circ\theta_1\colon S_1\to S_2$ are positive rational maps. A \emph{positive variety with potential} is a triple $(X,\Phi, \Theta_X)$, where $\Theta_X$ is a set of positive equivalent charts who are positive with respect to $\Phi$. Given a positive chart $\theta\colon S\to X$ of $(X,\Phi, \Theta_X)$, denote by
  \[
    (X,\Phi, \theta)^t :=\left\{\xi\in \Hom(\mathbb{G}_{\bf{m}},S) \mid \Phi^t(\xi)\geqslant 0\right\},
  \]
  the tropicalization of $(X,\Phi, \theta)$. For convenience, we define $0^t:=+\infty$. If $\theta$ is toric and $\Phi$ is regular, the set $(X,\Phi,\theta)^t$ is a convex cone in $\Hom(\mathbb{G}_{\bf{m}},S)$.
\end{Def}

A morphism $f\colon (X,\Phi,\Theta_X)\to (Y,\Phi',\Theta_Y)$ of positive varieties with potential is a rational map $f\colon X\to Y$ such that the rational function $\Phi-f^*\Phi'$ is positive, and for some (equivalently any) $\theta_X\in \Theta_X$ and $\theta_Y\in \Theta_Y$, the rational map $\theta_Y^{-1} \circ f\circ \theta_X\colon S\to S'$ is positive.
 
Denote by $\mathbf{PosVarPot}(\mathbb{Q})$ the category of the positive varieties with potential over $\mathbb{Q}$.

\begin{Pro}\label{prop;equivofcats}
  Let $(X,\Phi, \Theta)$ be a positive variety with potential. Fix a splitting of $S$. Define $X^t_{\Phi}$ as
  \[
    X^t_{\Phi}:=\Big\{X_\theta:=(X,\theta,\Phi)^t,j_{\theta}\colon X_\theta\hookrightarrow \Hom(\mathbb{G}_{\bf{m}},S)\xrightarrow{\sim} \mathbb{Z}^m \ \big| \ \theta \in \Theta  \Big\}.
  \]
  Then $X^t_{\Phi}$ is an affine tropical variety over $\mathbb{Z}$. If $X$ has a positive toric chart with $\Phi$ regular, the affine tropical variety $X^t_{\Phi}$ is convex. In summary, tropicalization defines a functor from $\mathbf{PosVarPot}(\mathbb{Q})$ to $\mathbf{AffTropVar}(\mathbb{Z})$.
\end{Pro}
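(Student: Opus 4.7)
The plan is to verify the axioms of an affine tropical variety for the family $X^t_{\Phi}$, check convexity under the stated hypothesis, and then promote tropicalization to a functor by exhibiting its action on morphisms. Throughout, the heavy lifting is done by the fact that positive rational maps tropicalize to piecewise $\mathbb{Z}$-linear maps, and that tropicalization is compatible with composition, both of which are consequences of the three-case definition of $\phi^t$ recalled above.

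First, I would verify axioms (i) and (ii) of Definition \ref{afftrop}. Each $j_\theta$ is by construction the inclusion $X_\theta\hookrightarrow\Hom(\mathbb{G}_{\bf m},S)\xrightarrow{\sim}\mathbb{Z}^m$, so it is injective; the $\mathbb{Z}_+$-action on cocharacters is scaling, and since $\Phi^t\colon\Hom(\mathbb{G}_{\bf m},S)\to\mathbb{Z}$ is piecewise $\mathbb{Z}$-linear (hence positively homogeneous of degree $1$), the set $X_\theta=\{\xi\mid \Phi^t(\xi)\geqslant 0\}$ is $\mathbb{Z}_+$-stable and $j_\theta$ is $\mathbb{Z}_+$-equivariant. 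For the transition maps, note that for any two $\theta,\theta'\in\Theta_X$ the map $\theta^{-1}\circ\theta'\colon S'\to S$ is positive rational (by positive equivalence of charts), and its tropicalization $(\theta^{-1}\circ\theta')^t\colon (S')^t\to S^t$ coincides, by functoriality of tropicalization under composition, with $j_\theta^{-1}\circ j_{\theta'}$. Hence the transition is piecewise $\mathbb{Z}$-linear; bijectivity follows from applying the same reasoning to $(\theta')^{-1}\circ\theta$, which furnishes the inverse.

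Next, I would establish convexity: if $\theta$ is toric and $\Phi$ regular, then $\Phi\circ\theta$ is a positive Laurent polynomial $\sum_\chi c_\chi\chi$ with $c_\chi\geqslant 0$, so by Case~1 we have $\Phi^t(\xi)=\min_{\chi:\,c_\chi>0}\langle\chi,\xi\rangle$. Thus
\[
  X_\theta=\bigcap_{\chi:\,c_\chi>0}\{\xi\in\mathbb{Z}^m\mid \langle\chi,\xi\rangle\geqslant 0\}
\]
is a finite intersection of $\mathbb{Z}_+$-stable half-spaces, which exhibits $j_\theta(X_\theta)$ as an $\mathbb{Z}_+$-submodule of $\mathbb{Z}^m$, establishing convexity.

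Finally, for functoriality, given a morphism $f\colon(X,\Phi,\Theta_X)\to(Y,\Phi',\Theta_Y)$, I would define the associated morphism $\mathbf{f}=(f_{\theta,\vartheta})$ of affine tropical varieties by setting $f_{\theta,\vartheta}:=(k_{\vartheta}^{-1}\circ f\circ\theta)^t$ restricted to $X_\theta$. Three checks remain: that $f_{\theta,\vartheta}$ maps $X_\theta$ into $Y_\vartheta$, that diagram \eqref{comm} commutes, and that composition and identities are preserved. The first follows from positivity of $\Phi-f^*\Phi'$: for any $\xi\in X_\theta$, tropicalization of a sum of positive rational functions is the min, so $(\Phi')^t(f_{\theta,\vartheta}(\xi))=(f^*\Phi')^t(\xi)\geqslant\Phi^t(\xi)\geqslant 0$. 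Commutativity of \eqref{comm} and preservation of composition/identities are immediate from the fact that $(\phi\circ\psi)^t=\phi^t\circ\psi^t$ and $\id^t=\id$ on the nose in the positive rational category.

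The proof is almost entirely a bookkeeping exercise given the preceding machinery, and I do not expect any serious obstacle; the only subtle point is the interchange between ``positive rational'' and ``piecewise $\mathbb{Z}$-linear'' on transition maps, which needs to be carried out coherently so that restriction to the cones $X_\theta$ produces well-defined bijections rather than just germs of piecewise linear maps. Once this is handled, convexity and functoriality fall out immediately.
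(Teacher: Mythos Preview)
The paper states this proposition without proof, so there is nothing to compare against; your argument is the natural verification and is essentially what one would expect.

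There is one small gap in the convexity step. You assert that when $\theta$ is toric and $\Phi$ is regular, the pullback $\Phi\circ\theta$ is a \emph{positive} Laurent polynomial, so that Case~1 applies directly. But regularity of $\Phi$ together with $\theta$ being an open embedding only guarantees that $\Phi\circ\theta$ is a Laurent polynomial which, as a positive rational function, can be written as $f/g$ with $f,g$ positive; its own coefficients may have mixed signs (for instance $t_1^2-t_1t_2+t_2^2=(t_1^3+t_2^3)/(t_1+t_2)$). To close the gap, one argues via Newton polytopes: writing $f=(\Phi\circ\theta)\cdot g$, the vertices of $N(f)=N(\Phi\circ\theta)+N(g)$ survive without cancellation, so
\[
  f^t(\xi)=\min_{\chi\in N(\Phi\circ\theta)}\langle\chi,\xi\rangle+\min_{\chi\in N(g)}\langle\chi,\xi\rangle,
\]
whence $\Phi^t=f^t-g^t$ is still a minimum of finitely many linear forms and $\{\Phi^t\geqslant 0\}$ is a polyhedral cone. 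With this patch, the rest of your argument (transition maps via $(\theta^{-1}\circ\theta')^t$, the check that $f_{\theta,\vartheta}$ lands in $Y_\vartheta$ using positivity of $\Phi-f^*\Phi'$, and functoriality from $(\phi\circ\psi)^t=\phi^t\circ\psi^t$) goes through as written.
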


Note that $(\mathbb{Q}, \id, \theta\colon \mathbb{Q}_{\bf m}\to \mathbb{Q})$ is a positive variety with potential and $\mathbb{Q}_{\id}^{t}=\mathcal{I}_{\mathbb{Z}}$. Thus potential $\Phi_X$ on $(X, \Theta)$ can be viewed as a morphism of positive varieties with potential in the following way:
\[
  \Phi_X\colon (X,\Phi_X,\Theta) \to (\mathbb{Q}, \id, , j\colon \mathbb{Q}_{\bf m}\to \mathbb{Q}).
\]

Let $f$ be a morphism of two positive varieties with potential $(X,\Phi_X, \Theta_X)$ and $(Y,\Phi_Y,\Theta_Y)$. Denote by
\[
  f^{-t}_{\theta_X,\theta_Y}(\xi):=(f^t_{\theta_X,\theta_Y})^{-1}(\xi)\subset (X,\Phi_X,\theta_X)^t
\]
the pre-image of $\xi\in (Y,\Phi_Y,\theta_Y)^t$ of the tropical function $f^t_{\theta_X,\theta_Y}\colon (X,\Phi_X,\theta_X)^t\to (Y,\Phi_Y,\theta_Y)^t$. We sometimes write $f^{-t}(\xi)$ instead if the positive chart we choose is clear from the context. Note that $f^{-t}(\xi)$ is not an affine tropical variety in general.

\section{Double Bruhat Cells as Positive Varieties}\label{Doubleasposvar}

In this section, we will introduce the so called  Berenstein-Kazhdan potential $\Phi_{BK}$ on a Borel subgroup $B^-$ of a semi-simple algebraic group $G$, and then introduce a positive structure for $(B^-, \Phi_{BK})$. 

\subsection{Double Bruhat cell embeddings}
First of all, let us fix the notation. For a semi-simple algebraic group $G$, choose a Cartan subgroup $H$, and a pair of opposite Borel subgroups $B$ and $B^-$ such that $B\cap B^-= H$. Denote by $U$ and $U^-$ respectively the unipotent radicals of $B$ and $B^-$. Let $r=\rk(\Lie{G})$ and $I$ be the set of vertices of Dykin diagram of $G$. Denote by $e_i,\alpha_i^\vee, f_{i}$ the  Chevalley generators of $\mathfrak{g}$ for $i\in I$, and by $\alpha_i$ the corresponding simple root of $\alpha_i^\vee$. For each $i\in I$, let $\varphi_i \colon \SL_2\to G$ be the canonical homomorphism corresponding to the simple root $\alpha_i$; set
\[
  x_i(t)=\varphi_i\begin{bmatrix}
    1 & t \\
    0 & 1
  \end{bmatrix}\in U,\quad
  y_{i}(t)=\varphi_i\begin{bmatrix}
    1 & 0 \\
    t & 1
  \end{bmatrix}\in U^{-}, \quad
  t^{\alpha_i^\vee}=\varphi_i\begin{bmatrix}
    t & 0\\
    0 & t^{-1}
  \end{bmatrix}\in H.
\]
Moreover, define
\[
  x_{-i}(t)=y_i(t)t^{-\alpha_i^\vee}=\varphi_i\begin{bmatrix}
    t^{-1} & 0 \\
    1 & t
  \end{bmatrix}\in B^-.
\]
For each $i\in I$, define the elementary (additive) character  $\chi_i$ of $U$ by
\[
  \chi_i(x_j(t))=\delta_{ij}\cdot t,\quad \text{~for~} t\in \mathbb{G}_{\bf{m}}. 
\]
Denote by $\chi^{\st}=\sum \chi_i$ the standard character of $U$. Define an anti-automorphism $(\cdot)^T\colon G\to G$ by:
\[
  x_i^T(t)=y_{i}(t),\quad h^T=h, \quad \text{~for~} t\in \mathbb{G}_{\bf{a}} \text{~and~} h\in H;
\]
and define  the `positive inverse', which is an anti-automorphism $(\cdot)^\iota\colon G\to G$ by:
 \begin{equation}\label{eq:iota}
     (x_i(t))^\iota=x_i(t),\quad (y_i(t))^\iota=y_i(t), \quad h^\iota=h^{-1}, \text{~for~} t\in \mathbb{G}_{\bf{a}} \text{~and~} h\in H.
 \end{equation}
Let $W=N_G(H)/H$ be the Weyl group of $G$, and $\overline{s_i}=x_i(-1)y_i(1)x_i(-1)$ be a lift of the reflection $s_i$ generated by simple root $\alpha_i$. Note that $\overline{s_i}$'s satisfy the braid relation. Therefore we associated to each $w\in W$ a lift $\overline{w}:=\overline{s_{i_1}}\cdots \overline{s_{i_l}}$ where $w=s_{i_1}\cdots s_{i_l}$ is a reduced expression of $w$. Let $w_0$ be the longest element in $W$ with $\ell(w_0)=m$. Let $G_0 = U^-HU$ be the set of elements that admit Gaussian decomposition; write $x=[x]_-[x]_0[x]_+$ for $x\in G_0$ with respect to the Gaussian decomposition.

Denote by $L^{v,w}=U\overline{v}U\cap B^{-}wB^{-}$ a {\em reduced} double Bruhat cell for $(v,w)\in W\times W$. 
\begin{Thm}\label{open}
  Let $v,w\in W$ such that $\ell(v)+\ell(w)=\ell(v^{-1}w)$, the following maps
  \[
  	\xi^{v,w}\colon L^{v,w}\hookrightarrow L^{e,v^{-1}w}\ :\ x\to [\overline{v}^{-1}x]_+; \quad \xi_{v,w}\colon L^{w^{-1},v^{-1}}\hookrightarrow L^{w^{-1}v,e}\ :\ x\to [x\overline{v}]_-[x\overline{v}]_0
  \]
  are open embeddings.
\end{Thm}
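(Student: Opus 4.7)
I will focus on $\xi^{v,w}$; the second assertion for $\xi_{v,w}$ follows by applying a suitable anti-automorphism of $G$ (a combination of the inversion $g\mapsto g^{-1}$ with the transpose $(\cdot)^{T}$), which exchanges left and right multiplication, swaps the operations $[\cdot]_{+}$ and $[\cdot]_{-}[\cdot]_{0}$, and interchanges the cells $L^{v,w}\leftrightarrow L^{w^{-1},v^{-1}}$. My plan for $\xi^{v,w}$ is to establish three things: (a) it is a well-defined rational morphism; (b) its image lies in $L^{e,v^{-1}w}$; (c) it admits a rational inverse $\eta$, equivalently is generically injective. Since both varieties are irreducible of the same dimension $\ell(v)+\ell(w)=\ell(v^{-1}w)$, combining (a)--(c) with normality of the target yields the open-embedding claim via Zariski's Main Theorem.

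For (a), writing $x=u_{1}\overline{v}u_{2}$ and factoring $u_{1}=u_{1}^{+}u_{1}^{-}$ according to the Levi-type decomposition $U=(U\cap\overline{v}U\overline{v}^{-1})\cdot(U\cap\overline{v}U^{-}\overline{v}^{-1})$, one computes
\[
\overline{v}^{-1}x=\bigl(\overline{v}^{-1}u_{1}^{+}\overline{v}\bigr)\bigl(\overline{v}^{-1}u_{1}^{-}\overline{v}\bigr)u_{2}=a\,b\,u_{2},\qquad a\in U,\;b\in U^{-}.
\]
Generically $ab\in G_{0}$, so $[\overline{v}^{-1}x]_{+}=[ab]_{+}\,u_{2}\in U$ is well-defined. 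For (b), the length-additivity $\ell(v^{-1})+\ell(w)=\ell(v^{-1}w)$ combined with the Bruhat product identity $B^{-}v^{-1}B^{-}\cdot B^{-}wB^{-}=B^{-}v^{-1}wB^{-}$, together with $\overline{v}^{-1}\in B^{-}v^{-1}B^{-}$, shows $\overline{v}^{-1}x\in B^{-}v^{-1}wB^{-}$; left-multiplying by $([\overline{v}^{-1}x]_{-}[\overline{v}^{-1}x]_{0})^{-1}\in B^{-}$ preserves this cell, whence $[\overline{v}^{-1}x]_{+}\in U\cap B^{-}v^{-1}wB^{-}=L^{e,v^{-1}w}$.

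The main obstacle is step (c), producing the inverse. The natural ansatz is $\eta(y)=\overline{v}\,b(y)\,y$ for a uniquely determined $b(y)\in B^{-}$. Rewriting $\eta(y)=\bigl(\overline{v}b(y)\overline{v}^{-1}\bigr)\cdot\overline{v}y$ and using the factorization
\[
\overline{v}B^{-}\overline{v}^{-1}=H\cdot(U\cap\overline{v}U^{-}\overline{v}^{-1})\cdot(U^{-}\cap\overline{v}U^{-}\overline{v}^{-1}),
\]
the condition $\eta(y)\in U\overline{v}U$ forces the $H$-component and the $(U^{-}\cap\overline{v}U^{-}\overline{v}^{-1})$-component of $\overline{v}b(y)\overline{v}^{-1}$ to be explicit rational functions of $y$, while the remaining $(U\cap\overline{v}U^{-}\overline{v}^{-1})$-component (of dimension $\ell(v)$) is pinned down by the further requirement $\eta(y)\in B^{-}wB^{-}$. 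The length-additivity $\ell(v)+\ell(w)=\ell(v^{-1}w)$ is precisely what makes this second condition determinate, so that a unique rational solution $b(y)$ exists; verifying this carefully is the crux of the proof. Once $\eta$ is in hand, the identity $\xi^{v,w}\circ\eta=\mathrm{id}$ is immediate because $\overline{v}^{-1}\eta(y)=b(y)\,y$ is already in Gaussian form with $U$-part equal to $y$, completing the proof.
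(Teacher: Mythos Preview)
Your steps (a) and (b) are essentially the paper's, though you make (a) harder than necessary: with the opposite factorization order $U=(U\cap\overline{v}U^{-}\overline{v}^{-1})\cdot(U\cap\overline{v}U\overline{v}^{-1})$ one gets $\overline{v}^{-1}U\overline{v}\subset U^{-}\cdot U\subset B^{-}U$, so $[\overline{v}^{-1}x]_{+}$ is defined \emph{everywhere} on $L^{v,w}$, not just generically. This matters, since Zariski's Main Theorem needs a regular morphism, not a rational map.

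The real gap is step (c). You correctly identify that pinning down the $(U\cap\overline{v}U^{-}\overline{v}^{-1})$-component of $b(y)$ via the condition $\eta(y)\in B^{-}wB^{-}$ is ``the crux of the proof,'' but you do not carry it out, and it is not obvious how length-additivity alone forces a unique rational solution. The paper sidesteps this entirely. First, injectivity of $\xi^{v,w}$ is immediate from the uniqueness of the Gaussian decomposition of $\overline{v}^{-1}x$ (you essentially observe this yourself in your last line). Second, rather than constructing a global inverse, the paper restricts $\xi^{v,w}$ to the open dense subset $L^{v,e}\cdot L^{e,w}\subset L^{v,w}$ and computes, for $x=x_{-}x_{+}$,
\[
\xi^{v,w}(x_{-}x_{+})=[\overline{v}^{-1}x_{-}]_{+}\cdot x_{+}=\bigl(\eta^{v}(x_{-})\bigr)^{\iota}\cdot x_{+},
\]
where $\eta^{v}\colon L^{v,e}\to L^{e,v^{-1}}$, $x\mapsto[\overline{v}^{-1}x]_{+}^{\iota}$, is the biregular isomorphism already established in \cite[Theorem~4.7]{BZ01}. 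Thus $\xi^{v,w}$ carries the open subset $L^{v,e}\cdot L^{e,w}$ isomorphically onto the open subset $L^{e,v^{-1}}\cdot L^{e,w}\subset L^{e,v^{-1}w}$, and together with injectivity this yields the open embedding. In other words, the ``crux'' you isolated is precisely the case $w=e$, which is the content of the cited result from \cite{BZ01}; the paper reduces to that case by factoring, rather than redoing it from scratch.
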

\begin{proof}
  We will only show the statement for $\xi^{v,w}$. First of all, we show that the map $\xi^{v,w}$ is well-defined. Since $x\in U\overline{v}U$, we have
  \begin{equation}\label{unique}
    \overline{v}^{-1}x\in \overline{v}^{-1}U\overline{v}U\subset B^-U,  
  \end{equation}
  which shows that $[\overline{v}^{-1}x]_+$ is well-defined. Since $x\in B^-wB^-$, 
  \[
    \overline{v}^{-1}x \in v^{-1}B^-wB^- \subset B^-v^{-1}B^-wB^-=B^-v^{-1}w B^{-}=B^-v^{-1}wB^-. 
  \]
  Here we use that $\ell(v)+\ell(w)=\ell(v^{-1}w)$. Thus $[\overline{v}^{-1}x]_+\in B^-v^{-1}wB^-$. The unique factorization \eqref{unique} implies injectivity of $\xi$. What remains is to show that  $\xi$ maps an open subset $L^{v,e}\cdot L^{e,w}\subset L^{v,w}$ onto an open subset $L^{e,v^{-1}}\cdot L^{e,w}$ of $L^{e,v^{-1}w}$. Denote by $x=x_-\cdot x_+\in L^{v,e}\cdot L^{e,w}$, then we have
  \[
  	\xi(x)=[\overline{v}^{-1}x_-x_+]_+=[\overline{v}^{-1}x_-]_+x_+=\iota\circ\eta(x_-)x_+,
  \]
  where $\eta^v\colon L^{v,e}\to L^{e,v^{-1}}$ sending $x$ to $[\overline{v}^{-1}x]_+^\iota$ is the biregular isomorphism defined in \cite[Defintion 4.6]{BZ01}. Thus we conclude that $\xi^{v,w}$ is an open embedding.
\end{proof}

\subsection{Decorated reduced words}
A {\em word} of length $n$ is an element in $I^n$, where $I$ is an index set. A {\em reduced} word for $w$ in the Weyl group $W$ is a word $\mathbf{i}=(i_1,\ldots, i_n)$ such that if $w=w_{\mathbf{i}}$ and $\ell(w)=n$, where $w_{\mathbf{i}}=s_{i_1}\cdots s_{i_n}$. Denote by $R(w)$ the set of reduced word for $w\in W$. A {\em double} word is a word indexed by $-I\cup I$. For any double word $\mathbf{i}=(i_1,\ldots,i_n)$, denote by $-\mathbf{i}:=(-i_1,\ldots,-i_n)$ and $\mathbf{i}^{op}:=(i_n,\ldots,i_1)$.

Denote by $\mathbf{i}_+$ (resp. $\mathbf{i}_-$) the subword of a double word $\mathbf{i}$ contains of all positive (resp. negative) entries of $\mathbf{i}$. A {\em double reduced} word for $(v,w)$ is a double word $\mathbf{i}=(i_1,\ldots,i_n)$ such that $-\mathbf{i}_-$ (resp. $\mathbf{i}_+$) is a reduced word for $v$ (resp. $w$). Denote by $R(v,w)$ the set of double reduced words for $(v,w)\in W\times W$.

Given a pair of subsets $(K,L)$ of $[1,n]$ satifying $|K|=|L|$, we have the following bijections $\sigma_{K,L}$ and $\rho_{K,L}$ from $K$ to $L$:
\begin{align*}
	\widetilde{\sigma}_{K,L}&\colon K\to  L \ :\ k\mapsto l, \text{~where~} l \text{~is uniquely determined by~} |[1,l]\cap L|=|[1,k]\cap K|,\\
	\widetilde{\rho}_{K,L}&\colon K\to  L \ :\ k\mapsto l, \text{~where~} l \text{~is uniquely determined by~} |L|-|[1,l]\cap L|+1=|[1,k]\cap K|,
\end{align*}
which combine into the following permutation of $[1,n]$: 
\[
  \sigma_{K,L}(l)=
  \begin{cases}
    \widetilde{\rho}_{K,L}(k), &\text{if~} k\in K;\\
    \widetilde{\sigma}_{\overline{K},\overline{L}}(k), &\text{if~} k\in \overline{K}.
  \end{cases}.
\]
Note for $K=L=[1,n]$, the permutation $\widetilde{\sigma}_{K,L}=\id$ and $\widetilde{\rho}_{K,L}(k)=n+1-k$, which is the permutation that revers the order. Also note that $\widetilde{\sigma}_{K,L}\circ \widetilde{\sigma}_{L,K}=\id_K$ and $\widetilde{\rho}_{K,L}\circ \widetilde{\rho}_{L,K}=\id_K$.

Given a reduced word $\mathbf{i}$ and a subset $K\subset [1,\ell(\mathbf{i})]$, denote by $\mathbf{i}_K$ the subword of $\mathbf{i}$ indexed by $K$. Denote by $\overline{K}:=[1,\ell(\mathbf{i})]\backslash K$ the complement of $K$. The subset $K\subset [1,\ell(\mathbf{i})]$ is {\em compatible} with $\mathbf{i}$ if
\begin{equation}\label{decor}
  w_{\mathbf{i}_K}w_{\mathbf{i}_{\overline{K}}}=w_{\mathbf{i}}.
\end{equation}
For any $k\leqslant \ell(\mathbf{i})$, the subset $[1,k]\subset [1,\ell(\mathbf{i})]$ is compatible with $\mathbf{i}$. Note that if $(\mathbf{i},K)$ is compatible, then $(\mathbf{i}^{op},\overline{K})$ is compatible. It is an interesting problem to find all compatible $K$ for a given $\mathbf{i}$.

A {\em decorated} word is a triple $(\mathbf{i}, K,L)$, where $\mathbf{i}$ is a word, and $K,L$ are subsets of $[1,\ell(\mathbf{i})]$ such that $|K|=|L|$. A {\em decorated} reduced word for $w\in W$ is a decorated word $(\mathbf{i},K,L)$ such that $\mathbf{i}\in R(w)$, and $(\mathbf{i},K)$ is compatible. Denote by $\widehat{R}(w)$ the set of decorated reduced word for $w\in W$. To each decorated word, define a map by the formula
\[
    \mathbb{I}(\mathbf{i},K,L)=(-\mathbf{i}_K^{op}) \shuffle_L  \mathbf{i}_{\overline{K}},
\]
where $\mathbf{i} \shuffle_L \mathbf{j}$ with $|\mathbf{i}|=|L|$ is a shuffle of $\mathbf{i}$ and $\mathbf{j}$ by putting $\mathbf{i}$ at the position $L$ and $\mathbf{j}$ at the rest in order. The map $\mathbb{I}$ can be described by using $\sigma_{K,L}$ and $\rho(K,L)$:
\[
  \mathbb{I}(i_k)=
  \begin{cases}
    -i_{\sigma_{K,L}(k)}, & \text{if~} k\in K;\\
    i_{\sigma_{K,L}(k)}, & \text{if~} k\in \overline{K}.
  \end{cases}
\]
\begin{Lem}
  The image of $\widehat{R}(w)$ under the map $\mathbb{I}$ is 
  \[
    \mathbb{I}(\widehat{R}(w))=\bigsqcup R(v_1,v_2), 
  \]
  where the disjoint union is over pairs $(v_1,v_2)$ satisfying $v_1^{-1}v_2=w$ and $\ell(v_1)+\ell(v_2)=\ell(w)$.
\end{Lem}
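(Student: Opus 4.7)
The plan is to establish two inclusions and then disjointness, viewing the problem as a bijection-verification combined with a compatibility check.

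\textbf{Forward inclusion.} Start with $(\mathbf{i},K,L)\in \widehat{R}(w)$. Since $\mathbf{i}$ is reduced we have $\ell(w_{\mathbf{i}})=|K|+|\overline{K}|$, while the general inequality $\ell(w_{\mathbf{i}_K}w_{\mathbf{i}_{\overline{K}}})\leqslant \ell(w_{\mathbf{i}_K})+\ell(w_{\mathbf{i}_{\overline{K}}})\leqslant |K|+|\overline{K}|$ combined with the compatibility relation \eqref{decor} forces equalities throughout. Hence $\mathbf{i}_K\in R(v_1^{-1})$ and $\mathbf{i}_{\overline{K}}\in R(v_2)$ for $v_1:=w_{\mathbf{i}_K}^{-1}$ and $v_2:=w_{\mathbf{i}_{\overline{K}}}$, and $v_1^{-1}v_2=w$ with $\ell(v_1)+\ell(v_2)=\ell(w)$. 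By the formula for $\mathbb{I}$, the subword of $\mathbb{I}(\mathbf{i},K,L)$ at the positions $L$ is $-\mathbf{i}_K^{op}$, so $-\mathbb{I}(\mathbf{i},K,L)_- = \mathbf{i}_K^{op}$, which is reduced for $(w_{\mathbf{i}_K})^{-1}=v_1$; and $\mathbb{I}(\mathbf{i},K,L)_+ = \mathbf{i}_{\overline{K}}$ is reduced for $v_2$. Therefore $\mathbb{I}(\mathbf{i},K,L)\in R(v_1,v_2)$, which shows $\mathbb{I}(\widehat{R}(w))\subseteq \bigsqcup R(v_1,v_2)$.

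\textbf{Reverse inclusion.} Conversely, given any $\mathbf{j}\in R(v_1,v_2)$ with $v_1^{-1}v_2=w$ and $\ell(v_1)+\ell(v_2)=\ell(w)$, I construct an explicit preimage. Let $L$ be the set of positions of the negative entries of $\mathbf{j}$, set $n=\ell(w)=\ell(v_1)+\ell(v_2)$, put $K:=[1,\ell(v_1)]$, and define the word
\[
    \mathbf{i} := (-\mathbf{j}_-)^{op}\cdot \mathbf{j}_+,
\]
obtained by concatenation. By definition $-\mathbf{j}_-\in R(v_1)$, hence $(-\mathbf{j}_-)^{op}\in R(v_1^{-1})$, and $\mathbf{j}_+\in R(v_2)$; since $\ell(v_1^{-1}v_2)=\ell(v_1^{-1})+\ell(v_2)$, the concatenation $\mathbf{i}$ is a reduced word for $v_1^{-1}v_2=w$, so $\mathbf{i}\in R(w)$. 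By construction $\mathbf{i}_K=(-\mathbf{j}_-)^{op}$ and $\mathbf{i}_{\overline{K}}=\mathbf{j}_+$, so $w_{\mathbf{i}_K}w_{\mathbf{i}_{\overline{K}}}=v_1^{-1}v_2=w_{\mathbf{i}}$, proving compatibility; thus $(\mathbf{i},K,L)\in \widehat{R}(w)$. Finally, unpacking $\mathbb{I}$ at this input gives $-\mathbf{i}_K^{op}=\mathbf{j}_-$ (placed at positions $L$) and $\mathbf{i}_{\overline{K}}=\mathbf{j}_+$ (placed at positions $\overline{L}$), whose shuffle reproduces $\mathbf{j}$.

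\textbf{Disjointness.} Any double reduced word $\mathbf{j}$ determines the pair $(w_{-\mathbf{j}_-},w_{\mathbf{j}_+})$ uniquely, so the sets $R(v_1,v_2)$ for distinct pairs are pairwise disjoint, and the union is indeed a disjoint union.

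I expect the only slightly delicate step to be the forward inclusion: one must be careful to separate the two pieces of information in $(\mathbf{i},K,L)$ (the compatibility condition, which upgrades $\mathbf{i}_K,\mathbf{i}_{\overline{K}}$ to reduced expressions, and the subset $L$, which only records the shuffling positions) and then to track the transpose-and-negate convention $-\mathbf{i}_K^{op}$ correctly so that the identification $w_{(-\mathbf{j}_-)}=w_{\mathbf{i}_K}^{-1}$ comes out right. The reverse inclusion is essentially a ``canonical lift by concatenation'' and is straightforward once the forward direction is clean.
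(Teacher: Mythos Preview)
Your proof is correct. The paper states this lemma without proof, presumably regarding it as an elementary unwinding of the definitions; your argument supplies exactly that unwinding, and the forward inclusion (where compatibility forces $\mathbf{i}_K$ and $\mathbf{i}_{\overline{K}}$ to be reduced via the length count) together with the canonical concatenation lift for the reverse inclusion are both handled cleanly.
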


For simplicity, in what follows, we will refer to $\mathbb{I}$ as the {\em associated double word} of $(\mathbf{i},K,L)$. Note that in general the map $\mathbb{I}$ is not injective. 

\begin{Ex}\label{Exforsl4}
  a) For $G=\SL_4$, let $\mathbf{i}=(1,2,3,1,2,1)$ and a decoration $(K=[1,3],L=\{1,2,5\})$. By definition, we have $v_k=s_3s_2s_1$ and $w_k=s_1s_2s_1$. And the double reduced word is $(-3,-2,1,2,-1,1)$.

  b) Denote by $c$ a repetition-free element of length $r$, {\em i.e.}, there exists $n$ such that $c^n$ is reduced. A Coxter element is repetition-free. Fix a reduced word $\mathbf{j}:=(i_1,\ldots,i_r)$ for $c$ and extend it to a reduced word $\mathbf{i}$ by repeating $\mathbf{j}$. Denote by $K_k:=[kr+1,(k+1)r]$ and $L:=[1,r]$. Then it is clear that $\mathbb{I}(\mathbf{i},K_k,L)=\mathbb{I}(\mathbf{i},K_l,L)$ for $k,l<n$.

  c) For any word $\mathbf{i}=(i_1,\ldots,i_k,i_{k+1},\ldots,i_n)$ such that $s_{i_k}s_{i_{k+1}}=s_{i_{k+1}}s_{i_k}$, denote by 
  \[
  	\mathbf{i}'=(i_1,\ldots,i_{k+1},i_k,\ldots,i_n).
  \]
  Then it is easy to check $\mathbb{I}(\mathbf{i},[1,k],L)=\mathbb{I}(\mathbf{i},[1,k-1]\cup \{k+1\},L)$ for any $L$ satisfying $|L|=k$.
\end{Ex}

\subsection{Equivalence of decorated words}\label{EDC}
In what follows we want to explore the set $\widehat{R}(w)$ in more details. Given a subset $K$ of $[1,n]$, a pair $\{k,k'\}$ is {\em separated} by $K$ is neither $\{k,k'\}\subset K$ nor $\{k,k'\}\subset \overline{K}$, otherwise we say they are {\em not separated} by $K$. For a triple $(\mathbf{i},K,L)$, where $\mathbf{i}$ is a reduced word and $K,L$ are subset of $[1,\ell(\mathbf{i})]$ satisfying $|K|=|L|$, a {\em simple move} is a transformation of $(\mathbf{i},K,L)$ in the following form:
\begin{itemize}
	\item[($\tau_1$)] if $\{k,k+1\}$ is not separated by $K$, $|\sigma_{K,L}(k)-\sigma_{K,L}(k+1)|=1$ and $s_{i_{k}}s_{i_{k+1}}=s_{i_{k+1}}s_{i_{k}}$, replace $i_k,i_{k+1}$ in $\mathbf{i}$ by $i_{k+1},i_k$;

	\item[($\tau_2$)] if $\{l,l+1\}$ is separated by $L$ and $i_{\sigma_{L,K}(k)}\neq i_{\sigma_{L,K}(k)}$, replace $L$ by $L'=(L-\{l,l+1\})\cup (\{l,l+1\}-L)$;

 	\item[($\tau_3$)] if $\{1\}\subset L$, replace $K$ by $K'=K\backslash \{ \text{the last term of~} K \}$ and $L$ by $L'=L\backslash \{1\}$.
\end{itemize}
Besides, we have the following move: 
\begin{itemize}
 	\item[($\tau_4$)] if $\{l,l+1\}$ is separated by $L$ and $i_{\sigma_{L,K}(k)} = i_{\sigma_{L,K}(k)}$, replace $L$ by $L'=(L-\{l,l+1\})\cup (\{l,l+1\}-L)$.
\end{itemize}
\begin{Lem}\label{App:Lem1}
	For a decorated word $(\mathbf{i},K,L)$, the moves $\tau_i$'s give rise to the following moves of the double word $\mathbb{I}(\mathbf{i},K,L):=(j_1,\ldots, j_n)$:
	\begin{itemize}
		\item[($\tau_1'$)] if $j_k\cdot j_{k+1}>0$ and $s_{j_{k}}s_{j_{k+1}}=s_{j_{k+1}}s_{j_{k}}$, replace $j_k,j_{k+1}$ by $j_{k+1},j_k$;

		\item[($\tau_2'$)] if $j_k\cdot j_{k+1}<0$ and $|j_{k}|\neq |j_{k+1}|$, replace $j_k,j_{k+1}$ by $j_{k+1},j_k$;

 		\item[($\tau_3'$)] if $i_1<0$, replace $i_1$ by $-i_1$;

 		\item[($\tau_4'$)] if $j_k\cdot j_{k+1}<0$ and $|j_{k}|= |j_{k+1}|$, replace $j_k,j_{k+1}$ by $j_{k+1},j_k$.
	\end{itemize}
\end{Lem}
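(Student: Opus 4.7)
The plan is to verify each correspondence $\tau_i \leftrightarrow \tau_i'$ by a direct case-by-case computation from the shuffle formula. I would first rewrite the double word $\mathbb{I}(\mathbf{i},K,L)=(j_1,\dots,j_n)$ concretely as $j_p=-i_{\sigma_{L,K}(p)}$ if $p\in L$ and $j_p=i_{\sigma_{L,K}(p)}$ if $p\in\overline{L}$, where $\sigma_{L,K}$ is the inverse of $\sigma_{K,L}$. Each move $\tau_i$ is then a local perturbation of the decorated data $(\mathbf{i},K,L)$ whose effect on $(j_1,\dots,j_n)$ can be read off position by position, and the claim in each case reduces to a bookkeeping check about how $\sigma_{K,L}$ transforms.

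For $\tau_1$, the hypothesis that $\{k,k+1\}$ is not separated by $K$ forces $j_{\sigma_{K,L}(k)}$ and $j_{\sigma_{K,L}(k+1)}$ to share a sign, and the adjacency condition $|\sigma_{K,L}(k)-\sigma_{K,L}(k+1)|=1$ places them at consecutive positions of the double word; swapping $i_k$ with $i_{k+1}$ in $\mathbf{i}$ therefore swaps precisely these two adjacent same-sign entries, and the braid commutation $s_{i_k}s_{i_{k+1}}=s_{i_{k+1}}s_{i_k}$ transfers verbatim, which is exactly $\tau_1'$. For $\tau_2$ and $\tau_4$, the hypothesis that $\{l,l+1\}$ is separated by $L$ gives $j_l$ and $j_{l+1}$ opposite signs, and toggling $\{l,l+1\}$ within $L$ preserves the internal orderings of both $\mathbf{i}_K^{op}$ and $\mathbf{i}_{\overline{K}}$ while exchanging which of the two shuffle streams supplies each of the two positions; this produces a transposition of the two adjacent opposite-sign entries, with the condition $|j_l|\neq|j_{l+1}|$ (resp.\ $|j_l|=|j_{l+1}|$) distinguishing $\tau_2'$ from $\tau_4'$.

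For $\tau_3$, since $1$ is the smallest element of $L$, the reversal built into $\widetilde{\rho}_{L,K}$ makes $\sigma_{L,K}(1)$ the largest index in $K$, so $j_1=-i_{\mathrm{last}(K)}$; removing $\mathrm{last}(K)$ from $K$ and $1$ from $L$ transfers this index into $\overline{K'}$ and the position into $\overline{L'}$, producing $j_1=+i_{\mathrm{last}(K)}$, which is $\tau_3'$. I expect the main obstacle to be precisely the verification that every other position of the double word is unaffected by this transfer: concretely, one must show that $\widetilde{\sigma}_{\overline{L'},\overline{K'}}$ matches $1$ with $\mathrm{last}(K)$ and agrees with $\widetilde{\sigma}_{\overline{L},\overline{K}}$ on the remaining indices. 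I plan to handle this by decomposing the shuffle into its two order-preserving injections and invoking the elementary combinatorial fact that deleting a matched pair from an order-reversing bijection of two totally-ordered sets leaves the remaining matchings intact, so that both the positions and the signs of $j_2,\dots,j_n$ are preserved verbatim.
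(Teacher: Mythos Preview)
The paper itself omits the proof, declaring it ``straightforward''. Your plan for $\tau_1$, $\tau_2$, and $\tau_4$ is correct: these are local bookkeeping checks, and your description of how the two shuffle streams transform under each move is accurate.

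The gap is in $\tau_3$. You correctly flag the obstacle as showing that positions $2,\dots,n$ of the double word are unaffected, and you propose to resolve it via the fact that deleting a matched extremal pair from an order-reversing bijection leaves the remaining matchings intact. This handles the $L\to K$ half (deleting the pair $(1,\mathrm{last}(K))$ from $\widetilde\rho_{L,K}$), but not the $\overline L\to\overline K$ half: there you are \emph{inserting} the pair $(1,\mathrm{last}(K))$ into the order-preserving bijection $\widetilde\sigma_{\overline L,\overline K}$, and this pair need not be order-compatible with the existing one, since $\mathrm{last}(K)$ is in general not the smallest element of $\overline{K'}=\overline K\cup\{\mathrm{last}(K)\}$. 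A concrete failure: take $n=4$, $K=\{1,3\}$, $L=\{1,2\}$. Then $\mathbb I=(-i_3,-i_1,i_2,i_4)$; after $\tau_3$ one has $K'=\{1\}$, $L'=\{2\}$, giving $\mathbb I'=(i_2,-i_1,i_3,i_4)$, whereas $\tau_3'$ would yield $(i_3,-i_1,i_2,i_4)$. Positions $1$ and $3$ are both changed, and not as $\tau_3'$ predicts. So either the statement needs an extra hypothesis (for instance that $\mathrm{last}(K)$ precede every element of $\overline K$, as when $K$ is an initial interval), or $\tau_3'$ is only the correct description in such restricted situations; in any case your proposed resolution does not go through as written.
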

The proof is straight forward and we omit it here. 

Two decorated reduced words $(\mathbf{i},K,L)$ and $(\mathbf{i}',K',L')$ of $w$ are {\em equivalent} if there is a sequence of decorated words in $\widehat{R}(w)$ such that they are related by simple moves. If $(\mathbf{i},K,L)$ and $(\mathbf{i}',K',L')$ are equivalent decorated reduced word of $w_0$, then by Lemma \ref{App:Lem1} and \cite[Proposition 7.2]{BZ01}, the two charts $x_{\mathbb{I}}$ and $x_{\mathbb{I}'}$ are related by a sequence of monomial change of coordinates, which implies that the combinatoric expressions in Theorem \ref{comb} associated to $(\mathbf{i},K,L)$ and $(\mathbf{i}',K',L')$ differ by a linear transformation. One of the advance of using decorated reduced words is to give more combinatoric expressions for the tensor multiplicities. In what follows, we will try to justify that using decorated reduced words, we do get many more combinatoric expressions (up to linear transformations).

Note that the Dynkin diagram can be partitioned into two sets of non-adjacent vertices. Let $a$ and $b$ be the product of the simple reflections corresponding to the  sets respectively. Note that both $a$ and $b$ are products of commuting reflections. Let $h$ be the Coxter number. If $\mathfrak{g}$ is not of type $A_{2n}$, then we know $w_0=(ab)^{h/2}$. If $\mathfrak{g}$ is of type $A_{2n}$, we have $w_0=(ab)^{(h-1)/2}a=b(ab)^{(h-1)/2}$.
\begin{Lem}\label{App:Lemma}
	Let $w:=aba$ be a reduced expression, where $a$ is a product of commuting simple reflections. Then the number of non-equivalent decorated words of $w$ is greater than or equal to $2^{\ell(a)}$.
\end{Lem}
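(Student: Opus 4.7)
The plan is to exhibit $2^{\ell(a)}$ decorated reduced words for $w=aba$ and separate their equivalence classes by an invariant read off the associated double word. Write $k:=\ell(a)$, let $J_a\subseteq I$ be the set of simple-reflection indices appearing in $a$, and let $J_b$ be the corresponding set for $b$; by hypothesis the $s_c$ with $c\in J_a$ pairwise commute (and similarly for $J_b$), and $J_a\cap J_b=\emptyset$.

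Fix any ordering $\mathbf{a}=(i_1,\ldots,i_k)$ of $J_a$ and any reduced word $\mathbf{b}$ of $b$, and set $\mathbf{i}:=\mathbf{a}\cdot\mathbf{b}\cdot\mathbf{a}$, a reduced word for $w$ of length $n=2k+\ell(b)$. For each subset $S\subseteq J_a$, identified with $\{m\in[1,k]:i_m\in S\}$, I would take
\[
K_S:=\{m\in[1,k]:i_m\in S\},\qquad L_S:=\{n-|S|+1,\ldots,n\}.
\]
Compatibility of $(\mathbf{i},K_S)$ is immediate from $w_{\mathbf{i}_{K_S}}\cdot w_{\mathbf{i}_{\overline{K_S}}}=a_S\cdot(a_{\overline S}\,b\,a)=aba=w$, so $(\mathbf{i},K_S,L_S)\in\widehat R(w)$. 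Unwinding the shuffle defining $\mathbb{I}$ then gives
\[
\mathbb{I}_S \;=\; \bigl(+\mathbf{a}_{\overline S},\ +\mathbf{b},\ +\mathbf{a},\ -\mathbf{a}_S^{op}\bigr),
\]
with the last $|S|$ positions occupied by $-\mathbf{a}_S^{op}$. In particular, each $c\in J_a$ has exactly two occurrences in $\mathbb{I}_S$, and the rightmost one carries sign $-$ if and only if $c\in S$.

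I would then introduce the invariant
\[
\mathrm{Inv}(\mathbb{I}):=\{c\in J_a:\ \text{the rightmost occurrence of }c\text{ in }\mathbb{I}\text{ has sign }-\}
\]
and check that it is constant on each equivalence class. By Lemma~\ref{App:Lem1} it suffices to verify invariance under the double-word moves $\tau_1',\tau_2',\tau_3'$. The moves $\tau_1'$ and $\tau_2'$ never swap two entries of the same absolute value---such swaps would require the excluded move $\tau_4'$---and each preserves every individual entry's sign, so the ordered sequence of signs of the $c$-entries is unchanged for every letter $c$. The move $\tau_3'$ only alters the sign at position $1$; but for any $c\in J_a$ the rightmost of its two occurrences lies strictly to the right of the first, hence at position $\geqslant 2$, and is therefore untouched.

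Since $\mathrm{Inv}(\mathbb{I}_S)=S$ for every $S\subseteq J_a$, the $2^{|J_a|}=2^{\ell(a)}$ subsets of $J_a$ yield pairwise non-equivalent decorated reduced words $(\mathbf{i},K_S,L_S)$, proving the desired bound. The main delicacy is the explicit unwinding of the shuffle that identifies $\mathbb{I}_S$ with the displayed signed word; after that, invariance of $\mathrm{Inv}$ under $\tau_3'$ is just the observation that the single position where $\tau_3'$ acts cannot be the \emph{second} of the two occurrences of any letter of $J_a$.
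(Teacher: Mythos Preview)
Your argument is correct and in fact cleaner than the paper's. The paper first assumes the letters of $a$ and $b$ are disjoint, uses simple moves to bring the trivially decorated word $(I,J,I)$ to the normal form $(J,-i_1,i_1,\ldots,-i_n,i_n)$, applies $\tau_4'$ to flip selected pairs $(-i_k,i_k)\leftrightarrow(i_k,-i_k)$, and then asserts without further detail that the resulting $2^n$ double words are pairwise non-equivalent; the case where $b$ shares letters with $a$ is handled only by the remark that $\tau_4'$ is then needed to pass through $b$, yielding ``more'' classes. You instead construct the $2^{\ell(a)}$ decorated words $(\mathbf{i},K_S,L_S)$ directly and separate them by an explicit invariant---the set of $c\in J_a$ whose rightmost occurrence carries a minus sign---checking its constancy under $\tau_1',\tau_2',\tau_3'$. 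This supplies exactly the verification the paper omits, and it handles the general $b$ uniformly: the key point is only that each $c\in J_a$ occurs at least twice (once in each copy of $\mathbf a$), so its rightmost occurrence is never at position $1$ and hence is untouched by $\tau_3'$.

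One quibble: you assert ``by hypothesis \ldots (and similarly for $J_b$), and $J_a\cap J_b=\emptyset$'', but neither claim is part of the hypothesis---only $a$ is assumed to be a product of commuting simple reflections, and nothing prevents the letters of $a$ from recurring in $b$. Fortunately your proof never uses these claims: the invariant works as soon as every $c\in J_a$ has $\geqslant 2$ occurrences, and the computation $\mathrm{Inv}(\mathbb{I}_S)=S$ only uses that the last block of $\mathbb{I}_S$ is $-\mathbf a_S^{op}$ and the block immediately to its left is the full $+\mathbf a$. Replace ``exactly two occurrences'' by ``at least two'' and delete the two unjustified assertions, and the proof goes through verbatim.
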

\begin{proof}
	First, let us assume that the expression of $b$ contains no simple reflections that appear in the expression of $a$. Let $I$ be a reduced word for $a$, and $J$ be a reduced word for $b$. Denote by $n=\ell(a)$. Consider the decorated word $\left( (I,J,I), \emptyset, \emptyset \right)$ and $\mathbb{I}$ the associated double reduced of it. By the fact that $a$ is a product of commuting simple reflections and using move ($\tau_1'$) ($\tau_2'$) and ($\tau_3'$), one can get the following double reduced word:
	\begin{equation}\label{flip}
		(J, -i_1,i_1,\ldots,-i_n,i_n).
	\end{equation}
	Now we have $2^{n}$ ways to use the flip the pairs $i,-i$ in the expression \eqref{flip}. It is not hard to check all this new expression are non-equivalent. If the expression of $b$ contains the simple reflections of $a$, then we need to use ($\tau_4'$) to pass $-i_k$ through $b$, thus we will get more non-equivalent classes. Thus we get the lower bound $2^{\ell(a)}$.
\end{proof}

Similar to this Lemma, for the reduced expression of $w_0$, we have
\begin{Pro}\label{App:prop}
	For a reduced expression of form $w:=(ab)^na^{\varepsilon}$, where $a$ and $b$ are  products of commuting simple reflections and $n\in \mathbb{Z}_+$ and $\varepsilon\in\{ 0,1\}$ . Then the number of non-equivalent decorated words of $w$ is greater than or equal to $n^{\ell(a)}+n^{\ell(b)}$.
\end{Pro}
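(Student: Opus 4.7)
The strategy generalizes Lemma \ref{App:Lemma}, using the $n$-fold repetition of $ab$ in $w$ to obtain $n$ independent choices per simple reflection in $a$ or $b$. For each $\vec m = (m_1, \ldots, m_{\ell(a)}) \in \{1, \ldots, n\}^{\ell(a)}$ I will construct a decorated reduced word $\mathcal D_a(\vec m)$ whose associated double word places, for each $k$, a distinguished $-i_k$ in the $m_k$-th of the $n$ ``slots'' separating the consecutive copies of $J$; symmetrically, for each $\vec m' \in \{1, \ldots, n\}^{\ell(b)}$ I construct $\mathcal D_b(\vec m')$. The goal is then to show that these $n^{\ell(a)} + n^{\ell(b)}$ decorated words are pairwise inequivalent.

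To build $\mathcal D_a(\vec m)$, I start from the trivially decorated word attached to the standard reduced expression $\mathbf i = (I, J, I, J, \ldots, I, J, I^\varepsilon)$ and apply the moves $(\tau_1'), (\tau_2'), (\tau_3')$ in the style of the Lemma: $(\tau_1')$ rearranges the pairwise commuting letters within $a$ (resp.\ within $b$), $(\tau_2')$ slides negative letters past positive letters of different indices, and $(\tau_3')$ flips the first entry's sign. For each $k$, these primitives suffice to transport the $k$-th letter $i_k$ of the copy $I_{m_k}$ to position $1$, flip it to $-i_k$, and reposition it in the desired $m_k$-th slot. When $a$ and $b$ share simple reflections, the auxiliary move $(\tau_4')$ enters in the same supporting role as in the second paragraph of the Lemma's proof. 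Compatibility of the decoration $(K,L)$ is preserved at every step because each decorated-word move is designed precisely to preserve it.

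For pairwise inequivalence I use the \emph{sign sequence} invariant: for each simple reflection $i$, read off the $\pm$'s at the $\pm i$-entries of a double word from left to right. By inspection of Lemma \ref{App:Lem1}, neither $(\tau_1')$ nor $(\tau_2')$ ever swaps two $\pm i$-entries (the former swaps same-sign entries of commuting \emph{distinct} letters, the latter swaps mixed-sign entries of distinct letters), so both moves preserve every letter's sign sequence. Only $(\tau_3')$ can alter a sign sequence, and then only at its first position. By arranging each $\mathcal D_a(\vec m)$ normal form to begin with some $+j$, $j \in b$, the sign sequences of all $i_k \in a$ become strict invariants, and different tuples $\vec m$ yield sign sequences differing in the position of the unique negative sign, so the $\mathcal D_a(\vec m)$'s give $n^{\ell(a)}$ pairwise inequivalent classes. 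The analogous argument applied to the $\mathcal D_b(\vec m')$'s (whose normal forms begin with some $+i$, $i \in a$) gives $n^{\ell(b)}$ pairwise inequivalent classes. The two families are disjoint, because every $\mathcal D_a$-word carries at least one negative $i_k$-entry ($i_k \in a$) and no negative $j$-entries ($j \in b$), while every $\mathcal D_b$-word displays the opposite pattern; no sequence of permitted moves can interchange these patterns since $(\tau_3')$ flips only one sign at a time and only at position $1$.

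The main technical obstacle is the construction step itself: producing the explicit sequence of moves realizing each promised normal form and verifying at each intermediate step that the decoration $(K,L)$ remains compatible. This is a direct but notationally heavy generalization of the calculation in Lemma \ref{App:Lemma}; the commutativity hypotheses on $a$ and $b$, together with the $n$-fold repetition, provide the algebraic room needed to realize all $n^{\ell(a)} + n^{\ell(b)}$ normal forms and to keep the sign sequences under control.
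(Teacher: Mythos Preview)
The paper gives no proof of this proposition beyond the sentence ``Similar to this Lemma,'' so your proposal is already more detailed than what appears there, and it follows exactly the route the paper intends: generalize the construction in Lemma~\ref{App:Lemma} from the single repetition $aba$ to $(ab)^n a^{\varepsilon}$.

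Your sign-sequence invariant, however, is not quite correct as stated. You claim that arranging the normal form to begin with some $+j$ ($j\in b$) freezes the sign sequences of the $a$-letters. But equivalence is a relation on classes, not on chosen representatives: starting from your normal form one may apply $(\tau_2')$ to slide some $-i_k$ ($i_k\in a$, $i_k\neq j$) into position~$1$, and then $(\tau_3')$ flips it, changing $i_k$'s sign sequence. What \emph{is} a genuine invariant is each letter's sign sequence with its leading entry deleted: since none of $(\tau_1'),(\tau_2'),(\tau_3')$ ever swaps two entries with the same absolute value, the relative order of the $\pm i_k$-occurrences is preserved, so only the leftmost one can ever reach position~$1$ and have its sign altered by $(\tau_3')$ or its inverse. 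With this corrected invariant the pairwise-inequivalence argument within each family goes through. The disjointness of the two families, on the other hand, needs to be revisited: your stated reason (``at least one negative $i_k$-entry'') is not an invariant, and in fact $\mathcal D_a(1,\dots,1)$ and $\mathcal D_b(1,\dots,1)$ both have all-positive truncated sign sequences, so they may well lie in the same class (both being equivalent to the trivially decorated word), costing one in the final count.
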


\begin{Rmk}
	The charts given by the $2^{\ell(a)}$ double reduced words arising from \ref{flip} by using ($\tau_4'$) are birational isomorphic, but not biregular isomorphic. Thus in Theorem \ref{comb}, we get at least $2[h/2]^{[r/2]}$ new combinatoric expression (up to linear transformation), where $h$ is the Coxter number and $r$ is the rank, since the longest element $w_0$ has a form (see \cite{Humph}, for example) as in Proposition \ref{App:prop}.
\end{Rmk}

\subsection{Positive structure for double Bruhat cells}

On Bruhat cell $Bw_0B$, we have the following regular function (Berenstein-Kazhdan potential):
\begin{Def}\label{BKp0}
  On the Bruhat cell $G^{w_0}=Bw_0B$, the BK potential $\Phi_{BK}$ is
  \[
    \Phi_{BK}(uh\overline{w_0}u')=\chi^{\st}(u)+\chi^{\st}(u'), \quad \text{for~} uh\overline{w_0}u'\in G^{w_0}.
  \]
\end{Def}
Since $Bw_0B\cap B_-\hookrightarrow B_-$, so the potential restrict to open dense subset of $B_-$. In what follows, we will introduce a positive structure for $(B^-,\Phi_{BK})$. To a decorated word $(\mathbf{i},K,L)$, let
\[
  \mathbb{I}(\mathbf{i},K,L)=:(j_1,\ldots,j_n)
\]
be the associated double word, define
\[
  x_\mathbb{I}(t_1,\ldots, t_n):=x_{j_1}(t_1)\cdots x_{j_n}(t_n).
\]
Note that $x_{\mathbb{I}}\in L^{v_K, w_K}$ by \cite[Proposition 4.5]{BZ01}, where
\[
	v_K:=w_{\mathbf{i}_K}^{-1};\quad w_K:=w_{\mathbf{i}_{\overline{K}}}.
\]

\begin{Pro}\label{positiveonU}
  Given a decorated reduced word $\left(\mathbf{i},K, L\right)\in \widehat{R}(w)$, the following map
  \[
    \xi_{\mathbb{I}}\colon \mathbb{T}^n\to L^{e,w} \  : \ (t_1,\ldots, t_n) \mapsto [\overline{v_K}^{-1}x_\mathbb{I}(t_1,\ldots, t_n)]_+.
  \]
  is an open embedding. Moreover, for any two associated double reduced words  $\mathbb{I}:=\mathbb{I}(\mathbf{i},K,L )$ and $\mathbb{I}':=\mathbb{I}(\mathbf{i}',K',L')$,  $\xi_{\mathbb{I}}$ and $\xi_{\mathbb{I}'}$ are positively equivalent.
\end{Pro}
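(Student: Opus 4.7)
The plan is to realise $\xi_{\mathbb{I}}$ as a composition of two open embeddings already at hand: the classical Berenstein--Fomin--Zelevinsky toric parameterisation of a double Bruhat cell, and the map $\xi^{v,w}$ from Theorem \ref{open}. First I would unpack the decorated reduced word data: the compatibility condition \eqref{decor} together with length additivity forces both $\mathbf{i}_K$ and $\mathbf{i}_{\overline{K}}$ to be reduced, so that $v_K = w_{\mathbf{i}_K}^{-1}$ and $w_K = w_{\mathbf{i}_{\overline{K}}}$ satisfy $v_K^{-1} w_K = w$ and $\ell(v_K) + \ell(w_K) = \ell(w)$. Under these conditions the associated double word $\mathbb{I} = \mathbb{I}(\mathbf{i}, K, L)$ is a double reduced word for $(v_K, w_K)$, and by \cite[Proposition 4.5]{BZ01} the map $x_{\mathbb{I}} \colon \mathbb{T}^n \to L^{v_K, w_K}$ is an open embedding. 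Theorem \ref{open} then furnishes an open embedding $\xi^{v_K, w_K} \colon L^{v_K, w_K} \hookrightarrow L^{e, w}$, whose composition with $x_{\mathbb{I}}$ is by definition $\xi_{\mathbb{I}}$; this settles the first half of the proposition.

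For the positive equivalence claim, I would take the trivially decorated words $(\mathbf{j}, \emptyset, \emptyset)$ with $\mathbf{j} \in R(w)$ as a family of reference charts, since in that case $v_K = e$ and $\xi_{\mathbf{j}} = x_{\mathbf{j}}$ is the standard BFZ toric chart on $L^{e,w}$, and these are pairwise positively equivalent by \cite{BZ01}. It then suffices to show that every $\xi_{\mathbb{I}}$ is positively equivalent to some $x_{\mathbf{j}}$. For this I would use the explicit formula derived inside the proof of Theorem \ref{open}: whenever $x_{\mathbb{I}}(t)$ factors as $x_-(t)\cdot x_+(t)$ along $L^{v_K, e}\cdot L^{e, w_K}$, one has
\[
    \xi_{\mathbb{I}}(t) \;=\; [\overline{v_K}^{-1}\, x_-(t)]_+ \cdot x_+(t) \;=\; \iota\circ \eta^{v_K}\bigl(x_-(t)\bigr)\cdot x_+(t),
\]
where $\eta^{v_K}$ is the biregular isomorphism of \cite[Definition 4.6]{BZ01}, known there to be positive with respect to the standard positive structures on $L^{v_K, e}$ and $L^{e, v_K^{-1}}$. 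Since $\iota$ is an anti-automorphism fixing every $x_i(t)$, this exhibits $\xi_{\mathbb{I}}$ as a concatenation of positive toric charts on $L^{e, v_K^{-1}}$ and $L^{e, w_K}$, which, by length additivity $\ell(v_K^{-1}) + \ell(w_K) = \ell(w)$, is positively equivalent to a standard $x_{\mathbf{j}}$ for a concatenated reduced word $\mathbf{j} \in R(w)$.

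The subtlety I expect to be the main obstacle is that, for a general pattern $L$, the negative and positive Chevalley factors appearing in $x_{\mathbb{I}}$ are interleaved rather than cleanly separated, so the factorisation $x_-(t)\cdot x_+(t)$ above is not on the nose. To reduce to the favourable arrangement $L = [1, |K|]$, I would apply the shuffle moves $\tau_1', \tau_2', \tau_4'$ of Lemma \ref{App:Lem1} to bring $\mathbb{I}$ into a form in which all negative entries precede all positive ones. Each such shuffle corresponds to a subtraction-free commutation between $x_{-i}(s)$ and $x_j(t)$ from \cite{BZ01}, hence is a positive rational change of coordinates on the parameter torus; however, the $\tau_4'$ move --- which swaps opposite-sign occurrences of the \emph{same} index --- is not a mere commutation but a genuine birational transformation requiring an explicit subtraction-free identity to be verified by hand. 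Once this is in place, $\xi_{\mathbb{I}}$ is positively equivalent to a standard $x_{\mathbf{j}}$, and transitivity yields positive equivalence of any two $\xi_{\mathbb{I}}, \xi_{\mathbb{I}'}$.
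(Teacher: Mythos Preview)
Your proposal is correct and follows essentially the same route as the paper's proof: factor $\xi_{\mathbb{I}}$ as the BFZ chart $x_{\mathbb{I}}\colon \mathbb{T}^n\to L^{v_K,w_K}$ composed with $\xi^{v_K,w_K}$ from Theorem~\ref{open}, then reduce to the separable arrangement $L=[1,|K|]$ by shuffling negatives to the left, where the map becomes $(\iota\circ\eta^{v_K})\times\id$ into $L^{e,v_K^{-1}}\cdot L^{e,w_K}$. The only unnecessary hesitation is your caveat about $\tau_4'$: the relevant subtraction-free identity for swapping $x_{-i}(s)x_i(t)$ with $x_i(t')x_{-i}(s')$ is already provided by \cite[Proposition 7.2]{BZ01} (the paper cites exactly this for the reduction step), so there is nothing to verify by hand.
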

\begin{proof}
  Note that by \cite[Proposition 4.5]{BZ01}, the following map
  \[
  	\tau_{\mathbb{I}} \colon \mathbb{T}^n\to L^{v_K,w_K} \  : \ (t_1,\ldots, t_n) \mapsto x_{\mathbb{I}}(t_1,\ldots, t_n)
  \]
  is an open embedding. So that $\xi_{\mathbb{I}}=\xi^{v_K,w_K}\circ\tau_{\mathbb{I}}$ is an open embedding by Theorem \ref{open}.

  Next, we show that $\xi_{\mathbb{I}}$ and $\xi_{\mathbb{I}'}$ are positive equivalent. A decorated word $(\mathbf{i},K,L)$ for $\mathbf{i}=(i_1,\ldots,i_l)$ is called {\em separable} if $K=L=[1,k]$ or $K=L=\emptyset$. Using the commutating relation of \cite[Proposition 7.2]{BZ01}, any associated double reduced word $\mathbb{I}$ is positive equivalent to a separable one by commutating all $x_{j_i}$ for $j_i<0$ to the left one by one. For a separable decorated word $(\mathbf{i},K,L)$, the map $\xi_{\mathbb{I}}$ is actually the composition:
  \[
    \mathbb{T}^l\to L^{v_K,e}\times L^{e,w_K}\xrightarrow{\iota\circ\eta^{v_K}\times \id} L^{e,v_K^{-1}}\times L^{e,w_K} \to L^{e,v_K^{-1}w_K}, 
  \]
  where $\eta^v\colon L^{v,e}\to L^{e,v^{-1}}$ sending $x$ to $[\overline{v}^{-1}x]_+^\iota$. By \cite[Theorem 4.7]{BZ01}, we conclude that any decorated word $(\mathbf{i},K,L)$ is positive equivalent to a separable one $(\mathbf{i}',[1,|K|],[1,|L|])$. Thus $\xi_{\mathbb{I}}$ and $\xi_{\mathbb{I}'}$ are positive equivalent.
\end{proof}
\begin{Rmk}
	For reduced double Bruhat cell $L^{w,e}$, one can come up with a similar statement. Since we don't use it in the paper, we omit it here.
\end{Rmk}

\begin{Ex}(Continues of Example \ref{Exforsl4} a).)
We just write down the map $\xi_{\mathbb{I}}$ explicitly:
  \[
    \xi_{\mathbb{I}}\colon (t_1,\ldots,t_6)\mapsto [\overline{s_3s_2s_1}^{-1}x_{-3}(t_1)x_{-2}(t_2)x_{1}(t_3)x_{2}(t_4)x_{-1}(t_5)x_{1}(t_6)]_+,
  \]
  and the matrix on the right hand side is given by:
  \[
    \begin{bmatrix}
      1 & 2t_6 & t_2+t_4 & t_1\\
      0 & 1 & t_2t_3+t_2t_6^{-1}+t_4t_6^{-1} & t_1t_3+t_1t_6^{-1}\\
      0 & 0 & 1 & t_1t_2^{-1}\\
      0 & 0 & 0 & 1
    \end{bmatrix}.
  \]
\end{Ex}

Since the natural embedding $L^{e,w_0}\to U$ is open, the map $\xi_{\mathbf{i},\sigma}$ gives a toric chart on $U$ for $(\mathbf{i},\sigma)\in \widehat{\bm{R}}(w_0)$. Because of the open embedding $H\times U^T\hookrightarrow B^-$, we get a toric chart on $B^-$ as well. We denote these positive structures by $\Theta_H$, $\Theta_U$ and $\Theta_{B^-}$ for $H$, $U$ and $B^-$ respectively.

\begin{Pro}\cite[Lemma 3.36]{BKII}
  The triple $(B^-, \Phi_{BK}, \Theta_{B^-})$ is a positive variety with potential. 
\end{Pro}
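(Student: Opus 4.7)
The plan is to verify the two defining conditions for a positive variety with potential: (i) the charts in $\Theta_{B^-}$ are pairwise positively equivalent, and (ii) each chart is positive with respect to $\Phi_{BK}$.

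For (i), every chart in $\Theta_{B^-}$ arises from the open embedding $H\times U^T\hookrightarrow B^-$ by combining the canonical toric chart on $H$ with the image under $(\cdot)^T$ of a chart $\xi_{\mathbb{I}}$ on $U$ attached to some decorated reduced word $(\mathbf{i},K,L)\in\widehat{R}(w_0)$. Proposition \ref{positiveonU} already establishes that any two such $\xi_{\mathbb{I}}$ and $\xi_{\mathbb{I}'}$ are positively equivalent. The anti-automorphism $(\cdot)^T$ sends $x_i(t)$ to $y_i(t)$ and fixes $H$ pointwise, so it transports a positive chart of $U$ to a positive chart of $U^T=U^-$ while preserving the positive-equivalence class; combining with the trivial positive structure on $H$, all charts in $\Theta_{B^-}$ lie in one positive-equivalence class.

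For (ii), since positive rationality of $\Phi_{BK}\circ\theta$ is stable under positive equivalence of charts, it suffices to verify it for a single $\theta\in\Theta_{B^-}$. Take $\theta$ attached to the trivially decorated word $(\mathbf{i},\emptyset,\emptyset)$ for $\mathbf{i}=(i_1,\dots,i_m)\in R(w_0)$, which parameterizes an open dense subset of $B^-$ by
\[
(h,t_1,\dots,t_m)\;\longmapsto\; h\cdot y_{i_m}(t_m)\cdots y_{i_1}(t_1).
\]
For an element $b\in B^-\cap Bw_0B$, the decomposition $b=u_1\,h'\,\overline{w_0}\,u_2$ with $u_1,u_2\in U$ and $h'\in H$ expresses $u_1,u_2,h'$ as positive Laurent rational expressions in $(h,t_1,\dots,t_m)$, via the factorization formulas for double Bruhat cells in terms of generalized minors (equivalently, twist maps). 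Hence $\Phi_{BK}\circ\theta = \chi^{\st}(u_1)+\chi^{\st}(u_2)$ is a positive rational function of $(h,t_\bullet)$. This is the substance of \cite[Lemma 3.36]{BKII}.

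Combining (i) and (ii) yields the proposition. The main obstacle is step (ii) --- the positive expressions for the Bruhat factors $u_1,u_2,h'$ of an element of $B^-\cap Bw_0B$ in terms of the toric coordinates of the reference chart. Once that positivity is granted for one chart from \cite{BKII}, its propagation to all decorated charts is formal via the positive equivalence established in (i).
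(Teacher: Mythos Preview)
Your argument is correct. The paper itself does not give a proof of this proposition; it simply cites \cite[Lemma~3.36]{BKII}. Your write-up makes explicit what that citation leaves implicit: the cited lemma in \cite{BKII} handles the positivity of $\Phi_{BK}$ in the standard (undecorated) charts, while the present paper's enlarged positive structure $\Theta_{B^-}$ contains the new decorated charts $\xi_{\mathbb{I}}$; the bridge between the two is precisely Proposition~\ref{positiveonU}, which you invoke correctly.

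One minor remark on step~(ii): rather than passing through the Bruhat factorization $b=u_1 h'\overline{w_0}u_2$ and arguing that $u_1,u_2$ are positive in the toric parameters, it is slightly more direct to use the generalized-minor formula~\eqref{BKp} for $\Phi_{BK}$ together with the positivity of generalized minors in the factorization coordinates (as in \cite{BZ01,FZ}). Either route works, and both ultimately rest on the same positivity results from \cite{BZ01,BKII}.
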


\subsection{Factorization problems}\label{Factorpro}

In this section, we will discuss the inverse map of $x_{\mathbb{I}}$. First of all, let us recall the following well-known general statement:
\begin{Pro}\label{inversemapapp}
	Given an open embedding $\varphi: {\mathbb T}_m\to {\mathbb A}^m$, where ${\mathbb T}_m$ is a split $m$-dimensional algebraic torus, there exist a (unique) finite set $\{f_1,\ldots,f_m\}$ of irreducible polynomials in $m$ variables such that each coordinate function $\varphi^{-1}_k$ is an alternating product of these polynomials.
\end{Pro}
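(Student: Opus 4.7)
The plan is to identify the coordinate ring $k[\varphi(\mathbb{T}_m)]$ as an explicit localization of $k[x_1,\ldots,x_m]$ and then extract the $f_i$ by a unit-group count.

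First I would set $U := \varphi(\mathbb{T}_m) \subset \mathbb{A}^m$, open by hypothesis, with complement $Z := \mathbb{A}^m \setminus U$. I then list the irreducible components $Z_1,\ldots,Z_N$ of $Z$ of codimension one; since $k[x_1,\ldots,x_m]$ is a UFD, each $Z_i$ is cut out by an irreducible polynomial $f_i$, unique up to a scalar. The principal open set $D(f_1\cdots f_N)$ contains $U$ and differs from it only along a closed subset of codimension $\geqslant 2$ in the smooth (hence normal) variety $\mathbb{A}^m$, so the algebraic Hartogs phenomenon yields
\[
  k[U] \;=\; k[x_1,\ldots,x_m]\bigl[(f_1\cdots f_N)^{-1}\bigr].
\]

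The decisive step is a unit count inside this isomorphism. Through $\varphi^{\ast}$, the ring $k[U]$ is identified with $k[\mathbb{T}_m] = k[y_1^{\pm 1},\ldots,y_m^{\pm 1}]$, whose group of units modulo $k^{\times}$ is free abelian of rank $m$, generated by $y_1,\ldots,y_m$. On the other side, because $k[x_1,\ldots,x_m]$ is a UFD and the $f_i$ are distinct irreducibles, the units modulo $k^{\times}$ of the localization are free abelian of rank $N$, generated by $f_1,\ldots,f_N$. Matching ranks forces $N=m$, and writes each coordinate $\varphi^{-1}_k$, viewed as a unit of $k[U]$, as a scalar multiple of $f_1^{a_1^{(k)}}\cdots f_m^{a_m^{(k)}}$ for some integers $a_i^{(k)} \in \mathbb{Z}$ --- precisely an alternating product of the $f_i$.

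Uniqueness of $\{f_1,\ldots,f_m\}$ (up to scalars and reordering) is built into the construction, since the $f_i$ are characterized geometrically as the defining polynomials of the codimension-one irreducible components of $Z$. The only non-bookkeeping input is the Hartogs-type extension across closed subsets of codimension $\geqslant 2$, which is standard on the smooth variety $\mathbb{A}^m$; I do not foresee a serious obstacle here, so the main content really is the UFD rank comparison that pins down $N=m$.
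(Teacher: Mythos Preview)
Your proof is correct and takes a genuinely different route from the paper's. The paper argues directly from the UFD property: it first factors each coordinate $\varphi^{-1}_k$ into irreducible polynomials $F_1,\ldots,F_{m'}$, then shows by a point-evaluation contradiction that each $F_i\circ\varphi$ must be a monomial on $\mathbb{T}_m$ (if not, one could find $\tau\in\mathbb{T}_m$ with $F_1(\varphi(\tau))=0$ and the others nonzero, contradicting that some coordinate $t_\ell$ is a Laurent monomial in the $F_i$ with $F_1$ actually appearing), and finally argues $m'=m$ by observing that a multiplicative dependence among the $F_i\circ\varphi$ would force one $F_l$ to be reducible. Your approach instead identifies the $f_i$ geometrically as the codimension-one components of the boundary $Z$, uses Hartogs to compute $k[U]$ as an explicit localization, and then reads off both the form of $\varphi^{-1}_k$ and the count $N=m$ from a single unit-group rank comparison. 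Your argument is more conceptual and delivers uniqueness for free, while the paper's is more elementary in that it avoids the Hartogs extension step; both ultimately rest on the UFD structure of $k[\mathbb{A}^m]$, but you package the counting as a rank-of-units statement whereas the paper does it by an ad hoc dependence argument.
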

Just for refernces, we put a proof in the Appendix. To make use of this proposition, we will find an open embedding $L^{e,w}\to {\mathbb A}^m$. Denote by
\[
  U(w):=U\cap wU^{-1}w;\quad U^-(w):=U^{-}\cap w^{-1}Uw
\]
the {\em Schubert cell}. We have:
\begin{Lem}\cite{FZ,BZ01}
  The following map is an open embedding:
  \[
    \psi_w\colon L^{w,e}\to U^-(w)\ :\ x\mapsto [\overline{w}^{-1}x]_-.
  \]
\end{Lem}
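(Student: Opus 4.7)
The plan is to prove the lemma in three steps, analogous to the proof of Theorem~\ref{open}: verify well-definedness, construct an explicit rational inverse, and compare dimensions.

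First I would check well-definedness. For $x=u_1\overline{w}u_2\in L^{w,e}$ with $u_1,u_2\in U$, we have
\[
  \overline{w}^{-1}x=(\overline{w}^{-1}u_1\overline{w})\,u_2\in \overline{w}^{-1}U\overline{w}\cdot U\subset B^-U=G_0,
\]
so the Gaussian decomposition is defined. The root-subgroup factorization
\[
  \overline{w}^{-1}U\overline{w}=U^-(w)\cdot\bigl(U\cap\overline{w}^{-1}U\overline{w}\bigr),
\]
obtained by sorting positive roots $\alpha$ according to the sign of $w^{-1}\alpha$, lets us write $\overline{w}^{-1}u_1\overline{w}=A\cdot C$ with $A\in U^-(w)\subset U^-$ and $C\in U\cap\overline{w}^{-1}U\overline{w}\subset U$. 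Then $\overline{w}^{-1}x=A\cdot(Cu_2)$ is already in $U^-\cdot H\cdot U$ with trivial $H$-part, forcing $[\overline{w}^{-1}x]_-=A\in U^-(w)$.

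Next I would produce a rational inverse via the explicit formula
\[
  \psi_w^{-1}(u_-):=[\overline{w}u_-]_-\,[\overline{w}u_-]_0,
\]
defined on the open set of $u_-\in U^-(w)$ for which $\overline{w}u_-\in G_0$. The identity $\overline{w}u_-=(\overline{w}u_-\overline{w}^{-1})\overline{w}$, together with $\overline{w}u_-\overline{w}^{-1}\in U$, shows that $\psi_w^{-1}(u_-)=\overline{w}u_-\cdot[\overline{w}u_-]_+^{-1}\in U\overline{w}U$, while visibly $\psi_w^{-1}(u_-)\in U^-H=B^-$; hence $\psi_w^{-1}(u_-)\in L^{w,e}$. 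The relation $\psi_w\circ\psi_w^{-1}=\id$ is immediate, since $\overline{w}^{-1}\psi_w^{-1}(u_-)=u_-\cdot[\overline{w}u_-]_+^{-1}\in U^-\cdot U$ whose (trivially $H$-balanced) Gaussian decomposition has $[\,\cdot\,]_-=u_-$.

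Finally, both $L^{w,e}$ and $U^-(w)$ are smooth irreducible of dimension $\ell(w)$: transversality of $U\overline{w}U$ and $B^-$ in $G$ gives $\dim L^{w,e}=\ell(w)$, and $U^-(w)$ is the product of the $\ell(w)$ root subgroups for negative roots $\alpha$ with $w\alpha>0$. A regular morphism between smooth irreducible varieties of equal dimension that admits a regular inverse on a dense open subset is an open embedding, proving the lemma. The main obstacle I foresee is the verification in the second step that the candidate $\psi_w^{-1}(u_-)$ really lands in $U\overline{w}U$ (and not merely in $B^-$) and that the locus $\{\overline{w}u_-\in G_0\}$ is nonempty, so that $\psi_w$ is dominant; this is the Bruhat-cell bookkeeping that runs parallel to the proof of Theorem~\ref{open}, and is where the references \cite{FZ,BZ01} do the careful work.
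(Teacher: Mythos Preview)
Your approach differs from the paper's: the paper argues injectivity directly from uniqueness of the Gaussian decomposition together with the constraint $x\in B^-$, and then deduces openness by identifying $\psi_w$ with the composite of the open inclusion $L^{w,e}\hookrightarrow B_-\backslash(B_-wB_-)$ and the biregular isomorphism $B_-\backslash(B_-wB_-)\cong U^-(w)$ from \cite[Proposition~2.10]{FZ}. You instead construct an explicit rational inverse and appeal to a dimension count. Your route is more self-contained (no external structural result needed), while the paper's is shorter once one is willing to cite \cite{FZ}.

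There is, however, a genuine gap in your step~3. The principle ``a regular morphism between smooth irreducible varieties of equal dimension that admits a regular inverse on a dense open subset is an open embedding'' is false: the map $\mathbb{A}^2\to\mathbb{A}^2$, $(x,y)\mapsto(x,xy)$ is regular, both sides are smooth irreducible of dimension~$2$, and $(u,v)\mapsto(u,v/u)$ is a regular inverse on $\{u\neq 0\}$, yet the map collapses the line $x=0$ and is not an open embedding. What is missing is injectivity of $\psi_w$ on all of $L^{w,e}$.

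The fix is immediate with your own computation. From your step~1 you have $\overline{w}^{-1}x=A\cdot(Cu_2)$ with $A=\psi_w(x)$ and $Cu_2\in U$, hence $\overline{w}A=x\cdot(Cu_2)^{-1}$ is already a Gaussian decomposition (since $x\in B^-$), so $\psi_w^{-1}(\psi_w(x))=[\overline{w}A]_-[\overline{w}A]_0=x$. Thus $\psi_w^{-1}\circ\psi_w=\id$ on all of $L^{w,e}$, giving injectivity; equivalently, one argues as the paper does that $\psi_w(x)=\psi_w(x')$ forces $x'=xu$ with $u\in U\cap B^-=\{e\}$. With injectivity in hand, Zariski's Main Theorem (injective, hence quasi-finite, morphism to a normal target, birational by your section) yields the open embedding.
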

\begin{proof}
	Note that $\overline{w}^{-1}x\in w^{-1}UwU\subset B_-U$, thus $[\overline{w}^{-1}x]_-$ is well-defined and $[\overline{w}^{-1}x]_-\in U^{-}(w)$. The uniqueness of Gauss decomposition guarantees that $\psi_w$ is injective. The map is open follows from the biregular isomorphism $B_-\backslash (B_-wB_-)\cong U^-(w)$ by \cite[Proposition 2.10]{FZ} and the fact that $L^{w,e}$ is open in $B_-\backslash (B_-wB_-)$.
\end{proof}

Thus we get an open embedding $\psi_{w^{-1}}\circ \xi_{e,w}\colon L^{e,w}\to L^{w^{-1},e}\to U^-(w^{-1})$.

\begin{Lem}\cite[Proposition 2.11]{FZ}
  For a reduced word $\mathbf{i}\in R(w)$, the map below is a well-defined biregular isomorphism:
  \[
    \mathbb{T}^n\to U^-(w)\ :\ (t_1,\ldots,t_m)\mapsto \overline{w}^{-1}x_{i_1}(t_1)\overline{s_{i_1}}\cdots  x_{i_m}(t_m)\overline{s_{i_m}}.
  \]
\end{Lem}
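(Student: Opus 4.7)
My plan is to reduce the claim to the standard product parameterization of the unipotent radical $U^-(w)$ by its root subgroups. Set $w_k := s_{i_1}\cdots s_{i_k}$ and $\beta_k := w_{k-1}(\alpha_{i_k})$; since $\mathbf{i}$ is reduced, the $\beta_k$'s are distinct positive roots exhausting $R_+\cap w(R_-)$. The key identity, immediate from the defining $\SL_2$-relations for $\overline{s_i}$, is
\[
  \overline{w_{k-1}}\, x_{i_k}(t)\, \overline{w_{k-1}}^{-1} = x_{\beta_k}(\varepsilon_k\, t),\qquad \varepsilon_k\in\{\pm 1\},
\]
where $x_\beta(\cdot)$ denotes the one-parameter subgroup for the root $\beta$. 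Iteratively moving the Weyl lifts $\overline{s_{i_k}}$ to the right using this identity yields
\[
  \prod_{k=1}^m x_{i_k}(t_k)\,\overline{s_{i_k}} = x_{\beta_1}(\varepsilon_1 t_1)\cdots x_{\beta_m}(\varepsilon_m t_m)\cdot \overline{w}.
\]

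Next, I would conjugate by $\overline{w}^{-1}$ on the left. Computing $w^{-1}(\beta_k) = -s_{i_m}\cdots s_{i_{k+1}}(\alpha_{i_k}) =: -\gamma_k$ and observing that $(i_m,\ldots,i_1)\in R(w^{-1})$ gives that the $\gamma_k$'s are the distinct positive roots in $R_+\cap w^{-1}(R_-)$, so the $-\gamma_k$'s exhaust $R_-\cap w^{-1}(R_+)$, which is precisely the set of roots of $U^-(w) = U^-\cap w^{-1}Uw$. This produces
\[
  \overline{w}^{-1}\prod_{k=1}^m x_{i_k}(t_k)\,\overline{s_{i_k}} = x_{-\gamma_1}(\pm t_1)\cdots x_{-\gamma_m}(\pm t_m)\ \in\ U^-(w),
\]
which settles well-definedness and identifies the map as a product parameterization of $U^-(w)$ by its root subgroups.

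Biregularity then follows from the standard fact that, for any enumeration $\{\delta_1,\dots,\delta_m\}$ of the roots of a unipotent subgroup, the product map $(s_1,\dots,s_m)\mapsto x_{\delta_1}(s_1)\cdots x_{\delta_m}(s_m)$ is a biregular isomorphism onto that subgroup; this is proved by induction on $m$ using the commutation relations in the nilpotent Lie algebra to pass between orderings. Restricting the parameters to $\mathbb{G}_{\mathbf{m}}$ yields the claimed biregular isomorphism $\mathbb{T}^n \to U^-(w)$ onto the dense open subvariety where every factor is nonzero. The main technical subtlety is the careful bookkeeping of the signs $\varepsilon_k$ arising when conjugating a Chevalley generator by a Weyl lift, but these only produce a monomial reparameterization $t_k\mapsto\pm t_k$ and therefore do not affect any of the above arguments.
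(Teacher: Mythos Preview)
The paper does not supply its own proof of this lemma; it simply cites \cite[Proposition~2.11]{FZ}, so there is nothing to compare against directly. Your argument is the standard one and is correct: rewriting the product as $x_{\beta_1}(\pm t_1)\cdots x_{\beta_m}(\pm t_m)\,\overline{w}$ and then conjugating by $\overline{w}^{-1}$ identifies the map with the root-subgroup product parameterization of $U^-(w)$, whose biregularity is classical.

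One small point worth tightening: as you yourself observe at the end, with torus parameters $t_k\in\mathbb{G}_{\mathbf{m}}$ the image is only the dense open locus of $U^-(w)\cong\mathbb{A}^m$ where all root coordinates are nonzero, not all of $U^-(w)$. The original statement in \cite{FZ} is for affine parameters $t_k\in\mathbb{G}_{\mathbf{a}}$, which gives a genuine biregular isomorphism onto $U^-(w)$; the paper's formulation with $\mathbb{T}^n$ (and the mismatched indices $n$ versus $m$) is a minor slip. Your treatment handles both readings correctly, so this does not affect the validity of your argument.
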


Now combine these lemmas and proposition in beginning of this section, we get:

\begin{Thm}\label{invmap}
  Given a decorated double reduced word $(\mathbf{i},K,L)$ and $\mathbb{I}=\mathbb{I}(\mathbf{i},K,L)$, suppose $x\in L^{e,w}$ can be factored as  $x_{\mathbb{I}}(t_1,\ldots,t_n)$, then $t_k$'s are Laurent monomials of a unique set of regular functions $\{f_1,\ldots,f_n\}$, who are regular on $U^-(w^{-1})$ as well.
\end{Thm}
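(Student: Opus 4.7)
The plan is to realize the inverse of $x_{\mathbb{I}}$ as the inverse of an open embedding $\mathbb{T}^n\hookrightarrow \mathbb{A}^n$, so that Proposition \ref{inversemapapp} applies directly. What needs to be produced is such an affine embedding whose coordinate inverse recovers the $t_k$'s.

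First I would assemble the chain of open embeddings
\[
\mathbb{T}^n \xrightarrow{\tau_{\mathbb{I}}} L^{v_K,w_K} \xrightarrow{\xi^{v_K,w_K}} L^{e,w} \xrightarrow{\xi_{w^{-1},e}} L^{w^{-1},e} \xrightarrow{\psi_{w^{-1}}} U^-(w^{-1}).
\]
The first arrow is an open embedding by \cite[Proposition 4.5]{BZ01}. The second and third arrows are open embeddings from Theorem \ref{open}: for $\xi^{v_K,w_K}$, the length condition $\ell(v_K)+\ell(w_K)=\ell(v_K^{-1}w_K)=\ell(w)$ is precisely the compatibility condition \eqref{decor} satisfied by $(\mathbf{i},K)$, and for $\xi_{w^{-1},e}$ the length condition reduces to the trivial identity $\ell(w^{-1})+0=\ell(w)$. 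The fourth arrow is the open embedding $\psi_{w^{-1}}$ from the Lemma preceding Theorem \ref{invmap}. All of $L^{v_K,w_K}$, $L^{e,w}$, $L^{w^{-1},e}$, $U^-(w^{-1})$ have dimension $n=\ell(w)$, so the total composition $\Psi\colon\mathbb{T}^n\hookrightarrow U^-(w^{-1})$ is an open embedding.

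Next I would trivialize $U^-(w^{-1})\cong \mathbb{A}^n$ by fixing any reduced word $\mathbf{j}\in R(w^{-1})$ and applying the biregular parametrization of the Lemma recalled just before the statement. This yields an open embedding $\varphi\colon \mathbb{T}^n\hookrightarrow \mathbb{A}^n$. By construction and the injectivity of each step of the chain, evaluating $\varphi^{-1}$ on the image of $x_{\mathbb{I}}(t_1,\ldots,t_n)$ returns $(t_1,\ldots,t_n)$; in other words each $t_k$ equals the $k$-th component $\varphi^{-1}_k$ of the inverse of $\varphi$.

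Finally I would invoke Proposition \ref{inversemapapp} applied to $\varphi$: it delivers the unique finite set of irreducible polynomials $\{f_1,\ldots,f_n\}$ in $n$ variables such that each $\varphi^{-1}_k$ is an alternating (i.e.\ Laurent) monomial in the $f_i$'s, and hence so is each $t_k$. Under the identification $\mathbb{A}^n\cong U^-(w^{-1})$, polynomials in the affine coordinates become regular functions on $U^-(w^{-1})$, which gives the final assertion. The main technical point is to verify that each of the four intermediate morphisms really is an open embedding onto its image of the correct dimension; once the length conditions and dimension counts are checked, the proof reduces to an essentially formal application of Proposition \ref{inversemapapp}.
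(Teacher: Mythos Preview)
Your proposal is correct and follows essentially the same route as the paper: the paper's proof is just the sentence ``combine these lemmas and proposition in the beginning of this section,'' and you have spelled out exactly that combination, chaining $\tau_{\mathbb{I}}$, $\xi^{v_K,w_K}$, the passage $L^{e,w}\to L^{w^{-1},e}$, and $\psi_{w^{-1}}$, then trivializing $U^-(w^{-1})\cong\mathbb{A}^n$ and invoking Proposition~\ref{inversemapapp}. Your indexing $\xi_{w^{-1},e}$ for the third arrow is the correct reading of Theorem~\ref{open} (the paper's displayed $\xi_{e,w}$ appears to be a typo), and your verification of the length conditions is exactly what is needed.
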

\begin{Rmk}
	Note that for $K=L=[1,\ell(w)]$ or $K=L=\emptyset$, these function are given by twisted minors, see \cite{FZ,BZ01} for more details. We hope we can find explicit formulas in the general cases. 
\end{Rmk}

To related this to the cluster algebra structure on $L^{e,w}$, we have the following conjecture:
\begin{Con}
  The set $\{f_1,\ldots,f_n\}$ in Theorem \ref{invmap} is a cluster for $L^{e,w}$, {\em i.e.}, the fraction field generated by $\{f_1,\ldots,f_n\}$ is the function field of $L^{e,w}$.
\end{Con}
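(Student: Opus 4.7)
The conjecture admits two readings: the literal ``i.e.'' reading, in which it suffices to show $\{f_1,\ldots,f_n\}$ generates the function field $\mathbb{C}(L^{e,w})$, and a stronger reading (aligned with the earlier conjecture on $\mathbf{X}_\mathbb{I}$) asking that it be a cluster mutation-equivalent to the initial cluster of some reduced word. My plan treats both. For the weak reading, I would argue that Proposition \ref{positiveonU} gives an open embedding $\xi_\mathbb{I}\colon \mathbb{T}^n \to L^{e,w}$, hence an isomorphism $\xi_\mathbb{I}^*\colon \mathbb{C}(L^{e,w}) \xrightarrow{\sim} \mathbb{C}(t_1,\ldots,t_n)$. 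Theorem \ref{invmap} expresses each $t_k$ as a Laurent monomial in the $f_i$, so $\xi_\mathbb{I}^*(f_1),\ldots,\xi_\mathbb{I}^*(f_n)$ generate $\mathbb{C}(t_1,\ldots,t_n)$; the trivial transcendence-degree count $n \leq n$ then forces algebraic independence, and transporting along $\xi_\mathbb{I}^*$ gives the weak conjecture. So this part should follow essentially from results already in the paper, once it is pinned down that the composed embedding $\mathbb{T}^n \to L^{e,w} \hookrightarrow U^-(w^{-1}) \cong \mathbb{A}^n$ meets the hypotheses of Proposition \ref{inversemapapp} uniformly in the decoration.

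For the stronger cluster-theoretic reading, my plan is to reduce to the baseline cases $(\mathbf{i}, \emptyset, \emptyset)$ and $(\mathbf{i}, [1,n], [1,n])$, for which the associated double word $\mathbb{I}$ is $\mathbf{i}$ or $-\mathbf{i}^{\mathrm{op}}$. As the remark following Theorem \ref{invmap} indicates, in these cases the $f_i$ produced by Proposition \ref{inversemapapp} coincide (up to the involution $\iota$) with the twisted generalized minors of \cite{FZ,BZ01}, and these form a cluster in the Berenstein-Fomin-Zelevinsky cluster structure on double Bruhat cells. For a general decorated reduced word $(\mathbf{i}, K, L)$, the proof of Proposition \ref{positiveonU} shows that $\xi_\mathbb{I}$ is positively equivalent to a separable chart $\xi_{\mathbb{I}'}$, with the equivalence realised by commuting the negatively indexed factors $x_j$ leftward via \cite[Proposition 7.2]{BZ01}. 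I would then interpret this sequence of commutations as a sequence of cluster mutations: each elementary commutation corresponds to a Laurent monomial change of coordinates, and the uniqueness clause of Proposition \ref{inversemapapp} identifies the two sets $\{f_i\}$ and $\{f'_i\}$ up to units as the irreducible factors of the pull-backs of affine coordinates.

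The main obstacle is the non-simple move $(\tau_4)$ appearing in Lemma \ref{App:Lemma}: charts related by $(\tau_4)$ are only birationally, not biregularly, equivalent, and the transition between them is no longer a Laurent monomial transformation. Thus the matching of $\{f_i\}$ across $(\tau_4)$ cannot be handled by uniqueness alone, and the identification with cluster mutations must involve genuine (non-monomial) mutations. Making this step rigorous requires explicit formulas for the $f_i$ of a non-separable decorated word, generalizing twisted minors; equivalently one must track how the Newton polytopes of the $f_i$ transform under $(\tau_4)$ and check that the exponent matrix expressing $(t_1,\ldots,t_n)$ in $(f_1,\ldots,f_n)$ remains unimodular after such a move. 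I expect the bulk of any rigorous proof to lie in this factorization analysis, as it is precisely the mechanism by which Lemma \ref{App:Lemma} and Proposition \ref{App:prop} generate their large counts of inequivalent decorated words.
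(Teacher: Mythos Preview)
This statement is a \emph{Conjecture} in the paper; the authors do not prove it, so there is no paper proof to compare your proposal against.

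You are right that the literal ``i.e.'' reading---that $\{f_1,\ldots,f_n\}$ generates the function field of $L^{e,w}$---is already a consequence of the results in place: the open embedding $\xi_{\mathbb{I}}$ identifies $\mathbb{C}(L^{e,w})$ with $\mathbb{C}(t_1,\ldots,t_n)$, Theorem~\ref{invmap} writes each $t_k$ as a Laurent monomial in the $f_i$, and the $f_i$ are themselves rational on $L^{e,w}$, so the two fields coincide. In that sense the weak reading is a corollary rather than a conjecture, and the authors presumably intend the stronger cluster-theoretic assertion, parallel to the earlier conjecture that $\mathbf{X}_{\mathbb{I}}$ is mutation-equivalent to an initial cluster.

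For the strong reading your outline is a reasonable plan of attack but not a proof, and you have correctly located the obstruction. The simple moves $(\tau_1)$--$(\tau_3)$ induce only monomial coordinate changes (Lemma~\ref{App:Lem1} and \cite[Proposition~7.2]{BZ01}), so they do not produce new clusters; the substantive step is handling $(\tau_4)$ or, equivalently, non-separable decorations, where the transition is genuinely birational and one must identify it with an honest cluster mutation. Doing so requires explicit formulas for the $f_i$ generalizing the twisted minors of \cite{FZ,BZ01}, which the remark following Theorem~\ref{invmap} explicitly flags as unresolved. So your proposal for the strong reading is, appropriately, a sketch of what a proof would need rather than a proof---and that matches the status the paper assigns to the statement.
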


\section{Geometric Multiplicities}\label{Geometric Multiplicities}

In this section, we will introduce the category $\mul_G$ of geometric multiplicities, which is monoidal with a non-trivial associator (without unite). 

\subsection{\texorpdfstring{$U\times U$}{U*U} varieties and unipotent bicrystals}\label{Uuub}

In this section, we briefly recall the basic definitions about $U\times U$ varieties and unipotent bicrystals in \cite{BKII} and introduce the notion of trivializable unipotent bicrystals. 

\begin{Def}\label{uuvariety}
  A $U\times U$-{\em variety} ${\bm X}$ is a pair $(X,\alpha)$, where $X$ is an irreducible affine variety over $\mathbb{Q}$ and $\alpha\colon U\times X\times U\to X$ is a $U\times U$-action on $X$, such that group $U$ acts (both action) freely on $X$. The {\it convolution product} $*$ of $U\times U$-varieties ${\bm X}=(X,\alpha)$ and ${\bm Y}=(Y,\alpha')$ is ${\bm X}*{\bm Y}:=(X * Y,\beta)$, where the variety $X * Y$  is the quotient of $X\times Y$ by the following left action of $U$ on $X \times Y$:
  \[
    u(x,y)=(xu^{-1},u y).
  \]
  And the action $\beta\colon U\times X*Y\times U\to X*Y$ is defined by $u(x*y)u'=(ux)*(yu')$.
\end{Def}
\begin{Ex}
  It is clear that the group $G$ itself is a $U\times U$-variety with left and right multiplication as $U\times U$ action. Denote by
  \[
    G^{(n)}:=G*\cdots *G, \quad \text{for~} n\geqslant 2
  \]
the convolution product of $n$-copies of $G$'s.
\end{Ex}
\begin{Def}
  For a $U\times U$-variety ${\bm X}$ and $\chi\colon U\to \mathbb{A}^1$ a character, a function $\Phi$ on $X$ is $\chi$-{\em linear} if
  \begin{equation}
    \Phi(u\cdot x\cdot u')=\chi(u)+\Phi(x)+\chi(u'), \forall x\in X, u,u'\in  U.
  \end{equation}
  A {\em $(U\times U, \chi)$-bicrystal} is a triple $({\bm X},{\bm p}, \Phi)$, where  ${\bm X}$ is a $U\times U$-variety, and ${\bm p}\colon X\to G$ is a $U\times U$-equivariant morphism, and $\Phi$ is $\chi$- linear function. We refer to the pair $({\bm X},{\bm p})$ as {\em unipotent bicrystal}. The convolution product is defined by
  \[
    ({\bm X},{\bm p}, \Phi_X)*({\bm Y},{\bm p}', \Phi_{Y}):=({\bm X}*{\bm Y},{\bm p}'', \Phi_{X*Y}), 
  \]
  where ${\bm p}''\colon X*Y\to G$ is defined by ${\bm p}''(x*y)={\bm p}(x){\bm p}'(y)$ and $\Phi_{X*Y}(x*y)=\Phi_X(x)+\Phi_Y(y)$.
\end{Def}

\begin{Rmk}
  There are more $U\times U$-invariant functions on $G^{(n)}$ arising from the potential $\Phi_{BK}$, which we call {\em higher central charges}. See more details in Section \ref{hcc}.
\end{Rmk}

\begin{Ex}
	By Definition \ref{BKp0}, the BK potential $\Phi_{BK}$ is a $\chi^{\st}$-linear function on the $U\times U$ variety $G$. Therefore, the $U\times U$ variety $G^{(n)}$ is a $(U\times U,\chi^{\st})$-bicrystal with ${\bm p}\colon G^{(n)}\to G$ by sending $g_1*\cdots *g_n$ to $g_1\cdots g_n$, and the $\chi^{\st}$-linear function, or potential is given by
	\[
	  \Phi_{G^{(n)}}(g_1*\cdots*g_n):=\sum \Phi_{BK}(g_i).
	\]
\end{Ex}

\begin{Def}
	The {\em highest weight map} $\hw$ of $G$ is the following $U\times U$-invariant rational morphism
\begin{equation}\label{eq:pi0}
  \hw\colon Bw_0B\to H \ : \ uh\overline{w_0}u'\mapsto h.
\end{equation}
\end{Def}

To a $(U\times U,\chi^{\st})$-bicrystal $({\bm X}, {\bm p}, \Phi)$, the {\em central charge} of $({\bm X},{\bm p},\Phi)$ is the $U\times U$-invariant function:
\begin{equation}\label{eq:Delta}
  \Delta_X(x):=\Phi(x)-\Phi_{BK}({\bm p}(x)), \forall x\in X.
\end{equation}
Assume that $U\backslash X/U$ is an affine variety in the following. Since both $\Delta_X$ and $\hw_X:=\hw\circ {\bm p}$ are $U\times U$-invariant, they descents to functions $\overline{\Delta}_X$ and $\overline{\hw}_{X}$ on $U\backslash X/U$ respectively. Now consider the affine variety:
\[
	\widetilde{X}:=U\backslash X/U\times_H G,
\]
where the fiber product is over $\overline{\hw}_X$ and $\hw$. The variety $\widetilde{X}$ gets an $U\times U$ action on $G$:
\[
  u\cdot (\overline{x},g) \cdot u'\mapsto (\overline{x}, ugu').
\] 
Define a $\chi^{\st}$-linear function $\widetilde{\Phi}$ on $\widetilde{X}$ by
\[
  \widetilde{\Phi}(\overline{x},g):=\overline{\Delta}_X(\overline{x})+ \Phi_{BK}(g), \quad \text{~for~} (\overline{x}, g)\in U\backslash X/U\times_H G, 
\]
Denote by $p_2$ is the projection $\widetilde{X}$ to the second factor $G$. All these make the triple $(U\backslash X/U\times_H G,\widetilde{\Phi}, p_2)$ into a  unipotent bicrystal.
\begin{Def}
	A $(U\times U,\chi^{\st})$-bicrystal $({\bm X},{\bm p},\Phi)$ is {\em trivializable} if the following map is a birational isomorphism of $(U\times U,\chi^{\st})$-bicrystals
	\begin{equation}\label{trivializable}
	  \varphi\colon X\xrightarrow{\sim} U\backslash X/U\times_H G \ : \ x\mapsto (\overline{x},{\bm p}(x)).
	\end{equation}
	Denote by $\mathbf{TriUB}_G$ the category of trivializable $(U\times U,\chi^{\st})$-bicrystals over $G$.
\end{Def}
The following proposition shows that $U\backslash X/U$ can be realized a subvariety of $X$:
\begin{Pro}\label{Thm5}
	For a trivializable $(U\times U,\chi^{\st})$-bicrystal $({\bm X},{\bm p},\Phi)$, the following natural map is a birational isomorphism of varieties
  	\[
    	{\bm p}^{-1}\left(\phi(H)\right)\to  U\backslash X/U.
  	\]
  	where $\phi\colon H\to G$ is the natural rational lift of $\hw\colon G\to H$ given by $ \phi(h)=h\overline{w_0}\in Bw_0B\subset G$. Moreover, we have $X\cong Y\times_H G\cong U\backslash X/U\times_H G$.
\end{Pro}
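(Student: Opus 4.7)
My plan is to transfer everything through the trivialization $\varphi\colon X\xrightarrow{\sim} U\backslash X/U\times_H G$, inside which the geometry becomes transparent. Writing $\overline{X}:=U\backslash X/U$, the map ${\bm p}$ corresponds under $\varphi$ to the second projection $p_G\colon \overline{X}\times_H G\to G$. Hence, up to the birational equivalence provided by $\varphi$, it suffices to show that the first projection
\[
    p_{\overline{X}}\colon p_G^{-1}(\phi(H))\longrightarrow \overline{X}
\]
is a birational isomorphism.

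To construct the inverse I would unpack the fibre-product condition. A point $(\overline{x},g)\in \overline{X}\times_H G$ satisfies $\overline{\hw}_X(\overline{x})=\hw(g)$; if in addition $g\in \phi(H)$, write $g=h\overline{w_0}$, so that $\hw(g)=h$. Combining these forces $h=\overline{\hw}_X(\overline{x})$ and therefore $g=\phi(\overline{\hw}_X(\overline{x}))$ is uniquely determined by $\overline{x}$ on the dense open subset of $\overline{X}$ where $\overline{\hw}_X$ is defined. This produces the candidate inverse
\[
    \sigma\colon \overline{X}\dashrightarrow p_G^{-1}(\phi(H)), \qquad \overline{x}\longmapsto \bigl(\overline{x},\, \phi(\overline{\hw}_X(\overline{x}))\bigr),
\]
which is well defined rationally because $\phi$ is regular on $H$ and $\overline{\hw}_X$ is rational on $\overline{X}$. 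The identities $p_{\overline{X}}\circ \sigma=\id$ (immediate) and $\sigma\circ p_{\overline{X}}=\id$ (the uniqueness just established) will then confirm that $p_{\overline{X}}$ is a birational isomorphism; composing with $\varphi$ yields the first assertion.

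For the moreover part I would set $Y:={\bm p}^{-1}(\phi(H))$ and equip it with the structure map $\hw\circ {\bm p}|_Y\colon Y\to H$. Because $\hw\circ\phi=\id_H$, the birational isomorphism $Y\cong \overline{X}$ just obtained is actually a morphism over $H$, so the trivializability isomorphism $X\cong \overline{X}\times_H G$ transports directly to $X\cong Y\times_H G$.

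The main obstacle I anticipate is the bookkeeping of the various dense open loci involved: where $\overline{\hw}_X$ is defined on $\overline{X}$, where $\varphi$ and $\varphi^{-1}$ are biregular, and where ${\bm p}(x)$ genuinely lands in $\phi(H)\subset Bw_0B$ (so that $\hw\circ {\bm p}$ makes sense). Since $X$ is irreducible and each of these loci is independently dense, their intersection is automatically dense, so this obstacle is notational rather than substantive; the content of the proposition is already encoded in the trivializability condition \eqref{trivializable}.
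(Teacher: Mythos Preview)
Your argument is correct. The paper takes a slightly different, more intrinsic route: rather than transferring through the trivialization $\varphi$ and building the explicit rational inverse $\sigma$, it argues directly that each $U\times U$-orbit in $X$ meets $Y={\bm p}^{-1}(\phi(H))$ in exactly one point (this follows from the freeness of the $U\times U$-action and the uniqueness of the factorization $Bw_0B=U\cdot H\overline{w_0}\cdot U$), which immediately gives the bijection $Y\to U\backslash X/U$ at the level of orbits. In particular the paper's argument does not explicitly invoke the trivializability hypothesis at this step, whereas your approach leans on it from the outset. What your route buys is an explicit formula for the inverse and a cleaner audit trail for the birational domain issues you flag at the end; what the paper's route buys is brevity and a statement that visibly depends only on the $U\times U$-equivariance of ${\bm p}$ and the Bruhat decomposition of $G$, with trivializability entering only in the final identification $X\cong Y\times_H G$.
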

\begin{proof}
	Denote by $Y:={\bm p}^{-1}$. Note that each $U\times U$-orbit in $X$ intersects $Y$ at exactly one point. Thus we have the following and commuting diagram
  \[
    Y\xrightarrow{\sim} U\backslash X/U; \quad
    \begin{tikzcd}
      Y \arrow[r] \arrow[d] & X \arrow[d] \\
      H \arrow[r] & G
    \end{tikzcd}\ . 
  \]
  Then $X\cong Y\times_H G\cong U\backslash X/U\times_H G$.
\end{proof}

In what follows, we will describe a `trivialization' of $G^{(2)}$ and extend it to $G^{(n)}$ in the next section. Define a rational map $\pi$ on $G^{(2)}$ as
\begin{equation}\label{factors}
  \pi\colon G^{(2)}\to U \ : \ (g_1, g_2)\mapsto v_1u_2, \quad \text{~where~} g_1=u_1h_1\overline{w_0}v_1, \quad g_2=u_2h_2\overline{w_0}v_2.
\end{equation}
\begin{Pro}\cite[Proposition 2.42]{BKII}
  The following map is a birational isomorphism of varieties:
  \begin{equation}\label{trivilization}
    F\colon G^{(2)} \to M^{(2)}\times_H G \ :\ (g_1,g_2)\mapsto \left( \pi(g_1,g_2), \hw(g_1), \hw(g_2); g_1g_2\right),
  \end{equation}
  where the fiber product $M^{(2)}\times_H G=(U\times H^2)\times_H G$ is over
  \[
      \hw_2\colon (u,h_1,h_2)\mapsto \hw(\overline{w_0}u\overline{w_0})h_1h_2, \text{~and~} \hw\colon g\mapsto \hw(g).
  \]
\end{Pro}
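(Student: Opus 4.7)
The plan is to verify in three steps that $F$ descends to the quotient $G^{(2)}=(G\times G)/U$, satisfies the fiber-product compatibility, and admits a rational inverse, making it a birational isomorphism. On the dense open subset where $g_1,g_2\in Bw_0B$, the factorizations $g_i=u_ih_i\overline{w_0}v_i$ are unique, and $\pi(g_1,g_2)=v_1u_2$, $\hw(g_i)=h_i$, and $g_1g_2$ are all rational in $(g_1,g_2)$. The action $u\cdot(g_1,g_2)=(g_1u^{-1},ug_2)$ transforms $(u_1,h_1,v_1)\mapsto(u_1,h_1,v_1u^{-1})$ and $(u_2,h_2,v_2)\mapsto(uu_2,h_2,v_2)$, leaving $v_1u_2$, $h_1$, $h_2$, and $g_1g_2$ invariant; hence $F$ descends to a rational map on $G^{(2)}$.

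For the fiber-product compatibility $\hw(g_1g_2)=\hw_2(\pi(g_1,g_2),\hw(g_1),\hw(g_2))$, set $u:=v_1u_2\in U$ and expand
\[
  g_1g_2 \;=\; u_1h_1\,\overline{w_0}\,u\,h_2\overline{w_0}\,v_2.
\]
On the locus where $\overline{w_0}u\overline{w_0}\in Bw_0B$, decompose $\overline{w_0}u\overline{w_0}=ah\overline{w_0}b$ with $h=\hw(\overline{w_0}u\overline{w_0})$. Using the identity $h'\overline{w_0}=\overline{w_0}\,w_0(h')$ for $h'\in H$ (where $w_0(h'):=\overline{w_0}^{-1}h'\overline{w_0}$), one obtains
\[
  \overline{w_0}\,u\,h_2\overline{w_0} \;=\; \overline{w_0}u\overline{w_0}\cdot w_0(h_2)\;=\;a\,h\,\overline{w_0}\,b\,w_0(h_2) \;=\; a(hh_2)\overline{w_0}\,b',
\]
for $b':=w_0(h_2)^{-1}bw_0(h_2)\in U$. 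Substituting and conjugating $a$ past $h_1$ yields the Bruhat form
\[
  g_1g_2=(u_1\cdot h_1ah_1^{-1})\,(h_1hh_2)\,\overline{w_0}\,(b'v_2),
\]
from which $\hw(g_1g_2)=h_1hh_2=\hw(\overline{w_0}u\overline{w_0})\,h_1h_2=\hw_2(u,h_1,h_2)$, as required.

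The birational inverse is obtained by reversing this factorization. Given a generic point $(u,h_1,h_2;g)$ of $M^{(2)}\times_HG$, decompose $g=u^*h^*\overline{w_0}v^*$ (the fiber-product condition forces $h^*=h_1\hw(\overline{w_0}u\overline{w_0})h_2$), compute $a,b'$ from $u,h_2$ as above, and set $u_1:=u^*\cdot h_1a^{-1}h_1^{-1}$ and $v_2:=b'^{-1}v^*$. Any splitting $u=v_1u_2$ (e.g.\ $v_1=e$, $u_2=u$) then produces preimages $g_1=u_1h_1\overline{w_0}v_1$, $g_2=u_2h_2\overline{w_0}v_2$; distinct splittings differ by the $U$-action $(v_1,u_2)\mapsto(v_1u_0^{-1},u_0u_2)$ and hence define the same class in $G^{(2)}$. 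Dimensions match via the open cell $G=U^-HU$:
\[
\dim G^{(2)}=2\dim G-\dim U=3\dim U+2\dim H=\dim U+\dim H+\dim G=\dim(M^{(2)}\times_HG),
\]
confirming birationality. The main technical delicacy is identifying the dense open subset on which both $g\in Bw_0B$ and $\overline{w_0}u\overline{w_0}\in Bw_0B$ so that all the rational Bruhat decompositions above are simultaneously defined.
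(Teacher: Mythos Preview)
Your argument is correct. The paper itself does not prove this proposition at the point where it is stated---it is quoted from \cite[Proposition 2.42]{BKII}---but an explicit inverse of $F$ (restricted to $B^-\times B^-$) is written out later in the proof of Theorem~\ref{positiveasso}, and it is worth comparing your construction to that one.

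Your inverse proceeds by Bruhat-decomposing $\overline{w_0}u\overline{w_0}=ah\overline{w_0}b$ and then reading off $u_1$ and $v_2$ from the Bruhat decomposition of $g$. The paper's inverse instead writes $y=g_1g_2=u_1\overline{w_0}\,h_1^{w_0}uh_2\,\overline{w_0}v_2$, sets $x=h_1^{w_0}uh_2$, and extracts $v_2=[\overline{w_0}^{-1}y]_+[x\overline{w_0}]_+^{-1}$ directly via the Gauss $[\cdot]_+$ projection; the companion formula for $u_1$ is obtained by applying the positive anti-automorphism $\iota$. The two constructions are equivalent (your $a,b'$ are essentially the Gauss pieces of $x\overline{w_0}$ after an $H$-conjugation), but the paper's version has the advantage that each piece of the inverse is visibly a composition of the standard positive maps $g\mapsto[\overline{w_0}^{-1}g]_+$, $\iota$, and $H$-translations, which is exactly what is needed for the subsequent positivity statement. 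Your version, by contrast, makes the fiber-product compatibility $\hw(g_1g_2)=\hw_2(u,h_1,h_2)$ more transparent.

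One small point: in your inverse you remark that any splitting $u=v_1u_2$ works and different choices give the same class in $G^{(2)}$. This is fine, but note that the paper's formulas actually pin down a \emph{specific} $(v_1,u_2)$ via $v_1=[u_1h_1\overline{w_0}]_+^{-1}$ and $u_2^\iota=[v_2^\iota\overline{w_0}h_2^{-1}]_+^{-1}$, which amounts to choosing the representative $(g_1,g_2)\in B^-\times B^-$ rather than an arbitrary section of the $U$-quotient.
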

Since $F$ is an $U\times U$-invariant isomorphism, we get an isomorphism of affine varieties
\begin{equation}\label{eq:identification}
  \overline{F}\colon U\backslash G^{(2)}/U \to M^{(2)}.
\end{equation}
Thus the central charge $\Delta_2$ on $G^{(2)}$ descends to a function on $M^{(2)}$:
\begin{equation}
   \overline{\Delta}_2:=\Delta_2\circ \overline{F}^{-1}.
\end{equation}
The following corollary is clear:
\begin{Cor}
  The isomorphism $F$ defined in \eqref{trivilization} is an isomorphism of varieties with potential:
  \[
    F\colon \left(G^{(2)}, \Phi_{G^{(2)}} \right) \to \left( M^{(2)}\times_H G, \overline{\Delta}_2+\Phi_{BK}\right).
  \] 
\end{Cor}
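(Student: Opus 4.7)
The plan is to leverage the fact that $F$ is already known to be a birational isomorphism of the underlying varieties (this is the preceding Proposition from \cite{BKII}), so the corollary reduces to verifying the single identity
\[
F^{*}\bigl(\overline{\Delta}_2+\Phi_{BK}\bigr)=\Phi_{G^{(2)}}
\]
of rational functions on $G^{(2)}$. Here the function $\Phi_{BK}$ on $M^{(2)}\times_H G$ is understood as the pullback along the projection $\mathrm{pr}_G\colon M^{(2)}\times_H G\to G$, and $\overline{\Delta}_2$ as the pullback along $\mathrm{pr}_{M^{(2)}}$.

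First I would unwind the definition of $\overline{\Delta}_2$: by construction (see \eqref{eq:identification} and its sequel) $\overline{\Delta}_2:=\Delta_2\circ \overline{F}^{-1}$, i.e.\ $\Delta_2=\overline{\Delta}_2\circ \overline{F}$ as $U\times U$-invariant functions, where $\overline{F}\colon U\backslash G^{(2)}/U\xrightarrow{\sim} M^{(2)}$ is the induced isomorphism of quotients. Writing $q\colon G^{(2)}\to U\backslash G^{(2)}/U$ for the quotient map, the explicit formula \eqref{trivilization} shows that
\[
F=\bigl(\overline{F}\circ q,\ \mathbf{p}\bigr),\qquad \mathbf{p}(g_1*g_2)=g_1g_2.
\]
Combining this with the defining identity \eqref{eq:Delta} of the central charge applied to the $(U\times U,\chi^{\st})$-bicrystal $G^{(2)}$, one gets
\[
\Delta_2(g_1,g_2)=\Phi_{G^{(2)}}(g_1,g_2)-\Phi_{BK}(g_1g_2)=\Phi_{BK}(g_1)+\Phi_{BK}(g_2)-\Phi_{BK}(g_1g_2).
\]

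Finally, I would pull back the potential through $F$:
\[
F^{*}\bigl(\overline{\Delta}_2+\Phi_{BK}\bigr)(g_1,g_2)=\overline{\Delta}_2\bigl(\overline{F}(q(g_1*g_2))\bigr)+\Phi_{BK}(g_1g_2)=\Delta_2(g_1,g_2)+\Phi_{BK}(g_1g_2),
\]
and the previous display collapses this to $\Phi_{BK}(g_1)+\Phi_{BK}(g_2)=\Phi_{G^{(2)}}(g_1,g_2)$, which is exactly what is required.

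There is no real obstacle: the work has already been done in the preceding Proposition (that $F$ is a birational isomorphism of varieties) and in the $U\times U$-invariance of $\Delta_2$ (which makes the descent $\overline{\Delta}_2$ well defined on $M^{(2)}$). The present corollary is a straightforward bookkeeping exercise: the central charge $\Delta_2$ was precisely designed to be the discrepancy $\Phi_{G^{(2)}}-\Phi_{BK}\circ\mathbf{p}$, and adding $\Phi_{BK}\circ\mathrm{pr}_G$ back on the target side restores $\Phi_{G^{(2)}}$.
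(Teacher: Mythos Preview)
Your proposal is correct and is exactly the routine verification the paper leaves implicit (the corollary is stated as ``clear'' with no proof). You unwind the definitions of $\overline{\Delta}_2$ and the central charge $\Delta_2$ and observe that they were designed precisely so that $F^{*}(\overline{\Delta}_2+\Phi_{BK})=\Delta_2+\Phi_{BK}\circ\mathbf{p}=\Phi_{G^{(2)}}$; there is nothing more to it.
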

From now on, we refer to \eqref{trivilization} as a trivialization of $G^{(2)}$.

\subsection{The model space \texorpdfstring{$M^{(2)}$}{M{(2)}} }

In what follows, we will look in more details at the variety $M^{(2)}$, which is the key to the category of geometric multiplicities.

Denote by $M^{(n)}:=U^{n-1}\times H^n$ for $n\geqslant 2$. Note that by the usage of \eqref{trivilization}, for each vertex $a$ of associahedron $K_n$ for any $n\geqslant 3$, one can get a trivialization $F_a\colon G^{(n)}\to M^{(n)}\times_H G$.  For $n=3$, we have the
\[
   F_{12,3}\colon (G*G)*G\xrightarrow{F\times \id_G} \left(M^{(2)}\times_H G\right)* G\xrightarrow{\id_{M^{(2)}}\times F}M^{(2)}\times_H\left( M^{(2)}\times_H G\right) \xrightarrow{\sim}M^{(3)}\times_{H} G,
\]
and
\[
   F_{1,23}\colon G*(G*G)\xrightarrow{\id_G\times F} G*\left(G\times_H M^{(2)}\right)\xrightarrow{F\times \id_{M^{(2)}}} \left( G\times_H M^{(2)} \right)\times_H M^{(2)} \xrightarrow{\sim}M^{(3)}\times_{H} G, 
\]
where the fiber product $M^{(3)}\times_{H} G=(U^2\times H^3)\times_{H} G$ is over
\[
    \hw_{3}\colon (u_1,u_2,h_1,h_2,h_3)\mapsto \hw(\overline{w_0}u_1\overline{w_0})\hw(\overline{w_0}u_2\overline{w_0})h_1h_2h_3, \text{~and~} \hw\colon g\mapsto \hw(g).
\]
Hence we get the a non-trivial automorphism of $U\times U$-varieties
\[
  F_{1,23}\circ F_{12,3}^{-1}\colon M^{(3)}\times_{H} G\xrightarrow{\sim} M^{(3)}\times_{H} G. 
\]
Denote by $\pr_{M^{(3)}}$ and $\pr_G$ the natural projection of $ M^{(3)}\times_{H} G$ to $M^{(3)}$ and $G$ respectively. Then we see immediately that $\pr_G\circ F_{1,23}\circ F_{12,3}^{-1}(m,g)=g$. Note that $F_{12,3}$ and $F_{1,23}$ are $(U,U)$-equivariant, thus they descents to maps $\overline{F}_{1,23}$ and $\overline{F}_{12,3}$ respectively. By Proposition \ref{Thm5}, we have the following birational isomorphism:
\[
	\iota_{M^{(3)}}\colon M^{(3)} \to M^{(3)}\times_{H} U\backslash G/U\ :\ m\mapsto \left(m, U\hw_{3}(m)\overline{w_0}U\right).
\]
Assembling all these components, we get:
\[
  \Psi\colon M^{(3)}\xrightarrow{\iota_{M^{(3)}}} M^{(3)}\times_{H} U\backslash G/U \xrightarrow{\overline{F}_{12,3}^{-1}}U\backslash G^{(3)}/U\xrightarrow{\overline{F}_{1,23}} M^{(3)}\times_{H} U\backslash G/U \xrightarrow{\pr_{M^{(3)}}} M^{(3)}.
\]
\begin{Pro}\label{formforasso}
  The map $\Psi$ defined above is an automorphism of variety $U^2\times H^3$:
  \[
    \Psi=\pr_{M^{(3)}}\circ \overline{F}_{1,23}\circ \overline{F}_{12,3}^{-1}\circ\iota_{M^{(3)}}\colon U^2\times H^3\to U^2\times H^3.
  \]
\end{Pro}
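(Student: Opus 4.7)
My plan is to verify that $\Psi$ is well-defined and then exhibit its inverse, concluding that it is a birational automorphism of $M^{(3)} = U^2 \times H^3$.

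First, I will observe that each of the two trivializations $F_{12,3}$ and $F_{1,23}$ satisfies $\pr_G \circ F_{\bullet} = \bm{p}$. This is because each is assembled by tensoring the basic trivialization $F$ of $G^{(2)}$ (which intertwines the projection to $G$ with $\bm{p}$) with the identity and then using the associativity isomorphisms of $\times_H$; none of those manipulations change the image in $G$, which is the product $g_1 g_2 g_3$ in both bracketings.  Passing to the $U \times U$-quotients, the induced birational isomorphisms $\overline{F}_{12,3}$ and $\overline{F}_{1,23}$ from $U \backslash G^{(3)} / U$ to $M^{(3)} \times_H U\backslash G/U$ both intertwine the projections to $U\backslash G/U$.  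Hence the composition
\[
  \overline{F}_{1,23} \circ \overline{F}_{12,3}^{-1}\colon M^{(3)} \times_H U\backslash G/U \xrightarrow{\sim} M^{(3)} \times_H U\backslash G/U
\]
is a birational automorphism preserving the $U\backslash G/U$-coordinate; in particular the fiber-product condition $\hw_3 = \hw$ continues to hold, so projecting to $M^{(3)}$ is unambiguous.

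Second, I will invoke Proposition \ref{Thm5} applied to the trivializable bicrystal $G^{(3)}$: the rational section $\phi\colon H \to G$ of $\hw$, namely $\phi(h)=h\overline{w_0}$, yields a birational section of the projection $M^{(3)} \times_H U\backslash G/U \to M^{(3)}$, which is precisely $\iota_{M^{(3)}}$.  Thus $\pr_{M^{(3)}} \circ \iota_{M^{(3)}} = \id_{M^{(3)}}$ and $\iota_{M^{(3)}}$ is a birational isomorphism with inverse $\pr_{M^{(3)}}$.  Writing $\Psi = \iota_{M^{(3)}}^{-1} \circ \bigl(\overline{F}_{1,23} \circ \overline{F}_{12,3}^{-1}\bigr) \circ \iota_{M^{(3)}}$ exhibits $\Psi$ as a composition of birational isomorphisms, hence a birational automorphism of $U^2 \times H^3$, with explicit inverse
\[
  \Psi^{-1} = \pr_{M^{(3)}} \circ \overline{F}_{12,3} \circ \overline{F}_{1,23}^{-1} \circ \iota_{M^{(3)}}.
\]

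The main obstacle is not any one identity but the careful bookkeeping of rational domains: each of $F$, $\overline{F}_{12,3}$, $\overline{F}_{1,23}$, $\iota_{M^{(3)}}$ is only birational, so one must check that the composition is defined on a nonempty open subset of $M^{(3)}$ and that generic fibers of $\pr_{M^{(3)}}: M^{(3)} \times_H U\backslash G/U \to M^{(3)}$ really are singletons (which holds because, generically, the $U \times U$-orbits in the Bruhat cell $Bw_0 B$ are separated by $\hw$).  Once this is verified, the conceptual content --- namely that $F_{12,3}$ and $F_{1,23}$ agree on the $G$-factor --- makes the rest of the argument automatic; computing $\Psi$ explicitly in coordinates, which is what will actually feed the associator of $\mul_G$, is the substantive next step but lies beyond the scope of this proposition.
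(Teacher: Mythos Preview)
Your proposal is correct and follows essentially the same route as the paper: the paper does not give a formal proof of this proposition, but the discussion immediately preceding it records exactly the ingredients you use --- that $\pr_G\circ F_{1,23}\circ F_{12,3}^{-1}(m,g)=g$, and that Proposition~\ref{Thm5} identifies $\iota_{M^{(3)}}$ as a birational section of $\pr_{M^{(3)}}$ --- from which the claim follows. Your write-up is in fact more explicit than the paper about why preserving the $U\backslash G/U$-coordinate lets one replace $\pr_{M^{(3)}}$ by $\iota_{M^{(3)}}^{-1}$ in the composite, and about the birational-domain bookkeeping.
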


\begin{Ex}\label{mainEx}
	Here we work out one example of the map $\Psi$ for $G=\GL_2$. Write
	\[
		x_i:=\begin{bmatrix}
			a_i & 0\\
			b_i & c_i
		\end{bmatrix}
	\]
	as coordinates for $B_-$. Suppose that $b_i\in \mathbb{G}_{\mathbf{m}}$, then we have
	\[
		\begin{bmatrix}
			a_i & 0\\
			b_i & c_i
		\end{bmatrix}=
		\begin{bmatrix}
			1 & \dfrac{a_i}{b_i}\\
			0 & 1
		\end{bmatrix}
		\begin{bmatrix}
			\dfrac{a_ic_i}{b_i} & 0\\
			0 & b_i
		\end{bmatrix}
		\begin{bmatrix}
			0 & -1\\
			1 & 0
		\end{bmatrix}
		\begin{bmatrix}
			1 & \dfrac{c_i}{b_i}\\
			0 & 1
		\end{bmatrix},
	\]
	Note the map $F_{12,3}$ restrict to $(B_-\times B_-)\times B_-\xrightarrow{\sim} M^{(3)}\times_H B_-$, which sends $(x_1,x_2,x_3)$ to 
	\[
		\left(
		\begin{bmatrix}
			1 & \dfrac{1}{X_{12}b_1b_2}\\
			0 & 1
		\end{bmatrix},
		\begin{bmatrix}
			1 & \dfrac{X_{12}}{b_3Y}\\
			0 & 1
		\end{bmatrix},
		\begin{bmatrix}
			\dfrac{a_1c_1}{b_1} & 0\\
			0 & b_1
		\end{bmatrix},
		\begin{bmatrix}
			\dfrac{a_2c_2}{b_2} & 0\\
			0 & b_2
		\end{bmatrix},
		\begin{bmatrix}
			\dfrac{a_3c_3}{b_3} & 0\\
			0 & b_3
		\end{bmatrix},
		x_1x_2x_3
		\right), 
	\]
	where $X_{ij}^{-1}=b_ia_j+c_ib_j$ and $Y^{-1}=b_1a_2a_3+c_1b_2a_3+c_1c_2b_3$. For elements in $M^{(3)}\times_H B_-$, we use 
	\[
		\left(
		\begin{bmatrix}
			1 & u_1\\
			0 & 1
		\end{bmatrix},
		\begin{bmatrix}
			1 & u_2\\
			0 & 1
		\end{bmatrix},
		\begin{bmatrix}
			e_1 & 0\\
			0 & f_1
		\end{bmatrix},
		\begin{bmatrix}
			e_2 & 0\\
			0 & f_2
		\end{bmatrix},
		\begin{bmatrix}
			e_3 & 0\\
			0 & f_3
		\end{bmatrix},
		\begin{bmatrix}
			p & 0\\
			u_1u_2\prod f_i & p^{-1}\prod e_i \prod f_i
		\end{bmatrix}
		\right)
	\]
	as coordinates. Thus the inverse map of $F_{12,3}$ is given by
	\begin{align*}
		c_3&=\frac{u_1e_3+p^{-1}f_3\prod e_i}{u_1u_2};&  a_3&=e_3f_3c_3^{-1};&  b_3&=f_3;\\
		c_2&=\frac{e_2}{u_1}(1+p^{-1}e_1f_2a_3);& a_2&=e_2f_2c_2^{-1}; &  b_2&=f_2;\\
		a_1&=\frac{e_1}{u_1}(1+pe_1^{-1}f_2^{-1}a_3^{-1});&  c_1&=e_1f_1a_1^{-1}; &  b_1&=f_1.
	\end{align*}
	Then applying $F_{1,23}$, we get
	\[
		\left(
		\begin{bmatrix}
			1 & \dfrac{e_2}{u_1f_2}+u_2\\
			0 & 1
		\end{bmatrix},
		\begin{bmatrix}
			1 & \dfrac{u_1u_2f_2}{u_1^{-1}e_2+f_2u_2}\\
			0 & 1
		\end{bmatrix},
		\begin{bmatrix}
			e_1 & 0\\
			0 & f_1
		\end{bmatrix},
		\begin{bmatrix}
			e_2 & 0\\
			0 & f_2
		\end{bmatrix},
		\begin{bmatrix}
			e_3 & 0\\
			0 & f_3
		\end{bmatrix},
		x_1x_2x_3
		\right).
	\]
	Thus the map $\Psi$ is just then map send 
	\[
		(u_1,u_2,e_i,f_i) \to \left(\dfrac{e_2}{u_1f_2}+u_2,\dfrac{u_1u_2f_2}{u_1^{-1}e_2+f_2u_2}, e_i,f_i\right).
	\]
\end{Ex}

\subsection{The category of geometric multiplicities}

Now we are ready to define the category $\mul_G$ of geometric multiplicities. 
\begin{Def}\label{mulg}
  For any semisimple group $G$, the category $\mul_G$ of geometric multiplicities is:
   \begin{itemize}
      \item The object in $\mul_G$ is a quadruple ${\bm M}=(M,\Phi_M,\hw_M,\pi_M)$, which consists of an irreducible affine variety $M$ with potential $\Phi$, a rational map $\hw_M\colon M\to H$ from $M$ to the Cartan subgroup $H$ of $G$ and a rational map $\pi_M\colon M\to S_M$ from $M$ to a split torus $S_M$;

      \item A morphism ${\bm f}\colon {\bm M}\to {\bm N}$ is a triple of rational maps $f_1\colon M\to N $, and $f_2\colon H\to H$ and $f_3\colon S_M\to S_N$ s.t. $\hw_N\circ f_1=f_2\circ \hw_M$ and $\pi_N\circ f_1=f_3\circ \pi_M$.
    \end{itemize}
    Moreover, we have a binary operation $\star$ in $\mul_G$:
    \[
       {\bm M}\star {\bm N}:= (M\star N, \Phi_{M\star N},\hw_{M\star N},\pi_{M\star N}),
    \]
    where each component is defined as follows:
    \begin{itemize}
      \item $M\star N:=(M\times N)\times_{H^2} M^{(2)}$, where the fiber product is over $\hw_M\times \hw_N$ and the  natural projection $\pr_{H^2}\colon M^{(2)}\to H^2$;

      \item $\Phi_{M\star N}(m,n; u, h_1, h_2)=\Phi_M(m)+\Phi_{N}(n)+\overline{\Delta}_2(u,h_1,h_2)$;

      \item $\hw_{M\star N}\colon M\star N\to  H\ : \ (m,n,u)\mapsto \hw_{M}(m)\hw_N(n)\hw(\overline{w_0}u\overline{w_0})$;

      \item $S_{M\star N}=S_M\times S_N\times H^2$ and 
      \[
        \pi_{M\star N} \colon M\star N\to S_M\times S_N\times H^2\ :\ (m,n,u)\mapsto (\pi_{M}(m), \pi_N(n), \hw_M(m), \hw_{N}(n)).
      \]
    \end{itemize}
\end{Def}

\begin{Ex}\label{H*H}
  The first example of objects in $\mul_G$ is just ${\bm H}:=(H, 0, \id,0)$. Then ${\bm H}\star {\bm H}$ is
  \[
    (M^{(2)}, \overline{\Delta}_2, \hw_{2}, \pi_{2}),
  \]
  where $\hw_{2}(u,h_1,h_2)=h_1h_2\hw(\overline{w_0}u\overline{w_0})$ and $\pi_{2}(u,h_1,h_2)=(h_1,h_2)$.
\end{Ex}

From the definition, one natural question to ask is whether the binary operation $\star$ is associative. First, by defintion, the triple product $(M_1\star M_2)\star M_3$ isomorphic to $(M_1\times M_2 \times M_3)\times_{H^3} M^{(3)}$ in the natural way, so is $M_1\star (M_2\star M_3)$. By Proposition \ref{formforasso}, we have the following non-trivial isomorphism, which will be our associator.
\begin{equation}\label{assoformula}
  \begin{tikzcd}
    \widetilde{\Psi}_{M_1,M_2,M_3}\colon (M_1\star M_2)\star M_3 \arrow[r, "\sim"]  & (M_1\times M_2 \times M_3)\times_{H^3} M^{(3)} \arrow[d, "\id\times \Psi"]  & \\[1pt]
    & (M_1\times M_2 \times M_3)\times_{H^3} M^{(3)} \arrow[r, "\sim"] & M_1\star (M_2\star M_3)
  \end{tikzcd}
\end{equation} 
\begin{Thm}\label{Associator}
  For a reductive group $G$, the category $\mul_G$ is  monoidal with product ${\bm M}_1\star {\bm M}_2$ given by Definition \ref{mulg}, and associator given by the formula \eqref{assoformula}.
\end{Thm}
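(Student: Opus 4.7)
The plan is to split the verification into three parts: well-definedness of the bifunctor $\star$, construction and naturality of $\widetilde{\Psi}$, and the pentagon axiom, with the last being the main point of substance. Since the category is uniteless by convention, the unit axioms need not be checked.

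First I would verify that $\star$ is a bifunctor on $\mul_G$. Given ${\bm M}_i=(M_i,\Phi_{M_i},\hw_{M_i},\pi_{M_i})$ for $i=1,2$, the prescribed fiber product $M_1\star M_2 = (M_1\times M_2)\times_{H^2} M^{(2)}$ is an irreducible affine variety (using irreducibility of $M^{(2)}=U\times H^2$), and the potential, highest-weight map, and projection to $S_{M_1}\times S_{M_2}\times H^2$ are all rational. Functoriality on morphisms is an immediate diagram chase: if ${\bm f}_i$ has components $(f_{i,1},f_{i,2},f_{i,3})$, then ${\bm f}_1\star {\bm f}_2$ is induced by $f_{1,1}\times f_{2,1}$ together with the identity on the $M^{(2)}$-factor, and the compatibilities with $\hw$ and $\pi$ follow from those of the ${\bm f}_i$.

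Next I would establish that $\widetilde{\Psi}_{M_1,M_2,M_3}$ is a natural isomorphism. Both $(M_1\star M_2)\star M_3$ and $M_1\star(M_2\star M_3)$ unfold naturally to $(M_1\times M_2\times M_3)\times_{H^3} M^{(3)}$, and the two unfoldings differ by the automorphism $\Psi$ of $M^{(3)}$ from Proposition \ref{formforasso}. Compatibility with the potentials follows because $\Psi$ is obtained from $\overline{F}_{1,23}\circ \overline{F}_{12,3}^{-1}$, and both trivializations $F_{12,3}, F_{1,23}$ intertwine the potential $\Phi_{G^{(3)}}$ with $\overline{\Delta}_3 + \Phi_{BK}\circ\pr_G$ (applying the Corollary of the $n=2$ trivialization iteratively); thus the induced $\overline{\Delta}_3$ on $M^{(3)}$ is $\Psi$-invariant. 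Compatibility with $\hw$ follows because both $\overline{F}_{a}$ project to the same element $g_1g_2g_3\in G$, hence $\hw_3\circ \Psi = \hw_3$; compatibility with $\pi$ is immediate since $\Psi$ acts trivially on the $H^3$-factor of $M^{(3)}=U^2\times H^3$, as one can read off Example \ref{mainEx}. Naturality in each $M_i$ is inherited from the $U\times U$-equivariance of the trivializations.

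The pentagon axiom is the main obstacle. For four objects ${\bm M}_1,\ldots,{\bm M}_4$, the five bracketings correspond to the vertices of the associahedron $K_4$ (a pentagon), and for each vertex $a$ one obtains a trivialization $F_a\colon G^{(4)}\to M^{(4)}\times_H G$ by iterated use of the basic $F$. Each edge of the pentagon corresponds to a single application of the $n=3$ associator $\widetilde{\Psi}$ in an appropriate three-consecutive-factor sub-bracketing, so the composite around the pentagon is exactly $\overline{F}_{a}^{-1}\circ \overline{F}_{a}$ for a single vertex $a$, i.e.\ the identity on $U\backslash G^{(4)}/U$. The technical content is to show that this abstract identity descends to an identity of automorphisms of $(M_1\times\cdots\times M_4)\times_{H^4} M^{(4)}$, which reduces via Proposition \ref{Thm5} (applied four-fold) to showing that the five induced partial-$\Psi$ automorphisms of $M^{(4)}$ compose to the identity when traversed around the pentagon. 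This in turn is a consequence of the universal property of $U\backslash G^{(4)}/U$ together with the fact that all $F_a$ agree on the $G$-factor and differ only via coordinate changes on $M^{(4)}$. The bulk of the work will be bookkeeping — listing the five vertices, matching the five edges to the appropriate instances of $\widetilde{\Psi}$ applied to sub-triples, and checking potential/$\hw$/$\pi$ compatibility along the way — rather than any deep new geometric input.
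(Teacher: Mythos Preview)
Your proposal is correct and rests on the same geometric fact as the paper's argument, namely that the associator for $\mul_G$ is induced from the \emph{strict} associativity of the convolution product $*$ on $G^{(n)}$ via the trivializations $F_a$. The difference is one of packaging. The paper does not verify bifunctoriality, naturality, and the pentagon axiom one at a time; instead it builds two functors $\mathcal{F}\colon \mathbf{TriUB}_G\to \mul_G$ and $\mathcal{G}\colon \mul_G\to \mathbf{TriUB}_G$ (sending $M$ to $M\times_H G$ and $X$ to $U\backslash X/U$), checks that they are mutually inverse equivalences, and observes that $\mathcal{G}(M\star N)\cong \mathcal{G}(M)*\mathcal{G}(N)$ naturally. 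Since $\mathbf{TriUB}_G$ is monoidal with \emph{trivial} associator, the entire monoidal structure---including the pentagon---transports to $\mul_G$ for free, with the associator \eqref{assoformula} arising precisely as the image under $\mathcal{F}$ of the identity on $\mathcal{G}(M_1)*\mathcal{G}(M_2)*\mathcal{G}(M_3)$. Your $K_4$ bookkeeping with the five trivializations $F_a\colon G^{(4)}\to M^{(4)}\times_H G$ is exactly what this transport-of-structure argument unwinds to in concrete terms, so you are not wrong, just doing by hand what the categorical equivalence does in one stroke.
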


Next, we will add positive structure to the category $\mul_G$.
\begin{Def}
	A geometric multiplicity $M\in\mul_G$ is positive if there exists a positive structure $\Theta_M$ on $M$ s.t. $(M, \Phi_M, \Theta_M)$ is a positive variety with potential, and $\hw_M$ (resp. $\pi_M$) is a $(\Theta_M, \Theta_H)$ (resp. $(\Theta_M, \Theta_S)$) positive map. For simplicity, we denote by ${\bm M}$ the quintuple $(M,\Phi_M, \Theta_M, \hw_M, \pi_M)$ as well. A morphism ${\bm f}=(f_1,f_2,f_3)$ of positive geometric multiplicities ${\bm M}$ and ${\bm N}$ is morphism of geometric multiplicities s.t. $f_1\colon (M,\Phi_M,\Theta_M)\to (N,\Phi_N\Theta_M)$ is morphism of positive varieties, $f_2$ and $f_2$ are positive maps of tori.  Denote by $\mul_G^+$ the subcategory of $\mul_G$ consists of positive geometric multiplicities. 	
\end{Def} 

\begin{Pro}\label{posofg2}
  Denote by $\Theta_{M^{(2)}}:=\Theta_U\times \Theta_H \times \Theta_H$ the positive structure on $M^{(2)}$. Then 
  \[
    {\bm H}\star {\bm H}=\left(M^{(2)}, \overline{\Delta}_2, \hw_{2}, \pi_{2}\right)\in \mul_G
  \]
  is a positive geometric multiplicity.
\end{Pro}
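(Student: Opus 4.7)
The plan is to verify three conditions for ${\bm H}\star {\bm H}$: that $\pi_2$ is positive, that $\hw_2$ is positive, and that $(M^{(2)},\overline{\Delta}_2,\Theta_{M^{(2)}})$ is a positive variety with potential. The first is immediate, since $\pi_2\colon(u,h_1,h_2)\mapsto(h_1,h_2)$ is a coordinate projection onto positive factors of the product structure $\Theta_U\times\Theta_H\times\Theta_H$.

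For $\hw_2(u,h_1,h_2)=h_1h_2\cdot\hw(\overline{w_0}u\overline{w_0})$, it suffices by positivity of multiplication on $H$ to show that $\tau\colon u\mapsto\hw(\overline{w_0}u\overline{w_0})$ is positive. Using the identity $\overline{w_0}u\overline{w_0}=u_-\cdot\overline{w_0}^2$ with $u_-:=\overline{w_0}u\overline{w_0}^{-1}\in U^-$, I extract $\tau(u)$ via generalized minors $\Delta_{\varpi_i,w_0\varpi_i}$ applied to $u_-\cdot\overline{w_0}^2$; these minors are positive Laurent polynomials in the BZ coordinates on $U$ (via the anti-involution $(\cdot)^T$ identifying $U$ and $U^-$), establishing positivity of $\hw_2$.

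The core step is positivity of $\overline{\Delta}_2$. Choosing the canonical representative $(g_1,g_2)=(h_1\overline{w_0}u,\,h_2\overline{w_0})$ of the class $(u,h_1,h_2)\in M^{(2)}$ (verified by $\pi(g_1,g_2)=u$ and $\hw(g_i)=h_i$), the $\chi^{\st}$-linearity of $\Phi_{BK}$ yields the explicit formula
\[
\overline{\Delta}_2(u,h_1,h_2)=\chi^{\st}(u)-\Phi_{BK}(h_1\overline{w_0}u h_2\overline{w_0}).
\]
The argument $\omega:=h_1\overline{w_0}u h_2\overline{w_0}$ lies in $B^-$: using $\overline{w_0}u\overline{w_0}^{-1}\in U^-$ and $\overline{w_0}^{-1}h_2\overline{w_0}\in H$, one finds $\omega=u_-^{h_1}\cdot h_3$ with $u_-\in U^-$ and $h_3=h_1\overline{w_0}^{2}\,w_0^{-1}(h_2)\in H$. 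Thus the preceding proposition $(B^-,\Phi_{BK},\Theta_{B^-})\in\mathbf{PosVarPot}(\mathbb{Q})$ controls $\Phi_{BK}(\omega)$, and the composition $\omega$ is positive in $\Theta_{M^{(2)}}$ and $\Theta_{B^-}$.

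The main obstacle is that $\overline{\Delta}_2$ appears only as a difference of positive expressions. The resolution is that the sign central element $\overline{w_0}^{2}\in H$ appearing in $\omega$ forces $\Phi_{BK}(\omega)$ itself to be the \emph{negative} of a positive rational function on $M^{(2)}$: factoring $\omega=u'h'\overline{w_0}v'$, one sees that the BZ parameters of $u',v'$ come out as negatives of positive Laurent monomials in $(u,h_1,h_2)$, so $\Phi_{BK}(\omega)=\chi^{\st}(u')+\chi^{\st}(v')$ is negative rational and $\overline{\Delta}_2=\chi^{\st}(u)-\Phi_{BK}(\omega)$ is a sum of positive rationals. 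The rank-one check ($G=\SL_2$ with $u=x_1(t)$ and $h_i=\diag(a_i,a_i^{-1})$ produces $\overline{\Delta}_2=t+a_1^{2}/t+a_2^{2}/t$) exhibits the mechanism cleanly; the general case reduces to this pattern by root-subgroup $\SL_2$-embeddings, equivalently exhibiting the central-charge decomposition $\overline{\Delta}_2=c_0+c_1+c_2$ anticipated in the introduction.
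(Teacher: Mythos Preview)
Your setup coincides with the paper's: you choose the same lift $(g_1,g_2)=(h_1\overline{w_0}u,\,h_2\overline{w_0})$ and arrive at the same intermediate formula
\[
\overline{\Delta}_2(u,h_1,h_2)=\chi^{\st}(u)-\Phi_{BK}(h_1\overline{w_0}uh_2\overline{w_0}).
\]
The divergence is in how the minus sign is eliminated. The paper does not track signs through a Bruhat factorisation of $\omega$; instead it invokes the global identity
\[
\Phi_{BK}(g)=-\Phi_{BK}\bigl((g^T)^{w_0}\bigr),
\]
which follows immediately from $\chi^{\st}(u)=-\chi^{\st}((u^T)^{w_0})$ for $u\in U$. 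Applying this to $g=h_1\overline{w_0}uh_2\overline{w_0}$ yields
\[
\overline{\Delta}_2(u,h_1,h_2)=\chi^{\st}(u)+\Phi_{BK}(h_2u^Th_1^{w_0}),
\]
and positivity is then immediate: $(u,h_1,h_2)\mapsto h_2u^Th_1^{w_0}$ is visibly a positive map $M^{(2)}\to B^-$, and $\Phi_{BK}$ is positive on $(B^-,\Theta_{B^-})$.

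Your alternative route has a genuine gap. The assertion that the Bruhat parameters of $u',v'$ in $\omega=u'h'\overline{w_0}v'$ are each \emph{negatives of positive Laurent monomials} is correct in rank one (your $\SL_2$ check is fine), but the sentence ``the general case reduces to this pattern by root-subgroup $\SL_2$-embeddings'' is not a proof: the Bruhat decomposition of $\omega$ does not factor as a product of rank-one decompositions, and in higher rank the parameters are genuinely non-monomial rational functions whose sign behaviour you have not controlled. The appeal to the decomposition $\overline{\Delta}_2=c_0+c_1+c_2$ from the introduction is circular, since the positivity of $c_1,c_2$ (Lemma~\ref{ccchc}) is itself proved via the same transpose/twist manoeuvre the paper uses here. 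The clean fix is exactly the paper's: replace the sign-tracking by the one-line involution identity above.
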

\begin{proof}
  Let us consider the toric chart $\Theta_{M^{(2)}}:=\Theta_U\times \Theta_H\times \Theta_H$ for $M^{(2)}$. What we will show actually is that $(M^{(2)},\overline{\Delta}_2,\Theta_{M^{(2)}})$ is a positive variety with potential. Denote by $g_1=u_1h_1\overline{w_0}v_1, g_2=u_2h_2\overline{w_0}v_2$.

  For $(u,h_1,h_2)\in U\times H^2$, choose a lift in $G\times G$ as $(h_1\overline{w_0}u, h_2\overline{w_0})$. Then one has:
  \begin{align*}
    \overline{\Delta}_{2}(u,h_1,h_2)&=\Delta_{2}\circ \overline{F}^{-1}(u,h_1,h_2)\\
    &=\Phi_{BK}(h_1\overline{w_0}u)+\Phi_{BK}(h_2\overline{w_0})-\Phi_{BK}(h_1\overline{w_0}uh_2\overline{w_0})\\
    &=\chi^{\st}(u)-\Phi_{BK}(h_1\overline{w_0}uh_2\overline{w_0})\\
    &=\chi^{\st}(u)+\Phi_{BK}(h_2u^Th_1^{w_0}),
  \end{align*}
  where $h_1^{w_0}$ is short for $\overline{w_0}^{-1}h_1\overline{w_0}$ for $x\in G$ and the last equality is because that for $g=u\overline{w_0}hv$
  \[
    \Phi_{BK}(g)=\chi^{\st}(u)+\chi^{\st}(v)=-\chi^{\st}\left(\left(u^T\right)^{w_0}\right)-\chi^{\st}\left(\left(v^T\right)^{w_0}\right)=-\Phi_{BK}\left(\left(g^T\right)^{w_0}\right).
  \]
  Then we see immediately that the function $\overline{\Delta}_2$ is positive with respect to the positive structure $\Theta_{M^{(2)}}$.
\end{proof}

\begin{Ex}\label{H*H2}
  In the rest of the paper, denote by
  \[
    (M^{(n)}, \overline{\Delta}_n, \hw_{n}, \pi_{n}):=(\cdots(({\bm H}\star{\bm H})\star {\bm H})\star\cdots \star {\bm H})
  \]
  the $\star$ products of $n$ copies of ${\bm H}$  in the canonical order. Denote by ${\bm u}:=(u_1,\ldots,u_{n-1},h_1,\ldots,h_n)$. Then the potential $\overline{\Delta}_n$ is given by
  \[
  	\overline{\Delta}_n({\bm u})=\sum_{i=1}^{n-1} \chi^{\st}(u_i)+\sum \Phi_{BK}(h_{i+1}u_ip_i^{w_0}({\bm u})),
  \]
  where $p_i({\bm u})=h_i\prod_{j=1}^{i}\hw(\overline{w_0}u_i\overline{w_0})h_j$. Note that this potential can be interpreted as the decent of the central charge $\Delta_n$ of $G^{(n)}$ under the canonical trivialization $G^{(n)}\cong M^{(n)}\times_H G$. We leave it as an excise for the reader to write down explicit formulas for $\hw_{n}$ and $\pi_{n}$. Then we know $(M^{(n)}, \overline{\Delta}_n, \hw_{n}, \pi_{n})$ is a positive geometric multiplicity.
\end{Ex}

As a corollary of this proposition, the binary product $\star$ is well defined in the category $\mul_G^+$. Moreover:

\begin{Main}\label{positiveasso}
  For any semi-simple algebraic group $G$,  the category $\mul_G^+$ is  monoidal with product ${\bm M}_1\star {\bm M}_2$ given by Definition \ref{mulg}, and associator given by the formula \eqref{assoformula}.
\end{Main}
\begin{Rmk}\label{rmkofposass}
  Because of Theorem \ref{Associator} and Proposition \ref{posofg2}, what Theorem \ref{positiveasso} says is that the associator $\widetilde{\Psi}_{M_1,M_2,M_3}$ in \eqref{assoformula} is a positive isomorphism of positive varieties.  
\end{Rmk}

\subsection{Higher central charges}\label{hcc}

Actually, there are more $U\times U$-invariant positive functions on $G^{(n)}$, which we call {\em higher central charges}. First, let us denote by
\[
  \Phi^l(uh\overline{w_0}u')=\chi^{\st}(u),\quad \Phi^r(uh\overline{w_0}u')=\chi^{\st}(u'), \quad \text{for~} uh\overline{w_0}u'\in G^{w_0},
\]
the two component of $\Phi_{BK}$ on $G$. It is easy to see
\[
	\Phi^l(ugu')=\chi^{\st}(u)+\Phi^l(g),\quad \Phi^r(ugu')=\Phi^r(g)+\chi^{\st}(u'), \quad \text{for~} u,u'\in U \text{~and~} g\in G^{w_0}.
\]
\begin{Lem}\label{ccchc}
	The following three functions on $G*G$ are positive and $U\times U$-invariant: for $g_1*g_2\in G*G$
	\[
		c_0(g_1*g_2)=\Phi^r(g_1)+\Phi^l(g_2), c_1(g_1*g_2)=\Phi^l(g_1)-\Phi^l(g_1g_2), c_2(g_1*g_2)=\Phi^r(g_2)-\Phi^r(g_1g_2).
	\]
\end{Lem}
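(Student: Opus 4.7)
The lemma has two parts: $U\times U$-invariance (i.e., descent to $G*G$ and invariance under the outer $U\times U$-action) and positivity. The invariance part is a direct bookkeeping exercise using the transformation rules $\Phi^l(ugu') = \chi^{\st}(u) + \Phi^l(g)$ and $\Phi^r(ugu') = \Phi^r(g) + \chi^{\st}(u')$. For $c_0$, under the diagonal action $(g_1, g_2) \mapsto (g_1 u^{-1}, u g_2)$ defining $G*G$, the contributions $\chi^{\st}(u^{-1})$ from $\Phi^r(g_1 u^{-1})$ and $\chi^{\st}(u)$ from $\Phi^l(u g_2)$ cancel; invariance under the outer action is immediate from the one-sided invariance of $\Phi^l$ and $\Phi^r$. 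For $c_1$, the right $U$-invariance of $\Phi^l$ gives $\Phi^l(g_1 u^{-1}) = \Phi^l(g_1)$ and $\Phi^l(g_1 u^{-1} \cdot u g_2) = \Phi^l(g_1 g_2)$; under the outer left action on $g_1$, the $\chi^{\st}(u)$-shifts in $\Phi^l(g_1)$ and $\Phi^l(g_1 g_2)$ cancel in the difference; and the outer right action on $g_2$ leaves both terms unchanged. The case $c_2$ is symmetric.

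For positivity, I would pass through the trivialization $F : G^{(2)} \xrightarrow{\sim} M^{(2)} \times_H G$ of Proposition \ref{trivilization} and evaluate each $c_i$ in the positive coordinates $(u, h_1, h_2) \in U \times H \times H$. Writing $g_i = u_i h_i \overline{w_0} v_i$ and setting $u := v_1 u_2$, direct substitution yields $c_0 = \chi^{\st}(v_1) + \chi^{\st}(u_2) = \chi^{\st}(u)$, a sum of positive Laurent monomials in any decorated-reduced-word toric chart on $U$. For $c_1$ and $c_2$, I would use $g_1 g_2 = u_1 (h_1 \overline{w_0} u h_2 \overline{w_0}) v_2$ together with the (rationally extended) transformation rules to obtain $c_1 = -\Phi^l(h_1 \overline{w_0} u h_2 \overline{w_0})$ and $c_2 = -\Phi^r(h_1 \overline{w_0} u h_2 \overline{w_0})$, and then refine the involution identity $\Phi_{BK}(g) = -\Phi_{BK}((g^T)^{w_0})$ from the proof of Proposition \ref{posofg2} to its two components $\Phi^l(g) = -\Phi^r((g^T)^{w_0})$ and $\Phi^r(g) = -\Phi^l((g^T)^{w_0})$, which swap the left and right halves. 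This rewrites $c_1 = \Phi^r(h_2 u^T h_1^{w_0})$ and $c_2 = \Phi^l(h_2 u^T h_1^{w_0})$, where $h_2 u^T h_1^{w_0} \in B^-$.

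The main obstacle is the final step: showing that $\Phi^l$ and $\Phi^r$ are \emph{individually} positive on $B^-$ in the $\Theta_{B^-}$-chart, rather than just their sum $\Phi_{BK}$. A direct $\SL_2$ computation confirms this (parametrizing $g = \bigl(\begin{smallmatrix}a & 0\\ b & a^{-1}\end{smallmatrix}\bigr) = u h \overline{w_0} v$ yields $\Phi^l(g) = ab^{-1}$ and $\Phi^r(g) = a^{-1}b^{-1}$, both positive Laurent monomials). The general case I would handle either by an explicit description via twisted minors as in \cite{BZ01}, or by induction on rank using the decorated-reduced-word parametrizations from Proposition \ref{positiveonU}. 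With this individual positivity in hand, all three $c_i$ are manifestly positive, completing the proof.
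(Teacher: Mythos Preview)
Your invariance argument matches the paper's (implicit) bookkeeping, and your positivity argument for $c_0$ is the same. For $c_1$ and $c_2$ you take a slightly different route: you pass through the trivialization and the componentwise involution identity $\Phi^l(g)=-\Phi^r((g^T)^{w_0})$ to reduce everything to the claim that $\Phi^l$ and $\Phi^r$ are \emph{individually} positive on $B^-$, whereas the paper works directly with the expression $-\chi^{\st}([v_1u_2h_2\,\overline{w_0}]_+)$ and converts it via $\iota$ and the twist map $\sigma$ of \cite{BZ01} into $\chi^{\st}([\overline{w_0}^{-1}\sigma(b)]_+)$, then invokes positivity of $\sigma$ and of $b\mapsto[\overline{w_0}^{-1}b]_+$ from \cite[Theorem 4.7]{BZ01}.

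The step you flag as the ``main obstacle'' is in fact not one: the formulas $\Phi^l(g)=\chi^{\st}([\overline{w_0}^{-1}g^\iota]_+)$ and $\Phi^r(g)=\chi^{\st}([\overline{w_0}^{-1}g]_+)$ from \cite[Lemma 1.24]{BKII} (which the paper itself quotes at the start of its proof) settle individual positivity immediately, since $\iota$ and $g\mapsto[\overline{w_0}^{-1}g]_+$ are positive by \cite[Theorem 4.7]{BZ01} and $\chi^{\st}$ is a sum of characters. So no induction on rank or new twisted-minor computation is needed. With that observation your argument is complete; both routes ultimately rest on the same positivity input from \cite{BZ01}, but yours packages it through the involution identity already established in Proposition~\ref{posofg2}, which is arguably cleaner.
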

\begin{proof}
  First of all, one need to check $c_i$'s are well defined. We only need to prove the positivity $c_1$ and $c_2$. Recall that one can rewrite \cite[Lemma 1.24]{BKII}
  \[
    \Phi^l(g)=\chi^{\text{st}}([\overline{w_0}^{-1}g^\iota]_+), \quad \Phi^r(g)=\chi^{\text{st}}([\overline{w_0}^{-1}g]_+).
  \]
  Denote by $g_i=u_ih_i\overline{w_0}v_i$, then one compute
  \[
    \Phi^r(g_1)-\Phi^r(g_1g_2)=\chi^{\st}(v_2)-\chi^{\st}([v_1u_2h_2\overline{w_0}]_+)-\chi^{\st}(v_2).
  \]
  Note that $[g^{-\iota}]_+=[g]_+^{-\iota}$. Then
  \[
    \chi^{\text{st}}([g^{-\iota}]_+)=-\chi^{\text{st}}([g]_+).
  \]
  Let $b=v_1u_2t'$. Then we have
  \[
    -\chi^{\text{st}}([b\overline{w_0}]_+)=\chi^{\text{st}}([b^{-\iota}\overline{w_0}]_+)=\chi^{\text{st}}([b^{-\iota}\overline{w_0}^{-1}]_+)=\chi^{\text{st}}([\overline{w_0}^{-1}\sigma(b)]_+)
  \]
  By \cite[Theorem 4.7]{BZ01}, the maps $b\mapsto [\overline{w_0}^{-1}b]_+$ and $\sigma$ are positive. Thus we know $\Phi^r(g)-\Phi^r(fg)$ is positive. It is similar to show that $\Phi^l(g_1)-\Phi^l(g_1g_2)$ is positive.
\end{proof}
\begin{Rmk}
	Note that the central charge $\Delta_2$ we considered before can be written as $\Delta_2=c_0+c_1+c_2$.
\end{Rmk}

Recall we have the notion of $U\times U$-varieties, see Definition \ref{uuvariety}. Beside the central charge we defined in \eqref{eq:Delta}, one can get more $U\times U$ invariant function by using $U\times U$ equivariant maps. To be more precise, given a  $U\times U$-equivariant map $f\colon X\to Y$ of $U\times U$-varieties $X$ and $Y$, any $U\times U$-invariant function $c$ on $Y$ can be lifted to a $U\times U$-invariant function $c\circ f$ on $X$.


To get $U\times U$-invariant positive functions on $G^{(n)}$, we use the following natural $U\times U$-equivariant positive projections:
\[
	m_{k}^n\colon G^{(n)}\to G^{(n-1)} \ :\ g_1*\cdots* g_n\to g_1*\cdots*(g_kg_{k+1})*\cdots *g_n
\]
and maps
\begin{align*}
	p_n&\colon G^{(n)}\to U\backslash G^{(n-1)} \ :\ g_1*\cdots* g_n\to [g_2*\ldots *g_n];\\
	q_n&\colon G^{(n)}\to G^{(n-1)}/U \ :\ g_1*\cdots* g_n\to [g_1*\ldots *g_{n-1}].
\end{align*}
If a function $\Delta$ on $G^{(n)}$ is $U\times U$-invariant, then $\Delta$ descends to functions on $G^{(n-1)}/U$ and $U\backslash G^{(n-1)}$, which will be denoted by $\Delta$ as well by abusing of notation.
\begin{Pro}\label{uuinv}
	For any $U\times U$-invariant positive function $\Delta$ on $G^{(n-1)}$, the following functions
	\[
		\Delta\circ m_k^n, \quad, \Delta\circ p_n, \quad \Delta\circ q_n \quad \text{~for~} k\in [1,n-1]
	\]
	are $U\times U$-invariant positive on $G^{(n)}$. Denote by ${\bf C}_n$ the set of these functions, and we call them central charges.
\end{Pro}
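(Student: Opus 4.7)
The plan is to separate the two claims, $U\times U$-invariance and positivity, and handle them independently. Invariance is essentially automatic: each of $m_k^n$, $p_n$, $q_n$ is $U\times U$-equivariant once we note that on the targets $U\backslash G^{(n-1)}$ and $G^{(n-1)}/U$ the relevant one-sided $U$-action has been killed by construction, while the residual actions match the right/left action at the extreme end of $G^{(n)}$. Since $\Delta$ is $U\times U$-invariant on $G^{(n-1)}$, it factors through $U\backslash G^{(n-1)}/U$, whence $\Delta\circ m_k^n$, $\Delta\circ p_n$, $\Delta\circ q_n$ are $U\times U$-invariant on $G^{(n)}$. The substantive content is therefore positivity.

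For positivity, I would pass to the model space. Iterating the trivialization $F$ from \eqref{trivilization} gives a canonical birational isomorphism $G^{(n)}\cong M^{(n)}\times_H G$, and by $U\times U$-invariance one can write $\Delta=\overline{\Delta}\circ\overline{\rho}$ where $\overline{\rho}\colon G^{(n-1)}\to U\backslash G^{(n-1)}/U\cong M^{(n-1)}$ is the quotient and $\overline{\Delta}$ is positive on $(M^{(n-1)},\Theta_{M^{(n-1)}})$ by hypothesis. Hence it suffices to verify that the induced maps $\overline{m}_k^n, \overline{p}_n, \overline{q}_n\colon M^{(n)}\to M^{(n-1)}$ on the $U\times U$-quotients are positive with respect to the toric structure $\Theta_{M^{(n)}}=\Theta_U^{\times(n-1)}\times\Theta_H^{\times n}$ from Example \ref{H*H2}.

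For $\overline{m}_k^n$, I would read the formula off directly from Definition \ref{mulg}: under the identification $M^{(n)}\cong {\bm H}^{\star n}$, the map $m_k^n$ corresponds to contracting the $k$-th and $(k+1)$-st tensorands into a single $\star$-product ${\bm H}\star{\bm H}$, so its positivity reduces to Proposition \ref{posofg2}, with the associator from Main Theorem \ref{positiveasso} ensuring that different bracketings give the same (positive) answer. For $\overline{p}_n$ and $\overline{q}_n$, my plan is to write them explicitly in the coordinates $(u_1,\ldots,u_{n-1},h_1,\ldots,h_n)$: $\overline{q}_n$ is essentially a ``forget the last factor'' projection, while $\overline{p}_n$ must reabsorb the first $H$-datum via the factorisation map $\pi$ of \eqref{factors} and the $\hw$-map. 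Both $\pi$ and $\hw$ are given by subtraction-free formulas in the toric coordinates (the latter by Proposition \ref{positiveonU}, the former by the explicit Gauss-decomposition computation illustrated in Example \ref{mainEx}), so each output coordinate becomes a subtraction-free Laurent expression in the input ones, establishing positivity.

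The main obstacle I anticipate is the bookkeeping for $\overline{p}_n$: unlike $\overline{m}_k^n$, this map does not arise as an obvious structural morphism in $\mul_G^+$, and tracking how the data of $g_1$ is redistributed among the remaining $u_i$'s and $h_i$'s under the identification $G^{(n)}/U \cong U\backslash G^{(n-1)}$ forces one to unwind the iterated trivialization $F_{12,3}\circ\cdots$ at every step. Once these explicit formulas are in hand, positivity of $\Delta\circ m_k^n$, $\Delta\circ p_n$, $\Delta\circ q_n$ follows formally as the composition $\overline{\Delta}\circ(\text{positive map})$, completing the proof.
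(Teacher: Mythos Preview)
Your argument is correct in principle, but you are taking a much longer route than the paper does, and in doing so you create the very ``obstacle'' you worry about at the end.

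The paper does not prove this proposition separately at all: the sentence immediately preceding it already declares $m_k^n$, $p_n$, $q_n$ to be ``natural $U\times U$-equivariant positive projections'', and the proposition is then the trivial observation that composing a positive invariant function with a positive equivariant map yields a positive invariant function. The point is that positivity is checked directly in the product chart $(B^-)^n\hookrightarrow G^{(n)}$, not in the model space $M^{(n)}$. In that chart, $p_n$ and $q_n$ are literally coordinate projections $(g_1,\ldots,g_n)\mapsto(g_2,\ldots,g_n)$ and $(g_1,\ldots,g_n)\mapsto(g_1,\ldots,g_{n-1})$, hence obviously positive; and $m_k^n$ is the identity in all slots except that $g_k,g_{k+1}$ are replaced by $g_kg_{k+1}$, whose positivity is exactly the positivity of the multiplication map $B^-\times B^-\dashrightarrow B^-$ (this is the ``$F$ is positive'' half of \eqref{mainmap}, cited in the paper from \cite[Claim 3.41]{BKII}).

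By contrast, you first push everything through the iterated trivialization $G^{(n)}\cong M^{(n)}\times_H G$ and then try to compute the induced maps $\overline{m}_k^n,\overline{p}_n,\overline{q}_n$ on $M^{(n)}$. This is valid---your reduction ``$\Delta$ positive on $G^{(n-1)}$ $\Rightarrow$ $\overline{\Delta}$ positive on $M^{(n-1)}$'' is exactly Main Theorem \ref{positiveasso}---but it forces you to unwind the trivialization, and your description of $\overline{q}_n$ as ``essentially a forget-the-last-factor projection'' is not quite right: in the left-bracketed trivialization the last $G$-factor records $g_1\cdots g_n$, so forgetting $g_n$ already requires the inverse of $F$ in the last slot. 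In short, the detour through $M^{(n)}$ buys you nothing here; the positivity of $m_k^n,p_n,q_n$ is immediate in the product chart, which is what the paper (implicitly) uses.
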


\begin{Ex}\label{ex:n=3}
	The set ${\bf C}_3$ for any $G$ is:
	\begin{align*}
		c_0\circ m_1^3&\colon \Phi^r(g_1g_2)+\Phi^l(g_3);& c_1\circ m_1^3&\colon \Phi^l(g_1g_2)-\Phi^l(g_1g_2g_3);&  c_2\circ m_1^3&\colon \Phi^r(g_3)-\Phi^r(g_1g_2g_3);&\\
		c_0\circ m_2^3&\colon \Phi^r(g_1)+\Phi^l(g_2g_3);&  c_1\circ m_2^3&\colon \Phi^l(g_1)-\Phi^l(g_1g_2g_3);& c_2\circ m_2^3&\colon \Phi^r(g_2g_3)-\Phi^r(g_1g_2g_3);&\\
		c_0\circ p_3&\colon \Phi^r(g_2)+\Phi^l(g_3);& c_0\circ q_3&\colon \Phi^r(g_1)+\Phi^l(g_2). & &
	\end{align*}
	Note that functions $c_i\circ p_3$ and $c_i\circ q_3$ for $i=1,2$ are not in the list because they are the linear combinations of the other. For example, we have:
	\[
		c_1\circ q_3=c_1\circ m_2^3-c_1\circ m_1^3.
	\]
	Also one can check the 8 functions in the list are linear independent. Moreover one can check
	\[
	    \sum_{i=1}^3\Phi(g_i)-\Phi(g_1g_2g_3)=c_2\circ m_1^3+c_1\circ m_2^3+c_0\circ p_3+c_0\circ p_3.
	\]
	which gives us the central charges on $G^{(3)}$.
\end{Ex}
Since the higher central charges are $U\times U$ invariant, they descend to positive functions on $B^-\times \cdots \times B^-$. In the following, let us consider $G=\GL_2$ and $n=3$ and we follow the notation in Example \ref{mainEx}.
\begin{Ex}\label{hccGL2}
	 Note that $\Phi^l(x_i)=a_ib_i^{-1}$ and $\Phi^r(x_i)=(a_ib_i)^{-1}$. Direct computation gives ${\bf C}_3(\GL_2)$
	\begin{align*}
		c_0\circ m_1^3&\colon b_3^{-1}X_{12}Y^{-1};& c_1\circ m_1^3&\colon a_1a_2c_1c_2b_3X_{12}Y;&  c_2\circ m_1^3&\colon a_3b_3^{-1}c_3X_{12}^{-1}Y;&\\
		c_0\circ m_2^3&\colon b_1^{-1}X_{23}Y^{-1};&  c_1\circ m_2^3&\colon a_1b_1^{-1}c_1X_{23}^{-1}Y;& c_2\circ m_2^3&\colon a_2a_3c_2c_3b_1X_{23}Y;&\\
		c_0\circ p_3&\colon b_2^{-1}b_3^{-1}X_{23}^{-1};& c_0\circ q_3&\colon b_1^{-1}b_2^{-1}X_{12}^{-1}. &&
	\end{align*}
	where
	\[
		X_{ij}^{-1}=b_ia_j+c_ib_j; \quad Y^{-1}=b_1a_2a_3+c_1b_2a_3+c_1c_2b_3.
	\]
	Next we will write these functions using coordinates on our geometric multiplicity space. Recall the map $F_{12,3}$ and its inverse in Example \ref{mainEx}, then we can write all these functions on $M^{(2)}$ as 
	\begin{align*}
		c_0\circ m_1^3&\colon u_2;& c_1\circ m_1^3&\colon \dfrac{e_1e_2}{f_1f_2}u_1^{-2}u_2^{-1};&  c_2\circ m_1^3&\colon \dfrac{e_3}{f_3}u_2^{-1};&\\
		c_0\circ m_2^3&\colon u_2';&  c_1\circ m_2^3&\colon \dfrac{e_1}{f_1}\cdot\dfrac{1}{u_2'};& c_2\circ m_2^3&\colon \dfrac{e_2e_3}{f_2f_3}(u_1')^{-2}(u_2')^{-1};&\\
		c_0\circ p_3&\colon u_1';& c_0\circ p_3&\colon u_1. &&
	\end{align*}
	where we recall
	\[
		u_1'=\dfrac{e_2}{u_1f_2}+u_2; \quad u_2'=\dfrac{u_1u_2f_2}{u_1^{-1}e_2+f_2u_2}.
	\]
\end{Ex}

\subsection{Isomorphism of geometric multiplicities}

In this section, we show one example of isomorphism of geometric multiplicities arising from Howe duality without giving all details. This section is motivated by Howe $(\GL_2, \GL_n)$-duality \cite{Howe}. Let $\lambda$ be a Young diagram of depth $\leqslant n$ and $V_\lambda^n$ be the polynomial representation of $\GL_n$ parametrized by $\lambda$. The following is clear:
\[
	\bigoplus_{(p_1,\ldots,p_n)}S^{p_1}(\mathbb{C}^2)\otimes\cdots \otimes S^{p_l}(\mathbb{C}^2)\cong \mathbb{C}
	\begin{bmatrix}
		x_n & x_{n-1} & \ldots & x_1\\
		y_n & y_{n-1} & \ldots & y_1
	\end{bmatrix}\cong
	\bigoplus_{\text{depth}(\lambda)\leqslant 2} V_\lambda^2\otimes V_\lambda^n,
\]
where $S^l(\mathbb{C}^m)$ is the symmetric polynomial of degree $l$. Moreover we have
\begin{equation}\label{eq:eq3}
	S^{p_1}(\mathbb{C}^2)\otimes\cdots \otimes S^{p_n}(\mathbb{C}^2) \cong \bigoplus V_\lambda^l(p_1,\ldots,p_n)\otimes V_\lambda^2.	
\end{equation}
Thus for the tensor multiplicities, we have $\left[V_\lambda^n \ : \ S^{p_1}(\mathbb{C}^n)\otimes\cdots \otimes S^{p_l}(\mathbb{C}^n) \right]=\dim V_\lambda^l(p_1,\ldots,p_l)$. The following argument is to a way to relate our geometric multiplicity $M^{(n)}$ to a subvariety of $\mathbb{C}_\mathbf{m}^{2n}$.

We consider {\em geometric (pre)-crystals} of $\GL_2$:
\[
	{\bm X}:=\left( \mathbb{G}_{\mathbf{m}}^2, \gamma, \varphi, \varepsilon, e^{\sbt} \right),
\]
where for $(x,y)\in \mathbb{G}_{\mathbf{m}}^2$,
\[
	\gamma(x,y)=\begin{bmatrix}
		x & 0\\
		0 & y
	\end{bmatrix},\  \varphi(x,y)=x^{-1},\  \varepsilon(x,y)=y^{-1},\ e^c(x,y)=(cx, c^{-1}y).
\]
Note that ${\bm X}$ admits a natural potential $\Phi(x,y)=x+y$. By \cite[Definition 2.15]{BKII}, we consider the $n$ copies of ${\bm X}$. Write the basis variety of ${\bm X}^n:=\left( \mathbb{G}_{\mathbf{m}}^{2n}, \gamma_n, \varphi_n, \varepsilon_n, e^{\sbt}_n \right)$ as $2\times n$ matrix with the following coordinates:
\[
	{\bm x}:=\begin{bmatrix}
		x_n & x_{n-1} &\cdots & x_1\\
		y_n & y_{n-1} &\cdots & y_1
	\end{bmatrix}
\]
with potential $\Phi_n=\sum_1^n (x_i+y_i)$. Direct computation shows that
\[
	\varepsilon_n({\bm x})=\sum_{i=0}^{n-1} \frac{x_1\cdots x_i}{y_1\cdots y_{i+1}}=\frac{1}{y_1}\left(1+\sum_{i=1}^{n-1} \frac{x_1\cdots x_i}{y_2\cdots y_{i+1}}\right).
\]

Let $H:=\{\diag(x,y)\mid x,y\in \mathbb{G}_{\mathbf{m}}\}$ be the Cartan subgroup of $\GL_2$ and
\[
	T:=\{\diag(x,1) \mid x\in \mathbb{G}_{\mathbf{m}}\}\subset H
\]
be a subgroup of $H$. Consider the subvariety $Y$ of $\mathbb{G}_\mathbf{m}^{2n}$ given by $y_1=1$. Then we have the following positive geometric multiplicity:
\[
	{\bm M}_2^n:=\left(Y, \Phi_n+\varepsilon_n, \id_Y \gamma_n, p_n\right),
\]
where $p_n\colon {\bm M}_2^n\to T^n$ by sending ${\bm x}$ to $\diag(x_1,1)\times \diag(x_ny_n,1)$ $\id_Y$ is the positive structure on $Y$. Recall we have a positive geometric multiplicity $(U^{n-1}\times H^n, \overline{\Delta}_n, \hw_n, \pi_n)$ for $\GL_2$, which restrict to
\[
	{\bm U}_2^n:=(U^{n-1}\times T^n, \overline{\Delta}_n, \Theta_U^{n-1}\times \theta_T^{n}, \hw_n, \pi_n).
\]
Denote by ${\bm u}:=(u_1,\ldots,u_{n-1}; e_1,\ldots, e_n)$ the natural coordinates on ${\bm U}_2^n$. Then
\begin{Thm}
	The following map $\varPsi\colon {\bm M}_2^n\to {\bm U}_2^n$ is a morphism of positive geometric multiplicities:
	\[
		u_i\circ \varPsi= y_{i+1}; \quad e_i\circ \varPsi = x_iy_i, 	
	\] 
	{\em i.e.}, we have $\hw_n\circ \varPsi=\gamma_n$, $\pi_n\circ \varPsi=p_n$ and $\Phi_n+\varepsilon_n-\overline{\Delta}_n\circ \varPsi$ is positive. Moreover,
	\begin{equation}\label{compar}
		\dim \mathbb{C}\left[ (\gamma_n, p_n)^{-t}(\mu; \lambda_1,\ldots,\lambda_n)\right]= \dim \mathbb{C}\left[ (\hw_n, \pi_n)^{-t}(\mu; \lambda_1,\ldots,\lambda_n)\right],
	\end{equation}
	where $\mu\in X_*(H)$ and $\lambda_i\in X_*(T)$.
\end{Thm}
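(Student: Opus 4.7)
The plan is to check the two morphism conditions in coordinates and then deduce the multiplicity identity from the explicit form of the potential difference.

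First I would verify compatibility of the base maps $\hw_n$ and $\pi_n$. Since $\pi_n\colon U^{n-1}\times H^n\to H^n$ is the projection onto the Cartan factors, which on ${\bm U}_2^n$ reads $(\diag(e_i,1))_i$, substituting $e_i=x_iy_i$ gives $\pi_n\circ\varPsi=p_n$ at once. For $\hw_n$ I would proceed by induction using the recursion $\hw_{M\star N}(m,n,u)=\hw_M(m)\hw_N(n)\hw(\overline{w_0}u\overline{w_0})$ of Definition \ref{mulg}. In the $\GL_2$ case a direct Gauss decomposition yields $\hw(\overline{w_0}u\overline{w_0})=\diag(u^{-1},u)$, hence inductively
\[
\hw_n({\bm u})=\diag\!\left(\frac{e_1e_2\cdots e_n}{u_1u_2\cdots u_{n-1}},\ u_1u_2\cdots u_{n-1}\right).
\]
Pulling back via $\varPsi$ and using $y_1=1$, this simplifies to $\diag(x_1\cdots x_n,\,y_2\cdots y_n)=\gamma_n({\bm x})$.

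Second, to establish positivity of $\Phi_n+\varepsilon_n-\overline{\Delta}_n\circ\varPsi$, I would compute $\overline{\Delta}_n\circ\varPsi$ inductively by iterating the $n=2$ formula $\overline{\Delta}_2(u,h_1,h_2)=\chi^{\st}(u)+\Phi_{BK}(h_2u^Th_1^{w_0})$ from Proposition \ref{posofg2}. For $n=2$ one finds $\overline{\Delta}_2\circ\varPsi=y_2+x_1/y_2+x_2$, so that the difference is $x_1+2$, manifestly positive. An inductive argument should yield the closed-form identity
\[
\overline{\Delta}_n\circ\varPsi=\sum_{i=2}^n(x_i+y_i)+\sum_{i=1}^{n-1}\frac{x_1x_2\cdots x_i}{y_2y_3\cdots y_{i+1}},
\]
which gives $\Phi_n+\varepsilon_n-\overline{\Delta}_n\circ\varPsi=x_1+2$ for all $n\geq 2$. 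The inductive step reduces to evaluating $\overline{\Delta}_2(u_{n-1},\hw_{n-1},h_n)$ and verifying that, after substitution, the new $\Phi_{BK}$-contribution expands as $y_n+x_n+x_1\cdots x_{n-1}/(y_2\cdots y_n)$, matching exactly the new summands of $\Phi_n+\varepsilon_n$ (namely $x_n$, $y_n$, and the new $\varepsilon$-term).

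Third, for the multiplicity identity \eqref{compar}, note that $\varPsi$ is a biregular isomorphism between $(2n-1)$-dimensional tori (a monomial change of variables), so $\varPsi^t$ is a $\mathbb{Z}$-linear bijection. By the base-map compatibility it restricts to a bijection between the lifts of each fiber $(\mu;\lambda_1,\ldots,\lambda_n)$. Positivity of the potential difference yields $\varPsi^t(({\bm M}_2^n)^t)\subseteq ({\bm U}_2^n)^t$; for the reverse inclusion I would use
\[
(\Phi_n+\varepsilon_n)^t(\xi)=\min\bigl(\overline{\Delta}_n^t(\varPsi^t(\xi)),\ (x_1+2)^t(\xi)\bigr),
\]
together with $(x_1+2)^t(\xi)=\min(\xi_{x_1},0)=\min(\lambda_1,0)=0$ on the fiber, since $\lambda_1\in X_*(T)_+$ is a dominant (non-negative) cocharacter. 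Consequently the two tropical fibers coincide under $\varPsi^t$, and their linearizations have the same dimension.

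The main obstacle is the inductive identity $\overline{\Delta}_n\circ\varPsi=\Phi_n+\varepsilon_n-(x_1+2)$: this is a concrete but intricate computation involving iterated Gauss decompositions in $\GL_2$. Once established, the positivity statement is immediate and the multiplicity equality follows from the tropical argument above. A conceptual cross-check, via Howe duality and equation \eqref{eq:eq3}, identifies both sides of \eqref{compar} with the multiplicity of $V_\mu^2$ in the tensor product $S^{\lambda_1}(\mathbb{C}^2)\otimes\cdots\otimes S^{\lambda_n}(\mathbb{C}^2)$, providing an independent verification.
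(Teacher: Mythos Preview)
Your approach is essentially the same as the paper's: both arrive at the key identity $\Phi_n+\varepsilon_n-\overline{\Delta}_n\circ\varPsi=x_1+2$ and then pass to tropicalizations. The only procedural difference is that the paper substitutes directly into the closed formula for $\overline{\Delta}_n$ given in Example~\ref{H*H2}, computing $e_{i+1}/u_i=x_{i+1}$ and $e_1\cdots e_i/\bigl((u_1\cdots u_{i-1})^2u_i\bigr)=x_1\cdots x_i/(y_2\cdots y_{i+1})$ term by term, rather than running your proposed induction; either route yields the same closed form.

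One small point to tighten in your tropical step: you invoke $\lambda_1\in X_*(T)_+$ to force $(x_1+2)^t=0$ on the fiber, but the theorem as stated allows arbitrary $\lambda_i\in X_*(T)$. This is harmless, because the $\overline{\Delta}_n$-cone itself already forces $\lambda_1\geq 0$: the summands $y_2$ and $x_1/y_2$ in $\overline{\Delta}_n\circ\varPsi$ tropically give $\xi_{y_2}\geq 0$ and $\xi_{x_1}-\xi_{y_2}\geq 0$, hence $\xi_{x_1}=\lambda_1\geq 0$. So for $\lambda_1<0$ both fibers are empty, and for $\lambda_1\geq 0$ your argument goes through. The paper handles this with the terse line $(\Phi_n+\varepsilon_n)^t=(\Phi_n+\varepsilon_n-x_1-2)^t$, which is really the statement that the two tropical cones coincide rather than an equality of piecewise-linear functions.
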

\begin{proof}
	Note that $\hw_n\circ \varPsi=\gamma_n$ and  $\pi_n\circ \varPsi=p_n$ is clear from the definition. Recall that $\overline{\Delta}_n$ is given by
	\[
		\overline{\Delta}_n({\bm u})=\sum_{i=1}^{n-1} \left(u_i+\frac{e_{i+1}}{u_i}+\frac{e_1\cdots e_{i}}{(u_1\cdots u_{i-1})^2}\cdot\frac{1}{u_i}\right).
	\]
	Direct computation shows
	\[
		\frac{e_{i+1}}{u_i}=x_{i+1}; \quad \frac{e_1\cdots e_{i}}{(t_1\cdots u_{i-1})^2}\cdot\frac{1}{u_i}=\frac{x_1\cdots x_i}{y_2\cdots y_{i+1}}.
	\]
	Thus $\Phi_n+\varepsilon_n-\overline{\Delta}_n\circ \varPsi=x_1+2$. It is easy to see that
	\[
		(\Phi_n+\varepsilon_n)^t=(\Phi_n+\varepsilon_n-x_1-2)^t.
	\]
	Thus we get the comparison \eqref{compar}.
\end{proof}

We hope to generalize the result here to Howe $(\GL_m, \GL_n)$-duality in future work.

\section{From Positive Geometric Multiplicities to Tensor Multiplicities}\label{tensor mul}

In this section, we will explain how pass from the geometric multiplicities to tensor multiplicities by the usage of tropicalization.

For ${\bm M}=(M,\Phi_M, \Theta_M, \hw_M, \pi_M)\in \mul_G^+$, the tropicalization of the positive map $\pi_M\times \hw_M$ gives a morphism of affine tropical varieties:
\[
  (\pi_M\times \hw_M)^t\colon (M, \Phi_M)^t\to S^t\times H^t=X_*(S)\times X_*(H),
\]
where $X_*(S)$ (resp. $X_*(H)$) is the cocharacter lattice of the torus $S$ (resp. $H$). Note that $X_*(H)$  is isomorphic naturally to the character lattice $X^*(H^\vee)$ of $G^\vee$.  For $(\xi, \lambda^\vee)\in X_*(S)\times X_*(H)$, denote by
\[
  M^t_{\xi,\lambda^\vee}:=(\pi_M\times \hw_M)^{-t}(\xi, \lambda^\vee)
\]
the tropical fiber of $(M, \Phi_M)^t$. We say the positive geometric multiplicity ${\bm M}$ is {\em finite} if the morphism $(\pi_M\times\hw_M)^t$ is finite as in Definition \ref{finite}.

To each positive geometric multiplicity ${\bm M}$, we assign a $G^\vee$-module $\mathcal{V}({\bm M})$ to ${\bm M}$ via
\[
  \mathcal{V}({\bm M})=\bigoplus_{(\xi,\lambda^\vee)\in X_*(S)\times X_*^+(H)} \mathbb{C}[M^t_{\xi,\lambda^\vee}]\otimes V_{\lambda^\vee},
\]
where $X_*^+(H)$ is the set of dominant integral coweights of $G$ and $V_{\lambda^\vee}$ is the irreducible representation of $G^\vee$ with highest weight $\lambda^\vee$. Denote by $\mathbf{Mod}_{G^\vee}$ the category of $G^\vee$-modules.
\begin{Main}\label{functor}
  The assignments ${\bm M}\mapsto \mathcal{V}({\bm M})$ is a monoidal functor from $\mul_G^+$ to $\mathbf{Mod}_{G^\vee}$.
\end{Main}

For $\xi\in X_*(S)$,  define the typical $\xi$-component $\mathcal{V}_{\xi}({\bm M})$ by
\begin{equation}\label{typcom}
  \mathcal{V}_{\xi}({\bm M}):=\bigoplus_{\lambda^\vee\in X_*^+(H)} \mathbb{C}[M^t_{\xi,\lambda^\vee}]\otimes V_{\lambda^\vee}.
\end{equation}
Then we have a similar statement as Theorem \ref{functor} for the typical components: 
\begin{Thm}\label{functor-compo}
  Given positive geometric multiplicities ${\bm M}_i$ for $i=1,2$, one has the following natural isomorphism of $G^\vee$-modules
  \begin{equation}\label{intro-eq1}
    \mathcal{V}_{\xi_1,\xi_2,\lambda^\vee, \nu^\vee}({\bm M}_1\star {\bm M}_2) \cong I_{\lambda^\vee}\left(\mathcal{V}_{\xi_1}({\bm M}_1)\right)\otimes I_{\nu^\vee}\left(\mathcal{V}_{\xi_1}({\bm M}_2)\right),
  \end{equation}
  where $I_{\lambda^\vee}(V)$ denotes the $\lambda^\vee$-th isotypic component of a $G^\vee$-module $V$.
\end{Thm}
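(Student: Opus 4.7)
The plan is to write both sides of the desired isomorphism explicitly in terms of tropical fibers and match them, using a tropical decomposition of $M_1\star M_2$ together with the already available identification $\mathcal V_{\lambda^\vee,\nu^\vee}({\bm H}\star {\bm H})\cong V_{\lambda^\vee}\otimes V_{\nu^\vee}$ coming from Theorem \ref{functor} applied to Example \ref{H*H}.

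First, I would unwind Definition \ref{mulg}: the positive variety $M_1\star M_2=(M_1\times M_2)\times_{H^2} M^{(2)}$ carries the positive projection $\pi_{M_1\star M_2}(m_1,m_2,u)=(\pi_{M_1}(m_1),\pi_{M_2}(m_2),\hw_{M_1}(m_1),\hw_{M_2}(m_2))$, the positive map $\hw_{M_1\star M_2}(m_1,m_2,u)=\hw_{M_1}(m_1)\hw_{M_2}(m_2)\hw(\overline{w_0}u\overline{w_0})$, and the potential $\Phi_{M_1\star M_2}=\Phi_{M_1}+\Phi_{M_2}+\overline{\Delta}_2$, which splits across the three factors. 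Since all the maps involved are positive (Proposition \ref{posofg2} supplies positivity on the $M^{(2)}$ factor, positivity on the $M_i$ factors is assumed), applying the tropicalization functor of Proposition \ref{prop;equivofcats} yields the decomposition
\[
(M_1\star M_2)^t_{\xi_1,\xi_2,\lambda^\vee,\nu^\vee,\mu^\vee} \;\cong\; (M_1)^t_{\xi_1,\lambda^\vee}\times (M_2)^t_{\xi_2,\nu^\vee}\times (M^{(2)})^t_{\lambda^\vee,\nu^\vee,\mu^\vee}.
\]
Here the three potentials decouple tropically, and the fiber-product constraint at the $H^2$-level becomes precisely the matching of the $\hw^t$-values of the $M_i$ pieces with the two $H$-coordinates of the $M^{(2)}$ piece.

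Second, I would linearize, tensor with $V_{\mu^\vee}$, and sum over $\mu^\vee$ to compute
\begin{align*}
\mathcal V_{\xi_1,\xi_2,\lambda^\vee,\nu^\vee}({\bm M}_1\star {\bm M}_2)
&\cong \mathbb{C}\bigl[(M_1)^t_{\xi_1,\lambda^\vee}\bigr]\otimes \mathbb{C}\bigl[(M_2)^t_{\xi_2,\nu^\vee}\bigr]\otimes \bigoplus_{\mu^\vee}\mathbb{C}\bigl[(M^{(2)})^t_{\lambda^\vee,\nu^\vee,\mu^\vee}\bigr]\otimes V_{\mu^\vee}\\
&\cong \mathbb{C}\bigl[(M_1)^t_{\xi_1,\lambda^\vee}\bigr]\otimes \mathbb{C}\bigl[(M_2)^t_{\xi_2,\nu^\vee}\bigr]\otimes V_{\lambda^\vee}\otimes V_{\nu^\vee},
\end{align*}
where in the second line I invoke $\mathcal V_{\lambda^\vee,\nu^\vee}({\bm H}\star {\bm H})\cong V_{\lambda^\vee}\otimes V_{\nu^\vee}$; numerically this is the Berenstein--Zelevinsky identity $\#(M^{(2)})^t_{\lambda^\vee,\nu^\vee,\mu^\vee}=c^{\mu^\vee}_{\lambda^\vee,\nu^\vee}$ from \cite{BZ01}. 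Since each $V_{\lambda^\vee}$ is irreducible, the $\lambda^\vee$-isotypic component of $\mathcal V_{\xi_1}({\bm M}_1)=\bigoplus_{\mu^\vee}\mathbb{C}[(M_1)^t_{\xi_1,\mu^\vee}]\otimes V_{\mu^\vee}$ is exactly $I_{\lambda^\vee}(\mathcal V_{\xi_1}({\bm M}_1))=\mathbb{C}[(M_1)^t_{\xi_1,\lambda^\vee}]\otimes V_{\lambda^\vee}$, and likewise for ${\bm M}_2$. Tensoring the two and rearranging factors produces the right-hand side of \eqref{intro-eq1}; naturality follows from naturality of tropicalization together with naturality of the three tensor-product identifications used.

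The main obstacle is the input from Theorem \ref{functor}, namely the canonical $G^\vee$-equivariant identification $\bigoplus_{\mu^\vee}\mathbb{C}[(M^{(2)})^t_{\lambda^\vee,\nu^\vee,\mu^\vee}]\otimes V_{\mu^\vee}\cong V_{\lambda^\vee}\otimes V_{\nu^\vee}$, not merely as a graded vector space. The dimension count is a direct tropicalization of the explicit formula for $\overline{\Delta}_2$ derived in the proof of Proposition \ref{posofg2}, but promoting it to a $G^\vee$-equivariant isomorphism requires the full representation-theoretic content of the functor $\mathcal V$, which is where the bulk of the work lies; once that input is in hand, everything else in the argument above is formal.
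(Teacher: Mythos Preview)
Your tropical decomposition is correct and gives a genuinely different route from the paper's. The paper does not reduce to the ${\bm H}\star{\bm H}$ case at all: it identifies $\mathcal V_\xi({\bm M})$ with $\mathcal B_\xi(\mathcal F({\bm M}))$, where $\mathcal B$ is the Berenstein--Kazhdan functor from positive unipotent bicrystals to $G^\vee$-modules in \cite{BKII}, and then invokes the properties $(1')$ and $(4)$ of $\mathcal B$ listed in Section~9 to get the isomorphism in one step. Your approach instead makes explicit that the only nontrivial $G^\vee$-equivariant content sits in the $M^{(2)}$ factor, which is conceptually cleaner; the paper's approach has the advantage that the graded monoidality comes packaged for free from \cite{BKII} without singling out a base case.

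There is one logical wrinkle you should be aware of. You attribute the $G^\vee$-equivariant identification $\bigoplus_{\mu^\vee}\mathbb C[(M^{(2)})^t_{\lambda^\vee,\nu^\vee,\mu^\vee}]\otimes V_{\mu^\vee}\cong V_{\lambda^\vee}\otimes V_{\nu^\vee}$ to Theorem~\ref{functor}. But Theorem~\ref{functor} as stated only asserts monoidality of $\mathcal V$, not compatibility with the extra $S$-grading; in the paper's organization that graded compatibility \emph{is} Theorem~\ref{functor-compo}, and the ${\bm H}\star{\bm H}$ case you want (Theorem~\ref{multi-variety}) is deduced \emph{from} it afterward. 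So as written your argument is circular relative to the paper's dependency order. The fix is simply to cite the \cite{BKII} properties $(1'),(3),(4)$ directly for the special case $M_1=M_2=H$ (which is exactly what the paper does for the general case), or equivalently to observe that your reduction shows Theorem~\ref{functor-compo} is \emph{equivalent} to its special case ${\bm H}\star{\bm H}$ once the tropical product decomposition is in hand.
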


One of the fundamental problems of the representation of $G$ is to determine the tensor product multiplicity $c_{\lambda,\nu}^\mu$ of $V_\mu$ in $V_\lambda\otimes V_\nu$. Now we can find a solution to this problem by using Theorem \ref{functor-compo}:
\begin{Thm}\label{multi-variety}
  The positive geometric multiplicity $(M^{(2)}, \overline{\Delta}_2, \hw_{2}, \pi_{2})$ is finite and the tensor multiplicity $c_{\lambda^\vee, \nu^\vee}^{\mu^\vee}$ is the multiplicity of $(\lambda^\vee, \nu^\vee, \mu^\vee)$ over $(\pi_2\times \hw_2)^t$, {\em i.e.},
  \[
    c_{\lambda^\vee, \nu^\vee}^{\mu^\vee}=\dim \mathbb{C}\left[\left(M^{(2)}\right)^t_{\lambda^\vee,\nu^\vee,\mu^\vee}\right].
  \]
\end{Thm}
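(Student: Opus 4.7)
My plan is to deduce this theorem directly from the monoidal-functor package established in Theorems \ref{functor} and \ref{functor-compo}, together with the identification ${\bm H}\star {\bm H}=(M^{(2)},\overline{\Delta}_2,\hw_2,\pi_2)$ recorded in Example \ref{H*H}. The guiding idea is that $\mathcal{V}$ turns $\star$-products of the ``trivial'' multiplicity ${\bm H}$ into tensor products of regular representations, so the tropical fibers of $H\star H$ literally compute tensor multiplicities.

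First I would compute $\mathcal{V}({\bm H})$ for ${\bm H}=(H,0,\id,0)$. Since $\Phi=0$ is trivially positive and $\hw_H=\id$, for every dominant coweight $\lambda^\vee$ the tropical fiber $H^t_{\lambda^\vee}$ is a single point; hence
\[
  \mathcal{V}({\bm H})=\bigoplus_{\lambda^\vee\in X_*^+(H)}V_{\lambda^\vee},
\]
recovering the formula \eqref{intro:eq:1} from the introduction. Since the decoration torus $S_H$ is trivial, for ${\bm M}_1={\bm M}_2={\bm H}$ the $\xi_i$-labels in Theorem \ref{functor-compo} disappear, and the theorem specializes to the natural isomorphism of $G^\vee$-modules
\[
  \mathcal{V}_{\lambda^\vee,\nu^\vee}({\bm H}\star{\bm H})\cong I_{\lambda^\vee}(\mathcal{V}({\bm H}))\otimes I_{\nu^\vee}(\mathcal{V}({\bm H}))=V_{\lambda^\vee}\otimes V_{\nu^\vee}.
\]

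Next, unwinding the definition \eqref{typcom} of the typical component for ${\bm H}\star{\bm H}=(M^{(2)},\overline{\Delta}_2,\hw_2,\pi_2)$, where the additional fibration $\pi_2$ records $(\hw_H,\hw_H)$ and $\hw_2$ records the total highest weight, I get
\[
  \mathcal{V}_{\lambda^\vee,\nu^\vee}({\bm H}\star{\bm H})=\bigoplus_{\mu^\vee\in X_*^+(H)}\mathbb{C}\!\left[(M^{(2)})^t_{\lambda^\vee,\nu^\vee,\mu^\vee}\right]\otimes V_{\mu^\vee}.
\]
Comparing the $\mu^\vee$-isotypic components of the two expressions for $\mathcal{V}_{\lambda^\vee,\nu^\vee}({\bm H}\star{\bm H})$ yields
\[
  \dim\mathbb{C}\!\left[(M^{(2)})^t_{\lambda^\vee,\nu^\vee,\mu^\vee}\right]=\dim\Hom_{G^\vee}(V_{\mu^\vee},V_{\lambda^\vee}\otimes V_{\nu^\vee})=c^{\mu^\vee}_{\lambda^\vee,\nu^\vee},
\]
which is the desired formula. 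Finiteness of the geometric multiplicity, i.e. finiteness of $(\pi_2\times\hw_2)^t$ in the sense of Definition \ref{finite}, falls out for free: since $\dim(V_{\lambda^\vee}\otimes V_{\nu^\vee})<\infty$, the isotypic multiplicities $c^{\mu^\vee}_{\lambda^\vee,\nu^\vee}$ are all finite, and vanish outside a finite set of $\mu^\vee$.

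The only substantive point beyond bookkeeping is verifying that the natural isomorphism of Theorem \ref{functor-compo} respects the labeling conventions for ${\bm H}\star{\bm H}$; in particular one needs the identification of $\pi_{H\star H}$ with the pair $(\hw_H,\hw_H)$ supplied by Definition \ref{mulg} and the identification of the outer $\hw_{H\star H}$-coordinate with $\mu^\vee$. The anticipated main obstacle is therefore not in this theorem itself, but in the input Theorem \ref{functor-compo}: that result carries the actual analytic content (that tropical fibers on $\star$-products really correspond to isotypic pieces on the representation side). Once that is in hand, the present theorem is an immediate specialization to ${\bm M}_1={\bm M}_2={\bm H}$.
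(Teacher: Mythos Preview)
Your proposal is correct and follows essentially the same approach as the paper: compute $\mathcal{V}({\bm H})$, apply Theorem \ref{functor-compo} with ${\bm M}_1={\bm M}_2={\bm H}$ to get $\mathcal{V}_{\lambda^\vee,\nu^\vee}({\bm H}\star{\bm H})\cong V_{\lambda^\vee}\otimes V_{\nu^\vee}$, and compare with the definition \eqref{typcom} of the typical component. Your extra remarks on finiteness and on the labeling conventions are fine and only make explicit what the paper leaves implicit.
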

\begin{proof}
  Note that ${\bm H}:=(H, 0, \id, 0)$ is an object in the category $\mul_G^+$. By definition, 
  \[
    \mathcal{V}({\bm H})=\bigoplus_{\lambda^\vee\in X_*^+(H)} V_{\lambda^\vee}.
  \]
  By Proposition \ref{posofg2}, the geometric multiplicity ${\bm H}\star {\bm H}=(M^{(2)}, \overline{\Delta}_2, \hw_{2}, \pi_{2})$ is positive. Applying Theorem \ref{functor-compo} to ${\bm H}\star {\bm H}$: 
  \[
    \mathcal{V}_{\lambda^\vee,\nu^\vee}({\bm H}\star {\bm H})\cong \mathcal{V}_{\lambda^\vee}({\bm H})\otimes \mathcal{V}_{\nu^\vee}({\bm H})=V_{\lambda^\vee}\otimes V_{\nu^\vee}. 
  \]
  Together with the definition \eqref{typcom} of $\mathcal{V}_{\lambda^\vee,\nu^\vee}({\bm H}\star {\bm H})$, one gets:
  \[
    V_{\lambda^\vee}\otimes V_{\nu^\vee}\cong \mathcal{V}_{\lambda^\vee,\nu^\vee}({\bm H}\star {\bm H})=\bigoplus_{\lambda\in X_*^+(H)} \mathbb{C}\left[\left(M^{(2)}\right)^t_{\lambda^\vee,\nu^\vee,\mu^\vee}\right]\otimes V_{\mu^\vee},
  \]
  which gives the statement we need.
\end{proof}
Similarly, for $n\geqslant 2$, denote by $c_{\lambda_1,\dots,\lambda_n}^\mu$ the higher tensor multiplicities:
\[
   \bigotimes_{i=1}^{n} V_{\lambda_i}=\bigoplus_{\mu} c_{\lambda_1,\dots,\lambda_n}^\mu V_{\mu}.
\]
\begin{Thm}\label{n-multi-variety}
  For $n\geqslant 2$, the positive geometric multiplicity $(M^{(n)}, \overline{\Delta}_n, \hw_n,\pi_n)$ is finite and the tensor multiplicity $c_{\lambda_1^\vee,\dots,\lambda_n^\vee}^{\mu^\vee}$ the multiplicity of $(\lambda_1^\vee,\dots,\lambda_n^\vee,\mu^\vee)$ over $(\pi_n\times \hw_n)^t$, {\em i.e.},
  \[
    c_{\lambda_1^\vee,\dots,\lambda_n^\vee}^{\mu^\vee}= \dim \mathbb{C}\left[\left(M^{(n)}\right)^t_{\lambda_1^\vee,\ldots,\lambda_{n}^\vee,\mu^\vee}\right].
  \]
\end{Thm}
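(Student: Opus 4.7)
The plan is to proceed by induction on $n$, with the base case $n=2$ already established in Theorem \ref{multi-variety}. The inductive step rests on the observation (implicit in Example \ref{H*H2}) that $M^{(n)} = M^{(n-1)} \star {\bm H}$ as positive geometric multiplicities in $\mul_G^+$; positivity of the product is guaranteed by Main Theorem \ref{positiveasso} together with the positivity of ${\bm H}$.

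For the inductive step, I would apply Theorem \ref{functor-compo} to the pair $({\bm M}_1,{\bm M}_2) = (M^{(n-1)}, {\bm H})$. Setting $\xi_1 = (\lambda_1^\vee, \ldots, \lambda_{n-1}^\vee) \in X_*(S_{M^{(n-1)}})$ and using that $\mathcal{V}({\bm H}) = \bigoplus_{\lambda^\vee} V_{\lambda^\vee}$, the isomorphism \eqref{intro-eq1} becomes
\[
  \mathcal{V}_{\xi_1, 0, \mu'^\vee, \lambda_n^\vee}\bigl(M^{(n-1)} \star {\bm H}\bigr) \cong I_{\mu'^\vee}\bigl(\mathcal{V}_{\xi_1}(M^{(n-1)})\bigr) \otimes V_{\lambda_n^\vee}.
\]
By the inductive hypothesis, $\mathcal{V}_{\xi_1}(M^{(n-1)}) \cong V_{\lambda_1^\vee} \otimes \cdots \otimes V_{\lambda_{n-1}^\vee}$, so the $\mu'^\vee$-isotypic component contributes $c_{\lambda_1^\vee, \ldots, \lambda_{n-1}^\vee}^{\mu'^\vee}\,V_{\mu'^\vee}$. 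Summing over the intermediate highest weight $\mu'^\vee$ collapses the total to $V_{\lambda_1^\vee} \otimes \cdots \otimes V_{\lambda_n^\vee}$, so that reading off the coefficient of $V_{\mu^\vee}$ (indexed by $\hw_n$) produces $c_{\lambda_1^\vee, \ldots, \lambda_n^\vee}^{\mu^\vee}$.

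The main bookkeeping hurdle will be reconciling parameter spaces, since the formal parameter space $X_*(S_{M^{(n)}}) \times X_*^+(H)$ arising from iterated $\star$ products is strictly larger than the $(n+1)$-tuple $(\lambda_1^\vee, \ldots, \lambda_n^\vee, \mu^\vee)$ indexing the theorem. Inspection of the $\pi_{M_1 \star M_2}$ formula in Definition \ref{mulg} shows that the extra coordinates of $S_{M^{(n)}}$ record precisely the intermediate highest weights created by nesting, so the tropical fiber at $(\lambda_1^\vee, \ldots, \lambda_n^\vee, \mu^\vee)$ is the disjoint union over those intermediate weights of the formal fibers appearing on the left-hand side of the displayed isomorphism. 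Finiteness of $(\pi_n \times \hw_n)^t$ then propagates at each inductive step from Theorem \ref{multi-variety} combined with the finite-dimensionality of tensor products of irreducible $G^\vee$-modules.
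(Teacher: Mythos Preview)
Your proposal is correct and matches the approach the paper intends: the paper does not write out an explicit proof of Theorem~\ref{n-multi-variety}, but the introduction spells out exactly your argument (``by applying this argument repeatedly to the geometric multiplicities $H^{\star n}$, we get an isomorphism of $G^\vee$-modules $\mathcal{V}_{\lambda_1^\vee,\ldots,\lambda_n^\vee}(H^{\star n})\cong V_{\lambda_1^\vee}\otimes\cdots\otimes V_{\lambda_n^\vee}$''), i.e., induction via $M^{(n)}=M^{(n-1)}\star{\bm H}$ and Theorem~\ref{functor-compo}. Your identification of the bookkeeping issue---that the formal torus $S_{M^{(n)}}$ from Definition~\ref{mulg} is larger than $H^n$, the extra coordinates recording intermediate highest weights over which one must take a disjoint union---is exactly the point that needs care, and your resolution is the right one.
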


\section{Combinatoric Expressions of Tensor Multiplicities}\label{combforten}

\subsection{Preliminary on representation theory}

Recall that the coordinate algebra $\mathbb{Q}[G]$ can be realized as certain subalgebra of algebra $U(\mathfrak{g})^* := \Hom_ \mathbb{Q}(U(\mathfrak{g}),\mathbb{Q})$ such that the evaluation pairing $(f,x) \to f(x)$ for $f\in  \mathbb{Q}[G]$ and $x\in U(\mathfrak{g})$ is non-degenerate. This turns $\mathbb{Q}[G]$ into a $U(\mathfrak{g})\otimes U(\mathfrak{g})$-module in the natural way. For $x\in \mathfrak{n}\oplus \mathfrak{n}_-$, 
\[
  ((x,\mathbbm{1})f)(g):=\frac{d}{dt}\Big|_{t=0}f\left(\exp(tx^T) g\right),\quad ((\mathbbm{1}, x)f)(g):=\frac{d}{dt}\Big|_{t=0}f\left(g\exp(tx) \right), 
\]
where $\mathbbm{1}$ is the unite in the algebra $U(\mathfrak{g})$. Note that both actions of $U(\mathfrak{g})$ are left actions. To distinguish them, let us denote by $U^L(\mathfrak{g})$ (resp. $U^R(\mathfrak{g})$) for the action of $U(\mathfrak{g})\otimes \mathbbm{1}$ (resp. $\mathbbm{1}\otimes U(\mathfrak{g})$). By algebraic Peter-Weyl Theorem, we have the following $U(\mathfrak{g})\otimes U(\mathfrak{g})$-modules isomorphism
\[
  \mathbb{Q}[G]\cong \bigoplus_{\lambda\in X^*_+(H)}V_{\lambda}\otimes V'_{\lambda},
\]
where $V_{\lambda}$ (resp. $V'_{\lambda}$) is the irreducible $U^L(\mathfrak{g})$ (resp. $U^R(\mathfrak{g})$) module with highest weight $\lambda$. Here as function on $G$, an element $v\otimes v'\in V_\lambda\otimes V'_\lambda$ evaluate at $g\in G$ by
\[
    v\otimes v'(g)=\langle v, g.v' \rangle,
\]
where $g.v'$ is the action of $g$ on $v'$, and $\langle \cdot,\cdot\rangle$ is unique paring such that $\langle v_\lambda,v_\lambda\rangle=1$ and $\langle v, g.v' \rangle=\langle (g^T).v, v' \rangle$. Again by algebraic Peter-Weyl Theorem, one finds
\begin{equation}\label{G/U}
  \mathbb{Q}[G]^U\cong \bigoplus_{\lambda\in X^*_+(H)}V_{\lambda}\otimes v'_\lambda. 
\end{equation}
Note that $\mathbb{Q}[U^-]$ is a $U(\mathfrak{n}_-)\otimes U(\mathfrak{n}_-)$-module. For each $f\in \mathbb{Q}[G]$, denote by $\pi^-\colon \mathbb{Q}[G]\to \mathbb{Q}[U^-]$ the restriction $f|_{U^-}$ of $f$ to $U^-$. The following is immediate:
\begin{Lem}\label{PRV}
  For any dominant integral weight $\lambda$, one has:
  \begin{itemize}
    \item The restriction of $\pi^-$ to $V_\lambda\otimes v'_\lambda$ is an injective homomorphism of $U^{L}(\mathfrak{g})$-module;

    \item The image of $V_\lambda\otimes v'_\lambda$ is described by
      \[
        \pi^-(V_\lambda\otimes v'_\lambda)=\left\{f_-\in \mathbb{Q}[U^-]  \ \Big|\  \left(e_i^*\right)^{\langle\lambda,\alpha_i^\vee \rangle +1}  f_-=0, \forall i\in I\right\},
      \]
      where $(e_i^*f)(u_-):=\dfrac{d}{dt}\Big|_{t=0}f\left(u_-\exp(tf_i)\right)$.
  \end{itemize}
\end{Lem}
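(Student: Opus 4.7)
The plan is to identify $V_\lambda\otimes v'_\lambda$ as the $\lambda$-weight component of $\mathbb{Q}[G]^U = \mathbb{Q}[G/U]$ under the right $H$-action, and to analyze $\pi^-$ via the open dense Gauss cell $G_0 = U^- H U\subset G$.

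For the first bullet, any $f\in V_\lambda\otimes v'_\lambda$ is right $U$-invariant and satisfies $f(gh) = \lambda(h)f(g)$ because $v'_\lambda$ has weight $\lambda$. On $G_0$ the unique Gauss factorization yields $f(u^- h u) = \lambda(h)\pi^-(f)(u^-)$, so $f$ is determined by $\pi^-(f)$ on the dense set $G_0$ and hence on $G$; this gives injectivity. For $U^L(\mathfrak{g})$-equivariance, note that $(e_i, \mathbbm{1})$ acts on $\mathbb{Q}[G]$ by left differentiation along $e_i^T = f_i\in\mathfrak{n}^-$, which preserves $U^-$, so restriction commutes with it; the Cartan part acts by the scalar $\lambda$; and for $(f_i,\mathbbm{1})$ one Gauss-decomposes $\exp(te_i)u^-$ to produce a rational formula. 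This endows $\pi^-(V_\lambda\otimes v'_\lambda)$ with the $U^L(\mathfrak{g})$-structure inherited from $V_\lambda\otimes v'_\lambda$, with respect to which $\pi^-$ is equivariant by construction.

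For the inclusion $\pi^-(V_\lambda\otimes v'_\lambda)\subseteq \{f_-\colon (e_i^*)^{n_i+1}f_- = 0,\ \forall i\}$ where $n_i := \langle\lambda,\alpha_i^\vee\rangle$, I would use that $V'_\lambda$ is the finite-dimensional irreducible of highest weight $\lambda$, so $f_i^{n_i+1}v'_\lambda = 0$ by standard $\mathfrak{sl}_2$-theory for the $i$-th root subgroup; consequently $(\mathbbm{1},f_i)^{n_i+1}$ annihilates $V_\lambda\otimes v'_\lambda$. Since $u^-\exp(tf_i)\in U^-$ for $u^-\in U^-$, restriction to $U^-$ commutes with $(\mathbbm{1},f_i)$, and the latter agrees with $e_i^*$ under $\pi^-$ by the definition given in the lemma, yielding the inclusion. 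For the reverse inclusion, the plan is to take $f_-$ satisfying the conditions and set $F(u^- h u) := \lambda(h)f_-(u^-)$ on $G_0$; this is well-defined, regular on $G_0$, right-$U$-invariant, and of right $H$-weight $\lambda$. If $F$ extends regularly to $G$, then by Peter--Weyl and weight considerations it lies in $V_\lambda\otimes v'_\lambda$. The task is to verify regularity across the codimension-one Schubert divisors comprising $G\setminus G_0$ (one for each $i\in I$); the pole order of $F$ along the divisor attached to $i$ is controlled by the maximal $n$ with $(e_i^*)^n f_-\neq 0$, and the annihilation hypotheses cut this off at $n\leq n_i$, allowing $F$ to extend.

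The main obstacle is the precise pole-order bound in the reverse inclusion. A cleaner parallel route I would pursue is a character/dimension argument: both sides contain $\pi^-(v_\lambda\otimes v'_\lambda)$, which equals the constant function $1$ on $U^-$ because $\langle v_\lambda, u^- v'_\lambda\rangle = \langle v_\lambda, v'_\lambda\rangle = 1$ by weight orthogonality (as $u^-$ only lowers weights, and $v_\lambda$ pairs nontrivially only with the weight $\lambda$ part). Both sides are stable under the induced $U^L(\mathfrak{g})$-action from the first bullet, so by the theorem of the highest weight the integrable cyclic highest-weight module generated by $1$ subject to the Serre-type relations $(e_i^*)^{n_i+1}\cdot 1 = 0$ is irreducible of dimension $\dim V_\lambda$; this matches $\dim\pi^-(V_\lambda\otimes v'_\lambda)$ from the first part by injectivity, forcing equality.
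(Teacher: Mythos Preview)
The paper gives no proof of this lemma; it is stated as ``immediate'' and labeled \texttt{PRV}, reflecting that it is a classical identification (the realization of $V_\lambda$ inside the contragredient Verma module, or equivalently inside $\mathbb{Q}[U^-]$). So there is nothing in the paper to compare against beyond the assertion that the result is standard.

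Your arguments for injectivity and for the forward inclusion are correct and are the standard ones: $f\in V_\lambda\otimes v'_\lambda$ is right $B$-equivariant of weight $\lambda$, hence determined on the dense Gauss cell by its restriction to $U^-$; and $(\mathbbm{1},f_i)^{n_i+1}$ kills $v'_\lambda$ by $\mathfrak{sl}_2$-theory, while $(\mathbbm{1},f_i)$ restricts to $e_i^*$ on $U^-$.

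The reverse inclusion is where your proposal has a genuine gap. In your approach~2 you argue that the cyclic $U^L(\mathfrak{g})$-submodule generated by the constant function $1$ is irreducible of dimension $\dim V_\lambda$ and equals the left-hand side. That is fine, but it only exhibits a copy of $V_\lambda$ \emph{inside} the right-hand side; it does not bound the right-hand side from above. The ``dimension matching'' you invoke is between the cyclic submodule and the left-hand side, not between the right-hand side and anything. You also assert that the right-hand side is stable under the $U^L(\mathfrak{g})$-action, but that action is only defined (polynomially) on the images $\pi^-(V_\mu\otimes v'_\mu)$; on all of $\mathbb{Q}[U^-]$ the raising operators act only as rational differential operators, so stability of the kernel of $(e_i^*)^{n_i+1}$ is not automatic.

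To close the gap you should either carry out your approach~1 honestly (the codimension-one complement of $G_0$ in $G$ is $\bigcup_i \{\Delta_{\omega_i}=0\}$, and the order of pole of $F=\lambda\cdot f_-$ along $\{\Delta_{\omega_i}=0\}$ is exactly the largest $n$ with $(e_i^*)^n f_-\neq 0$, minus $n_i$; the hypothesis then makes $F$ regular in codimension one, hence regular by normality of $G$), or argue via the contragredient Verma module: $\mathbb{Q}[U^-]$ with the grading placing $1$ in weight $\lambda$ and with the $e_i^*$-action is $M(\lambda)^\vee$, and the subspace cut out by $f_i^{n_i+1}=0$ for all $i$ is precisely the socle $V_\lambda\hookrightarrow M(\lambda)^\vee$, which gives the needed upper bound on dimension.
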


As a corollary, for any function $f\in \mathbb{Q}[U^-]$, there exists a dominant integral weight $\lambda$ such that
  \begin{equation}\label{lambda}
    f\in \pi^-(V_{\lambda}\otimes v'_\lambda)\subset \mathbb{Q}[U^-].
  \end{equation}
Then by Lemma \ref{PRV}, there exists $F_\lambda\in \mathbb{Q}[G]^U$ such that $F_\lambda|_{U^-}=f$. The function $F_{\lambda}$ will be called a lift of $f$. Note that the lift of $f$ is not unique. Among the set $\Lambda(f)$ of $\lambda$'s satisfying \eqref{lambda} for $f\in \mathbb{Q}[U^-]$, there exists a minimal one $\lambda^{\min}$, in the sense that $\langle\lambda^{\min}, \alpha_i^\vee \rangle \leqslant \langle\lambda, \alpha_i^\vee \rangle$ for all $i\in I$ and all $\lambda\in \Lambda(f)$. We refer to $F_{\lambda^{\min}}$ the {\em minimal lift} of $f$. In general, it is not easy to find $\lambda^{\min}$. While, if $f$ is of the form $\pi^-(v_{w_0\mu}\otimes v)$, we have
\begin{Pro}\label{lift1}
  For an integral dominant weight $\mu$ and an element $v_{w_0\mu}\otimes v\in V_\mu\otimes V'_{\mu}$, where $v_{w_0\mu}$ is the lowest weight vector in $V_\mu$ and $v$ is any vector in $V'_\mu$. The weight $\lambda^{\min}$ of the minimal lift for $\pi^-(v_{w_0\mu}\otimes v)$ is given by $\lambda^{\min}=\sum c_i\omega_i$, where $c_i$ is smallest number such that $f_i^{c_i+1}v= 0$.
\end{Pro}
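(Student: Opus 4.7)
My plan is to reduce the problem, via Lemma~\ref{PRV}, to a combinatorial condition on $v$. By that lemma, $f := \pi^-(v_{w_0\mu}\otimes v)$ belongs to $\pi^-(V_\lambda\otimes v'_\lambda)$ exactly when $(e_i^*)^{\langle\lambda,\alpha_i^\vee\rangle+1} f = 0$ for every $i\in I$. Writing a dominant weight as $\lambda = \sum_i d_i\omega_i$ reformulates this as $(e_i^*)^{d_i+1}f = 0$ for all $i$, and the componentwise-minimal dominant choice is $\lambda^{\min} = \sum_i c_i\omega_i$ with
\[
    c_i = \min\{k\geqslant 0 : (e_i^*)^{k+1}f = 0\}.
\]
So the proposition reduces to the identity $c_i = \min\{k : f_i^{k+1}v = 0\}$ for each $i$.

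The upper bound $c_i \leqslant \min\{k : f_i^{k+1}v = 0\}$ is a direct computation. From the definition of $e_i^*$ and the evaluation pairing,
\[
    (e_i^* f)(u_-) = \tfrac{d}{dt}\Big|_{t=0}\langle v_{w_0\mu}, u_-\exp(tf_i)v\rangle = \langle v_{w_0\mu}, u_- f_i v\rangle,
\]
so by induction $(e_i^*)^k f(u_-) = \langle v_{w_0\mu}, u_- f_i^k v\rangle$ for every $k\geqslant 0$. If $f_i^{k+1}v = 0$ then $(e_i^*)^{k+1}f$ vanishes identically on $U^-$.

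For the reverse inequality I would establish the following key lemma: \emph{for any nonzero $w\in V'_\mu$ the function $u_-\mapsto \langle v_{w_0\mu}, u_- w\rangle$ does not vanish identically on $U^-$.} Applying this with $w = f_i^{k+1}v$, which is nonzero whenever $k < \min\{j : f_i^{j+1}v=0\}$, would give $(e_i^*)^{k+1}f\not\equiv 0$ for such $k$, hence $c_i \geqslant \min\{k : f_i^{k+1}v = 0\}$. To prove the key lemma, suppose toward contradiction that the function vanishes. Since $U^-$ is generated by the one-parameter subgroups $y_j(t) = \exp(tf_j)$, expanding $u_-$ as products of these and extracting all mixed Taylor coefficients at the identity yields $\langle v_{w_0\mu}, Yw\rangle = 0$ for every $Y\in U(\mathfrak{n}_-)$. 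The functional $\langle v_{w_0\mu},\cdot\rangle$ on $V'_\mu$ is nondegenerate on the one-dimensional lowest weight space $(V'_\mu)_{w_0\mu}$ and vanishes on every other weight space, so this says $U(\mathfrak{n}_-)w$ has trivial component in $(V'_\mu)_{w_0\mu}$. However, $U(\mathfrak{n}_-)w$ is a nonzero finite-dimensional module over the nilpotent Lie algebra $\mathfrak{n}_-$, so Engel's theorem supplies a nonzero $\mathfrak{n}_-$-annihilated vector $w'\in U(\mathfrak{n}_-)w$; by PBW one has $U(\mathfrak{g})w' = U(\mathfrak{n}_+)w'$, and the irreducibility of $V'_\mu$ forces the weight of $w'$ to be the lowest weight, i.e., $w'\in (V'_\mu)_{w_0\mu}\setminus\{0\}$, contradicting the previous sentence.

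The main obstacle is the key lemma: the subspace $U(\mathfrak{n}_-)w$ is not a $\mathfrak{g}$-submodule in general, so one cannot invoke irreducibility of $V'_\mu$ directly. The correct tool is Engel's theorem combined with the uniqueness (up to scalar) of the $\mathfrak{n}_-$-invariant vector in a finite-dimensional irreducible $\mathfrak{g}$-representation. Once the key lemma is in hand, everything else is a routine unfolding of definitions.
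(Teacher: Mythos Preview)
Your argument is correct and follows the same overall reduction as the paper: both rewrite $(e_i^*)^{k}f(u_-)$ as $\langle v_{w_0\mu}, u_- f_i^{k}v\rangle$ via Lemma~\ref{PRV} and then establish the ``key lemma'' that $u_-\mapsto \langle v_{w_0\mu}, u_- w\rangle$ vanishes identically iff $w=0$.

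The only difference is in how the key lemma is proved. You go through Engel's theorem to produce an $\mathfrak{n}_-$-annihilated vector in $U(\mathfrak{n}_-)w$ and then argue it must lie in the lowest weight space, reaching a contradiction. The paper instead uses the adjointness of the pairing: since $\langle v, g\cdot v'\rangle = \langle g^T\cdot v, v'\rangle$, one has $\langle v_{w_0\mu}, u_- w\rangle = \langle (u_-)^T v_{w_0\mu}, w\rangle$; as $u_-$ ranges over $U^-$ the transpose $(u_-)^T$ ranges over $U$, and because $v_{w_0\mu}$ is cyclic for $U$ (equivalently for $U(\mathfrak{n})$) the vectors $(u_-)^T v_{w_0\mu}$ span $V_\mu$, whence nondegeneracy of the pairing forces $w=0$. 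The paper's route is a one-line trick exploiting the specific adjointness built into the pairing on $\mathbb{Q}[G]$, while yours is a self-contained representation-theoretic argument that would work for any nondegenerate pairing, not just one satisfying $\langle g^T v, v'\rangle = \langle v, gv'\rangle$.
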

\begin{proof}
 We need to find the minimal $c_i$ such that
  \[
    \left(\left(e_i^*\right)^{c_i+1}\pi^-(v_{w_0\mu}\otimes v)\right)(u_-)= 0,\quad \forall u_-\in U_-,
  \]
  which is equivalent to find minimal $c_i$ such that
  \begin{equation}\label{mini}
    \langle v_{w_0\mu}, u_- f_i^{c_i+1} v \rangle=0, \forall u_-\in U_-.  
  \end{equation}
  Since $v_{w_0\omega_i}$ is the lowest weight vector, $(u_-)^Tv_{w_0\mu}$ runs over the whole module $V_{\mu}$. Then the condition \eqref{mini} boils down to find minimal $c_i$ such that $f_i^{c_i+1}v=0$, which is what we need to show here.  
\end{proof}

Next, let us introduce the so called generalized minors on $G$. For a dominant weight $\mu\in X^*_+(H)$ of $G$, the \emph{principal minor} $\Delta_\mu$ is a regular function $G\to \mathbb{G}_{\bf a}$ uniquely determined by
\[
  \Delta_\mu(u_-au):=\mu(a), \text{~for any~} u_-\in U^-, a\in H, u\in U.
\]
For any two weights $\gamma$ and $\delta$ of the form $\gamma=w\mu$ and $\delta=v\mu$, where $w,v\in W$, the {\em generalized minor} $\Delta_{w\mu,v\mu}$ is a regular function on $G$ given by
\[
  \Delta_{\gamma,\delta}(g)=\Delta_{w\mu,v\mu}(g):=\Delta_\mu(\overline w^{\,-1}g\overline v), \text{~for all~} g\in G.
\]
If $G={\rm SL}_n$, the generalized minors are minors.  Define the  {\em fundamental coweights} $\omega_i^\vee$ as the dual of simple roots: $\omega_i^\vee(\alpha_j)=\delta_{ij}$, and the {\em fundamental weights} $\omega_i$ as the dual of simple coroots: $\omega_i(\alpha_j^\vee)=\delta_{ij}$. Note that the fundamental coweights of $\mathfrak{g}$ are fundamental weights of the Langlands dual Lie algebra $\mathfrak{g}^\vee$ of $\mathfrak{g}$. Using generalized minors, the function $\Phi_{BK}$ can be written as\cite[Corollary 1.25]{BKII}: 
\begin{equation}\label{BKp}
  \Phi_{BK}=\sum_{i\in I}\frac{\Delta_{w_0\omega_i,s_i\omega_i}+\Delta_{w_0s_i\omega_i,\omega_i}}{\Delta_{w_0\omega_i,\omega_i}}\in \mathbb{Q}[G^{w_0}]. 
\end{equation}

For a dominant integral weight $\lambda \in X^*_+(H)$, which can be written uniquely as
\[
  \lambda = \sum_{i\in I} c_i(\lambda) \omega_i, \qquad c_i(\lambda)\in \mathbb{Z}_{\geqslant 0},
\]
the principal minor $\Delta_{\lambda}$ is of the form
\begin{equation}\label{deltadef}
  \Delta_{\lambda} = \prod_{i\in I} \Delta_{\omega_i}^{c_i(\lambda)}.
\end{equation}
One can check that for  $h,h'\in H$ and $e_i$ in the Chevalley generators
\[
  (h, h')\cdot \Delta_{\lambda}= h^{\lambda}{h'}^{\lambda} \Delta_{\lambda},  \qquad (e_i, \mathbbm{1})\cdot \Delta_{\lambda}= (\mathbbm{1}, e_i)\cdot\Delta_{\lambda}=0 \text{~for~} i\in I,
\]
thus we conclude that $\Delta_{\lambda}$ is the highest weight vector in the $U(\mathfrak{g})\otimes U(\mathfrak{g})$-module $V_{\lambda}\otimes V'_{\lambda}\subset \mathbb{Q}[G]$. 

Next we will use Proposition \ref{lift1} to find the weight of the minimal lift of $\Delta_{w_0\omega_i,s_i\omega_i}$, which is not $U$-invariant in $\Phi_{BK}$:
\begin{Pro}\label{lift}
  Denote by $F_{\lambda_i}$ the minimal lift of $\Delta_{w_0\omega_i, s_i\omega_i}\big|_{U^-}$. Then $\lambda_i=\omega_i+s_i\omega_i$.
\end{Pro}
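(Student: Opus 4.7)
The plan is to realize $\Delta_{w_0\omega_i, s_i\omega_i}$, viewed in $\mathbb{Q}[G]$, as a concrete matrix coefficient in $V_{\omega_i}\otimes V'_{\omega_i}$ and then invoke Proposition \ref{lift1}. Using the definition $\Delta_{w\mu,v\mu}(g)=\Delta_\mu(\overline{w}^{-1}g\overline{v})$ and the fact $\overline{s_i}^T=\overline{s_i}^{-1}$ (checked inside $\mathrm{SL}_2$ and extended to all of $W$ via braid relations), I compute
\[
\Delta_{w_0\omega_i,s_i\omega_i}(g)=\langle v_{\omega_i},\,\overline{w_0}^{-1}g\,\overline{s_i}\cdot v'_{\omega_i}\rangle=\langle \overline{w_0}\cdot v_{\omega_i},\,g\cdot(\overline{s_i}\cdot v'_{\omega_i})\rangle=(v_{w_0\omega_i}\otimes v'_{s_i\omega_i})(g),
\]
where $v'_{s_i\omega_i}\in V'_{\omega_i}$ is the weight-$s_i\omega_i$ vector, and by dimension of the relevant weight space it is proportional to $f_i v'_{\omega_i}$. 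Since the left factor $v_{w_0\omega_i}$ is the lowest weight vector, Proposition \ref{lift1} applies directly.

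It then suffices to compute, for each $j\in I$, the smallest $c_j$ with $f_j^{c_j+1}(f_i v'_{\omega_i})=0$, as then $\lambda_i=\sum_j c_j\omega_j$. For $j=i$, I use that $\langle \omega_i,\alpha_i^\vee\rangle=1$ makes the $\mathfrak{sl}_2^{(i)}$-module generated by $v'_{\omega_i}$ two-dimensional, so $f_i^2 v'_{\omega_i}=0$ and $c_i=0$. For $j\neq i$, my claim is that $f_i v'_{\omega_i}$ is an $\mathfrak{sl}_2^{(j)}$-highest weight vector: indeed
\[
e_j(f_i v'_{\omega_i})=f_i(e_j v'_{\omega_i})+[e_j,f_i]v'_{\omega_i}=0,
\]
using $e_j v'_{\omega_i}=0$ and $[e_j,f_i]=0$ for $i\neq j$. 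Its $\alpha_j^\vee$-weight is $\langle s_i\omega_i,\alpha_j^\vee\rangle=-\langle\alpha_i,\alpha_j^\vee\rangle\geqslant 0$, so the resulting irreducible $\mathfrak{sl}_2^{(j)}$-submodule has dimension $1-\langle\alpha_i,\alpha_j^\vee\rangle$ and hence $c_j=-\langle\alpha_i,\alpha_j^\vee\rangle$.

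Assembling: $c_j=2\delta_{ij}-\langle\alpha_i,\alpha_j^\vee\rangle$ for all $j$, which is exactly the $\omega_j$-coefficient of $2\omega_i-\alpha_i=\omega_i+s_i\omega_i$. This matches the claim $\lambda_i=\omega_i+s_i\omega_i$.

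The main technical hurdle is the first step: cleanly identifying the generalized minor $\Delta_{w_0\omega_i,s_i\omega_i}$ as the matrix coefficient $v_{w_0\omega_i}\otimes v'_{s_i\omega_i}$, which is what makes Proposition \ref{lift1} applicable. Once this identification is in place, the computation of the $c_j$'s reduces to the well-known rank-one and rank-two representation theory used above (Serre-type arguments could substitute for the $\mathfrak{sl}_2^{(j)}$-highest weight observation, giving the same bound via $f_j^{1-\langle\alpha_i,\alpha_j^\vee\rangle}f_i v'_{\omega_i}=0$, with sharpness ensured by the irreducibility of the $\mathfrak{sl}_2^{(j)}$-cyclic module).
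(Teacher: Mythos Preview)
Your proof is correct and follows essentially the same approach as the paper: identify $\Delta_{w_0\omega_i,s_i\omega_i}$ with the matrix coefficient $v_{w_0\omega_i}\otimes v'_{s_i\omega_i}$, apply Proposition~\ref{lift1}, and compute the $c_j$ via $\mathfrak{sl}_2$-theory on the vector $f_i v'_{\omega_i}$. The only cosmetic difference is that the paper packages the $j\neq i$ computation using the extremal-weight isomorphism $f_j^{\langle s_i\omega_i,\alpha_j^\vee\rangle}\colon V_{\omega_i}(s_i\omega_i)\xrightarrow{\sim}V_{\omega_i}(s_js_i\omega_i)$, whereas you verify directly that $f_i v'_{\omega_i}$ is an $\mathfrak{sl}_2^{(j)}$-highest weight vector; these are equivalent.
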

\begin{proof}
  Note that $\Delta_{w_0\omega_i, s_i\omega_i}=v_{w_0\omega_i}\otimes v'_{s_i\omega_i}$.  Fix the index $i$, what we need to find is the minimal $c_j$ as in Proposition \ref{lift1}.
  Recall that for any integral dominant weight $\mu$ and $w\in W$, the following map
  \begin{equation}\label{extr}
    f_{j}^{\langle w\mu, \alpha_j^\vee\rangle}\colon V_{\mu}(w\mu)\to V_{\mu}(s_jw\mu)
  \end{equation}
  is an isomorphism if $\ell(s_jw)=\ell(w)+1$. Thus $v'_{s_i\omega_i}$ is a multiple of $f_iv'_{\omega_i}$, then we need to find minimal $c_j$ such that
  \[
    f_j^{c_j+1}f_iv'_{\omega_i}=0.
  \]
  By \eqref{extr}, one find $c_j=-a_{ji}$ for $j\neq i$ and $c_i=0$.  Thus we get $\lambda_i=-\sum_{j\neq i}a_{ji}\omega_j=\omega_i+s_i\omega_i$.  
\end{proof}

\begin{Cor}
  The function $F_{\lambda_i}$ is bi-homogeneous of degree $(w_0\omega_i+\omega_i,s_i\omega_i+\omega_i)$.
\end{Cor}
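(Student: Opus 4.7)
The plan is to extract both weights directly from the Peter--Weyl description of the minimal lift: the right weight is read off the summand in which $F_{\lambda_i}$ lives, while the left weight is pinned down by matching conjugation characters of $H$ on $U^-$.

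By Proposition \ref{lift}, $F_{\lambda_i}$ lies in the Peter--Weyl summand $V_{\lambda_i}\otimes v'_{\lambda_i}\subset \mathbb{Q}[G]^U$ with $\lambda_i=\omega_i+s_i\omega_i$. Since $v'_{\lambda_i}$ is the highest weight vector of $V'_{\lambda_i}$, a direct evaluation of $(v\otimes v'_{\lambda_i})(gh)=\langle v,gh\cdot v'_{\lambda_i}\rangle$ shows that every element of this summand is a right-$H$ eigenfunction of weight $\lambda_i=s_i\omega_i+\omega_i$. This yields the right-degree half of the claim for free.

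For the left weight, write $F_{\lambda_i}=v\otimes v'_{\lambda_i}$ with $v\in V_{\lambda_i}$ and decompose $v=\sum_\mu v_\mu$ into weight components, so that each summand $v_\mu\otimes v'_{\lambda_i}$ is bi-homogeneous of bi-weight $(\mu,\lambda_i)$ (same computation as above, using the transpose convention of the left $U(\mathfrak{g})$-action). I would then evaluate the defining identity $F_{\lambda_i}|_{U^-}=\Delta_{w_0\omega_i,s_i\omega_i}|_{U^-}$ at the conjugate $hu_-h^{-1}$ for arbitrary $h\in H$. The right-hand side, being the restriction of a function of bi-weight $(w_0\omega_i,s_i\omega_i)$ on $G$, rescales by the single character $h^{w_0\omega_i-s_i\omega_i}$; the left-hand side rescales as $\sum_\mu h^{\mu-\lambda_i}\,(v_\mu\otimes v'_{\lambda_i})|_{U^-}(u_-)$. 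Linear independence of characters of $H$ forces $(v_\mu\otimes v'_{\lambda_i})|_{U^-}=0$ whenever $\mu-\lambda_i\neq w_0\omega_i-s_i\omega_i$, and injectivity of restriction on $V_{\lambda_i}\otimes v'_{\lambda_i}$ (Lemma \ref{PRV}) upgrades this to $v_\mu=0$ for such $\mu$. Hence $v$ is a pure weight vector of weight $\mu=\lambda_i+w_0\omega_i-s_i\omega_i=w_0\omega_i+\omega_i$.

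Assembling the two computations, $F_{\lambda_i}$ is bi-homogeneous of bi-degree $(w_0\omega_i+\omega_i,\,s_i\omega_i+\omega_i)$, as claimed. I do not expect a real obstacle: each step is either a semi-invariance computation in the Peter--Weyl model or an invocation of the injectivity in Lemma \ref{PRV}. The only bookkeeping issue is to keep track of the transpose in the definition $((x,\mathbbm{1})f)(g)=\tfrac{d}{dt}\big|_{0}f(\exp(tx^T)g)$ when translating between weights on $V_{\lambda_i}$ and left $H$-weights on $\mathbb{Q}[G]$.
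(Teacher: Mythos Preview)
Your argument is correct and shares the same core idea as the paper's: both extract the left weight from the known bi-weight $(w_0\omega_i,s_i\omega_i)$ of $\Delta_{w_0\omega_i,s_i\omega_i}$ via the $H$-conjugation action on $U^-$, and both read the right weight $\lambda_i$ off the Peter--Weyl summand.

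The execution differs slightly. The paper packages everything in a single identity on $G_0$: since $F_{\lambda_i}$ is right $U$-invariant and of right $H$-weight $\lambda_i$, one has $\Delta_{w_0\omega_i,s_i\omega_i}([g]_-)=F_{\lambda_i}(g)\Delta_{\lambda_i}^{-1}(g)$, and then the transformation rules $[hg]_-=h[g]_-h^{-1}$, $[gh]_-=[g]_-$ of the Gaussian decomposition immediately yield both scaling laws, hence the bi-degree $(w_0\omega_i-s_i\omega_i+\lambda_i,\lambda_i)$. You instead stay on $U^-$, decompose $v\in V_{\lambda_i}$ into weight components, and use linear independence of characters together with the injectivity of $\pi^-$ from Lemma~\ref{PRV} to kill the unwanted components. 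Your route is a bit more explicit about \emph{why} $F_{\lambda_i}$ is a pure left-weight vector (rather than just computing the scaling and asserting bi-homogeneity), at the cost of an extra paragraph; the paper's route is shorter but leaves that purity step implicit in the identity on $G_0$.
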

\begin{proof}
  By definition we have $\Delta_{w_0\omega_i,s_i\omega_i}|_{U^-}=F_{\lambda_i}|_{U^-}$. Since $F_{\lambda_i}$ is $U$-invariant, we get
  \[
    \Delta_{w_0\omega_i,s_i\omega_i}([g]_-)=F_{\lambda_i}(g)\Delta_{\lambda_i}^{-1}(g).
  \]
  By the uniqueness of Gaussian decomposition, one has $[hg]_-=h[g]_-h^{-1}$ and $[gh]_-=[g]_-$. Thus $F_{\lambda_i}$ is of bi-degree $(w_0\omega_i-s_i\omega_i+\lambda_i,\lambda_i)=(w_0\omega_i+\omega_i,s_i\omega_i+\omega_i)$.
\end{proof}

\begin{Ex}
  For $G=\SL_n$, we have $\lambda_i=\omega_{i-1}+\omega_{i+1}$. One can find the minimal lift of $\Delta_{w_0\omega_i,s_i\omega_i}$ by expanding each minor along the last column. For our convenience, we write them down for $n=4$
  \begin{align*}
    \Delta_{w_0\omega_1,s_1\omega_1}([g]_-)&=\frac{\Delta_{s_3s_2\omega_2,\omega_2}(g)}{\Delta_{\omega_2}(g)};\\
    \Delta_{w_0\omega_2,s_2\omega_2}([g]_-)&=\frac{\Delta_{s_2s_1\omega_1,\omega_1}(g)\Delta_{s_3\omega_3,\omega_3}(g)-\Delta_{w_0\omega_1,\omega_1}(g)\Delta_{\omega_3}(g)}{\Delta_{\omega_1+\omega_3}(g)};\\
    \Delta_{w_0\omega_3,s_3\omega_3}([g]_-)&=\frac{\Delta_{s_1s_2\omega_2,\omega_2}(g)}{\Delta_{\omega_2}(g)}.
  \end{align*}
\end{Ex}

\subsection{Combinatoric expressions of tensor multiplicities}
First of all, let us introduce the notion of $(\mathbf{i},K,L)$-trail, which is more general than the $\mathbf{i}$-trail introduced in \cite{BZ01}. For any finite dimensional representation $V$ of $\mathfrak{g}$, let $V=\oplus_{\gamma\in P}V(\gamma)$ be the weight decomposition. Let $P(V)$ be the set of weights of $V$, which is the set of elements $\gamma\in P$ such that $V(\gamma)\neq 0$.

Let $\gamma$ and $\delta$ be two weights (not necessary extremal) in $P(V)$. Given a decorated word $(\mathbf{i},K,L)$, let $ \mathbb{I}(\mathbf{i},K,L)=(j_1,\ldots,j_n)$ be the associated double word as before. An $(\mathbf{i},K,L)$-trail from $\gamma$ to $\delta$ in $V$ is a sequence of weights $\pi=(\gamma=\gamma_0,\gamma_1,\dots,\gamma_n=\delta)$ such that: 
\begin{itemize}
  \item[$(1)$] for $l= 1,\dots, n$, we have $\gamma_{l-1}-\gamma_l=\sign\left(j_l\right)c_l\alpha_{j_l}$ for some non-negative integer $c_l$;
  \item[$(2)$] $e^{(c_1)}_{j_1}\cdots e^{(c_n)}_{j_n}$ is a non-zero linear map from $V(\delta)$ to $V(\gamma)$,
\end{itemize}
where we use the (decorated) divided power
\[
  e_{i}^{(n)}:=
  \begin{dcases}
    e_{i}^{n}/n!, & \text{~if~} i>0\\
    f_{i}^{n}/n!, & \text{~if~} i<0
  \end{dcases}.
\]
Moreover, for a specific vector $f\in V(\delta)$, a $(\mathbf{i},K,L)$-trail {\em for $f$} is a $(\mathbf{i},K,L)$-trail satisfying
\begin{itemize}
  \item[$(2')$] $e^{(c_1)}_{j_1}\cdots e^{(c_n)}_{j_n}f$ is non-zero.
\end{itemize}
If $V$ is irreducible and the highest weights  $\gamma$ and $\delta$ are extremal, the condition $(2')$ is redundant since $\dim(V(\gamma))=1$. For a $(\mathbf{i},K,L)$-trail $\pi$ in $V_{\omega_i}$ from $\gamma$ to $\delta$, denote by $a_l$
\[
  a_l(\pi)=\left\{
  \begin{aligned}
    \dfrac{1}{2}(\gamma_{l-1}-\gamma_l)\alpha_{i_l}^\vee, &\quad \text{if~} j_l>0\\
    -\dfrac{1}{2}(\gamma_{l-1}+\gamma_l)\alpha_{i_l}^\vee, &\quad \text{if~} j_l<0
  \end{aligned}\right.
\]
\begin{Rmk}
  For the decoration $K=L=\emptyset$, the definition of $(\mathbf{i}, K,L)$-trail recovers the $\mathbf{i}$-trail in \cite[Definition 2.1]{BZ01} and we have $a_l=c_l$ which recovers \cite[Eq 2.1]{BZ01}.
\end{Rmk}

Recall that $\Delta_{w_0\omega_i, s_i\omega_i}$ has  minimal lift $F_{\lambda_i}\in V_{\lambda_i}$ with $\lambda_i=\omega_i+s_i\omega_i$, and $F_{\lambda_i}$ is bi-homogeneous of degree $(w_0\omega_i+\omega_i,\lambda_i)$. Denote by
\[
	f_{\lambda_i}:=F_{\lambda_i}\circ T.
\]
Recall that $v_K:=w_{\mathbf{i}_K}$ for a given decorated reduced word $(\mathbf{i} = (i_1, \dots, i_m),K,L)$.
\begin{Thm}\label{comb}
  Let $\lambda, \mu, \nu$ be three dominant weights for $\mathfrak{g}$ and $(\mathbf{i} = (i_1, \dots, i_m),K,L)$ be any decorated reduced word of $w_0$. Then the multiplicity $c_{\lambda, \nu}^{\mu}$ is equal to the number of integer $m$-tuples $(t_1, \dots, t_m)\in \mathbb{Z}^m$ satisfying the following conditions:
  \begin{itemize}
    \item $\sum t_lr_l\alpha_{i_l}=\lambda+\nu-\mu$, where $r_l=\prod_{k> l, j_k>0} s_{j_k}$ with product in decreasing order;

    \item $\sum a_l(\pi)t_l \geqslant 0$ for any $i$ and any $(\mathbf{i},K,L)$-trail $\pi$ from $v_k\omega_i^\vee$ to $s_i\omega_i^\vee$ in $V_{\omega_i^\vee}$;

    \item $\sum a_l(\pi)t_l \geqslant \langle \omega_i^\vee, \lambda+s_i\nu-\mu \rangle$ for any $i$ and any $(\mathbf{i},K,L)$-trail $\pi$ from $v_k\omega_i^\vee$ to $w_0s_i\omega_i^\vee$ in $V_{\omega_i^\vee}$;

    \item $\sum a_l(\pi)t_l \geqslant \langle \omega_i^\vee, s_i\lambda+\nu-\mu \rangle$ for any $i$ and any $(\mathbf{i},K,L)$-trail $\pi$ for $f_{\lambda_i^\vee}$ from $v_k \lambda_i^\vee$ to $w_0\omega_i^\vee+\omega_i^\vee$ in $V_{\lambda_i^\vee}$.
  \end{itemize}
\end{Thm}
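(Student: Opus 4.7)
The plan is to combine Theorem \ref{multi-variety} with the positive toric chart $\xi_{\mathbb{I}}$ of Proposition \ref{positiveonU} and a decorated-trail formula for the tropicalizations of generalized minors. By Theorem \ref{multi-variety},
\[
c_{\lambda^\vee,\nu^\vee}^{\mu^\vee}=\dim\mathbb{C}\bigl[(M^{(2)})^t_{\lambda^\vee,\nu^\vee,\mu^\vee}\bigr],
\]
so the task reduces to counting the lattice points of the tropical fiber cut out from $(M^{(2)},\overline{\Delta}_2)^t$ by the tropical equations fixing $\hw_2$ and $\pi_2$. A decorated reduced word $(\mathbf{i},K,L)\in\widehat{R}(w_0)$ yields via Proposition \ref{positiveonU} a positive toric chart $\xi_{\mathbb{I}}\colon\mathbb{T}^m\to L^{e,w_0}\hookrightarrow U$; combined with the identity chart on $H\times H$ it produces coordinates $(t_1,\dots,t_m,h_1,h_2)$ on $M^{(2)}$ whose tropicalization lives in $\mathbb{Z}^m\times X_*(H)\times X_*(H)$, and by Theorem \ref{positiveasso} the full tropical polytope is visible in these coordinates.

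Next I would translate each defining constraint into (in)equalities in the $t_l$. The tropical equations $\pi_2^t=(\lambda^\vee,\nu^\vee)$ just fix $h_1,h_2$; the remaining equation $\hw_2^t=\mu^\vee$ becomes the linear equality $\sum_l t_l\,r_l\alpha_{i_l}=\lambda+\nu-\mu$, where $r_l=\prod_{k>l,\,j_k>0}s_{j_k}$ records the conjugations of each elementary factor $x_{j_l}(t_l)$ of $x_{\mathbb{I}}(t)$ past the subsequent positive letters and through $\overline{w_0}(\cdot)\overline{w_0}$ in $\hw_2(u,h_1,h_2)=h_1h_2\hw(\overline{w_0}u\overline{w_0})$. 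The inequalities arise from $\overline{\Delta}_2^t\geqslant 0$: by the computation in the proof of Proposition \ref{posofg2},
\[
\overline{\Delta}_2(u,h_1,h_2)=\chi^{\st}(u)+\Phi_{BK}(h_2u^Th_1^{w_0}),
\]
and \eqref{BKp} expresses the second summand as a sum, over $i\in I$, of ratios of generalized minors. Since each generalized minor is $H$-bihomogeneous, one peels off $h_1,h_2$ and each of the three resulting tropical pieces must be nonnegative. The $\chi^{\st}$ piece produces trails from $v_K\omega_i^\vee$ to $s_i\omega_i^\vee$ with right-hand side $0$; the two BK pieces produce the other two families, and the affine constants $\langle\omega_i^\vee,\lambda+s_i\nu-\mu\rangle$ and $\langle\omega_i^\vee,s_i\lambda+\nu-\mu\rangle$ emerge from the characters of $h_2$ and $h_1^{w_0}$ evaluated against the weight pairs $(w_0\omega_i,s_i\omega_i)$ and $(w_0s_i\omega_i,\omega_i)$ respectively, once the linear equality coming from $\hw_2$ is substituted in.

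The main obstacle is the decorated-trail formula itself: for any dominant $\mu$ and any weights $\gamma,\delta\in P(V_\mu)$ the tropicalization of the pullback of the matrix coefficient $\langle v_\gamma,\,(\cdot)\,v_\delta\rangle$ along $x_{\mathbb{I}}$ should equal $\min_\pi\sum_l a_l(\pi)\,t_l$, taken over $(\mathbf{i},K,L)$-trails $\pi$ from $\gamma$ to $\delta$; when the target vector is non-extremal, such as the lift $F_{\lambda_i^\vee}$ supplied by Proposition \ref{lift}, one restricts to trails for the chosen lift vector, which is precisely why the last family of inequalities uses $(\mathbf{i},K,L)$-trails for $f_{\lambda_i^\vee}$. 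The asymmetric coefficient $a_l(\pi)=-\tfrac12(\gamma_{l-1}+\gamma_l)\alpha_{i_l}^\vee=c_l-\langle\gamma_l,\alpha_{i_l}^\vee\rangle$ on negative letters is forced by the factorisation $x_{-i}(t)=y_i(t)\,t^{-\alpha_i^\vee}$: the torus factor contributes the shift $-\langle\gamma_l,\alpha_{i_l}^\vee\rangle$ while the unipotent factor $y_i(t)$ contributes the usual $c_l$ from the BZ01 formula. I would prove this formula by invoking the positive equivalence of $\xi_{\mathbb{I}}$ to its separable form established in the proof of Proposition \ref{positiveonU}, which reduces the assertion to the case $K=L=[1,|K|]$ where $\xi_{\mathbb{I}}$ factors as $\iota\circ\eta^{v_K}$ followed by multiplication in $L^{e,v_K^{-1}w_K}$; on each piece the original BZ01 trail formula applies directly, and the twist $\eta^{v_K}$ interchanges the $e$- and $f$-actions on matrix coefficients exactly as encoded in the decorated trail rules. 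Assembling the three tropicalized inequality families then reproduces the three bullet points of the theorem.
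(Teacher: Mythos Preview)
Your overall architecture coincides with the paper's: invoke Theorem \ref{multi-variety}, use the chart $\xi_{\mathbb{I}}$ of Proposition \ref{positiveonU}, rewrite $\overline{\Delta}_2(u,h_1,h_2)=\chi^{\st}(u)+\Phi_{BK}(h_2u^Th_1^{w_0})$ via \eqref{BKp} in terms of generalized minors, peel off the $h_i$-characters, and use the minimal lift $F_{\lambda_i}$ of Proposition \ref{lift} for the non-$U$-invariant summand. The paper also computes the $\hw_2$ constraint through $\Delta_{v_K\omega_i,w_0\omega_i}(x_{\mathbb{I}})$ and the identity $((x_{\mathbb{I}})^\iota)^T=x_{-\mathbb{I}}(t')\prod t_l^{r_l\alpha_{i_l}^\vee}$, which is exactly the conjugation bookkeeping you allude to.

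The divergence, and the gap, is in how you establish the decorated-trail formula. The paper does \emph{not} reduce to the separable case. It proves the formula directly for an arbitrary double word $\mathbb{I}$: first (Lemma \ref{trailtocomput}) one expands a matrix coefficient at $z_{\mathbb{I}}(t)=z_{j_1}(t_1)\cdots z_{j_n}(t_n)$ using the $U(\mathfrak{g})$-action, getting $\sum_\pi N_\pi\,t^{c(\pi)}$ over trails; then (Lemma \ref{xtoz}) one converts $x_{\mathbb{I}}$ to $z_{\mathbb{I}}$ by writing $x_{-i}(t)=y_i(t)t^{-\alpha_i^\vee}$ and commuting the torus factors to the right, which produces precisely the shifted exponents $a_l(\pi)$. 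You already spotted this mechanism in your aside about the shift, and that \emph{is} the whole proof of the formula.

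Your proposed route---reduce via positive equivalence to the separable chart and then apply the BZ01 formula on each piece---does not work as stated. Positive equivalence only guarantees a piecewise-linear bijection of tropicalizations; the transition maps in \cite[Proposition 7.2]{BZ01} used to pass to the separable form are in general non-monomial, so a linear inequality $\sum a_l(\pi)t_l\geqslant 0$ in the separable coordinates does not become $\sum a_l(\pi')t_l\geqslant 0$ in the original $\mathbb{I}$-coordinates. Moreover, even the separable double word still contains negative letters, so the single-word BZ01 trail formula does not apply ``directly'' to either piece; you would still need the $x_{-i}=y_i\,t^{-\alpha_i^\vee}$ conversion, i.e.\ exactly the direct argument above. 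Drop the reduction and keep the direct computation you already sketched; that aligns with the paper and closes the gap. One further point the paper makes explicit and you should too: the positivity of the coefficients $N_\pi$ (needed so that tropicalization equals $\min_\pi\sum a_l(\pi)t_l$) is supplied by \cite[Theorem 5.8]{BZ01} together with the positivity in Proposition \ref{positiveonU}.
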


\begin{Rmk}
  Note for $K=L=\emptyset$, it recovers \cite[Theorem 2.3]{BZ01}.
\end{Rmk}

\section{Geometric Lift of Reduction Multiplicities}\label{reduction multi}

Here we will consider reduction multiplicities following \cite[Section 2.4]{BZ01}. For a subset $J\subset I=\{1,\ldots,r\}$, denote by $\mathfrak{g}_J$ the corresponding {\em Levi subalgebra} generated by the Cartan subalgebra $\mathfrak{h}$ and $\{e_j,f_j\mid j\in J\}$. A weight $\beta\in P(\mathfrak{g})$ is {\em dominant for $\mathfrak{g}_J$} if $\langle \beta, \alpha_j^\vee\rangle\geqslant 0$ for all $j\in J$. Denote by $P_J(\mathfrak{g})$ the set of dominant weights for $\mathfrak{g}_J$. Denote by $V_\beta^J$ the irreducible $\mathfrak{g}_J$- module with highest weight $\beta$. What we want to compute now is the multiplicity of $V_\beta^J$ in the irreducible $\mathfrak{g}$-module $V_\lambda$. Denote by $w_0^J$ the longest element of the parabolic subgroup in $W$ generated by $s_j$ for any $j\in J$. Denote the Levi subgroup by $L\subset G$ and $w_P=(w_0^J)^{-1}w_0$.

\begin{Thm}\cite[Theorem 2.8]{BZ01}\label{reduc}
  For any reduced word $\mathbf{i} = i_1, \dots, i_n$ of $w_P=(w_0^J)^{-1}w_0$, the multiplicity of $V_\beta^J$ in the irreducible $\mathfrak{g}$-module $V_\lambda$ is equal to the number of $n$-tuples $(t_1, \dots, t_n)\in \mathbb{Z}^m$ satisfying the following conditions:
  \begin{itemize}
    \item $\sum_k d_k(\pi)t_k\geqslant 0$ for any $\mathbf{i}$-trail $\pi$ from $w_0^J\omega_i^\vee$ to $w_0s_i\omega_i^\vee$ in $V_{\omega_i^\vee}$;

    \item $\sum t_k\alpha_{i_k}=\lambda-\beta$;

    \item $t_k+\sum_{l>k} a_{i_k,i_l}t_l \leqslant \lambda(\alpha_{i_k}^\vee)$.
  \end{itemize}
\end{Thm}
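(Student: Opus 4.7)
The plan is to apply the geometric--multiplicity machinery of Sections \ref{Geometric Multiplicities}--\ref{tensor mul}: construct a positive geometric multiplicity $X_{G,L}\in\mul_L^+$ whose image under the functor $\mathcal V$ decomposes as
\[
  \mathcal V(X_{G,L})\;\cong\;\bigoplus_{\lambda\in X^*_+(H)}V_\lambda\big|_{L^\vee},
\]
in such a way that the $(\beta,\lambda)$-tropical fiber $\bigl(X_{G,L}\bigr)^t_{\beta,\lambda}$ has cardinality equal to $\dim\Hom_{L^\vee}\bigl(V^J_\beta,\,V_\lambda\big|_{L^\vee}\bigr)$. A Levi analogue of Theorem \ref{functor-compo} then reduces the reduction multiplicity to counting lattice points in a tropical fiber.

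The natural candidate for $X_{G,L}$ is (an open piece of) the reduced double Bruhat cell $L^{e,w_P}$ of $G$, equipped with: (i)~the Berenstein--Kazhdan potential $\Phi_{BK}$ restricted from $G^{w_0}$; (ii)~a highest-weight map $\hw_{X_{G,L}}\colon X_{G,L}\to H$ recording the $L$-dominant weight $\beta$; and (iii)~a torus fibration $\pi_{X_{G,L}}\colon X_{G,L}\to H$ recording the ambient $G$-weight $\lambda$. The motivation is that the decomposition of $G^\vee/U^\vee$ as an $L^\vee\times H^\vee$-module produces exactly the bigrading by $(\beta,\lambda)$, and the choice $w_P=(w_0^J)^{-1}w_0$ is forced by the requirement that $L^{e,w_P}$ parametrize (up to the relevant unipotent action) the cosets of the Levi parabolic.

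Once $X_{G,L}$ is in place, any $\mathbf{i}\in R(w_P)$ yields via Proposition \ref{positiveonU} (with trivial decoration) an open toric embedding $\xi_{\mathbf{i}}\colon \mathbb{T}^n\hookrightarrow L^{e,w_P}$, and the three families of inequalities in the theorem appear as tropicalizations of three geometric constraints:
\begin{itemize}
  \item The positivity $(\Phi_{BK}\circ\xi_{\mathbf{i}})^t\geqslant 0$ expands via \eqref{BKp} into a collection of inequalities indexed by $\mathbf{i}$-trails; the relevant endpoints are $w_0^J\omega_i^\vee$ (replacing $w_0\omega_i^\vee$, because the $L^\vee$-lowest weight of $V_{\omega_i^\vee}$ viewed as an $L^\vee$-representation is $w_0^J\omega_i^\vee$) and $w_0s_i\omega_i^\vee$, producing $\sum_k d_k(\pi)t_k\geqslant 0$ exactly as in \cite[Theorem 3.7]{BZ01}.
  \item The weight identity $\sum_k t_k\alpha_{i_k}=\lambda-\beta$ is the tropicalization of the combined map $(\hw_{X_{G,L}},\pi_{X_{G,L}})$: writing $x_{\mathbf{i}}(t_1,\dots,t_n)$ as an ordered product of Chevalley generators reads off its $H$-character as $\sum_k\alpha_{i_k}t_k$, and fixing the fiber over $(\beta,\lambda)$ imposes the stated equation.
  \item The upper bound $t_k+\sum_{l>k}a_{i_k,i_l}t_l\leqslant\lambda(\alpha^\vee_{i_k})$ is the defining string-cone inequality for the crystal $B(\lambda)$ in the $\mathbf{i}$-parametrization; geometrically it is the tropicalization of $\Delta_{s_{i_k}\omega_{i_k},\omega_{i_k}}\bigl(h_\lambda\overline{w_0}\,x_{\mathbf{i}}(t_1,\dots,t_n)\bigr)\geqslant 0$, where $h_\lambda\in H$ has weight $\lambda$.
\end{itemize}

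The main obstacle will be the second step: correctly pinning down $X_{G,L}$ (including its potential and structure maps) so that $\mathcal V(X_{G,L})$ really is the full restriction module with the intended $(\beta,\lambda)$-bigrading, and verifying the Levi analogue of Proposition \ref{posofg2}, namely that $\Phi_{BK}$ remains positive in the toric chart $\xi_{\mathbf{i}}$ associated to $\mathbf{i}\in R(w_P)$. Once this is established, the combinatorial bookkeeping of step~3 follows the template of \cite{BZ01}, with $w_0$ systematically replaced by $w_P$ and $w_0\omega_i^\vee$ replaced by $w_0^J\omega_i^\vee$ as the starting weight of each $\mathbf{i}$-trail.
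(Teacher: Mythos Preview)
The paper does not prove this theorem at all: it is stated as a citation of \cite[Theorem~2.8]{BZ01} and used as \emph{input}. The paper's own contribution in Section~\ref{reduction multi} goes in the opposite direction from your proposal: it writes down the object ${\bm L}=(L^{\vee;w_P,e}\times H^\vee,\Phi_L,\hw_L,\pi_L)$ explicitly and then, as the entire proof of the subsequent theorem, says only that one checks the tropical fiber $(\hw_L,\pi_L)^{-t}(\beta,\lambda)$ in the chart $\eta(t_1,\dots,t_n)=x^\vee_{-i_1}(t_1)\cdots x^\vee_{-i_n}(t_n)$ reproduces the three conditions already supplied by Theorem~\ref{reduc}. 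In other words, the paper \emph{assumes} the combinatorial answer and matches a geometric object to it; you are trying to \emph{derive} the combinatorial answer from the geometric object.

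Your route is in principle sound and is the more ambitious one, but the step you flag as ``the main obstacle'' is exactly where the argument is incomplete. To conclude that the tropical fiber counts the reduction multiplicity you need an isomorphism $\mathcal V(X_{G,L})\cong\bigoplus_\lambda V_\lambda|_{L^\vee}$, and nothing in the paper provides this for Levi subgroups: Theorem~\ref{functor} and Theorem~\ref{functor-compo} are built on the functor $\mathcal B$ of \cite{BKII} for the full group, and there is no Levi analogue of property~(3) ($\mathcal B_{\lambda^\vee}({\bm G})\cong V_{\lambda^\vee}$) established here. Without that, your three bullet points are a computation of the tropical fiber of \emph{some} positive variety with potential, not yet identified with a representation-theoretic multiplicity. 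The paper closes this gap by invoking \cite{BZ01}; if you want an internal proof you must either supply the Levi version of the crystal/normality results underlying $\mathcal B$, or reduce to the tensor-product case (e.g.\ via a parabolic factorization $w_0=w_0^J\cdot w_P$ and the corresponding splitting of $B^-$), neither of which you have sketched.
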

\begin{Thm}
	The geometric lift of the reduction multiplicity is the following object:
	\[
		{\bm L}:=(L^{\vee;w_P,e}\times H^\vee, \Phi_L, \hw_L, \pi_L)\in \mathbf{Mult}_L
	\]
	where each component is given by
	\begin{align*}
		\Phi_L(x,h)&:=\sum_{i\in I} \Delta_{w_P\omega_i^\vee,s_i\omega_i^\vee}(x)+\sum_{i\in I} h^{-\alpha_i^\vee}\frac{\Delta_{s_iw_i^\vee,\omega_i^\vee}(x)}{\Delta_{w_i^\vee,\omega_i^\vee}(x)};\\
		\hw_L&\colon L^{\vee;w_P,e}\times H^\vee\to H^\vee \ :\ (x,h)\to h;\\
		\pi_L&\colon L^{\vee;w_P,e}\times H^\vee\to H^\vee \ :\ (x,h)\to h\cdot [x]_0.
	\end{align*}
\end{Thm}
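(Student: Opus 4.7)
\emph{Proof proposal.} My plan is to verify in two stages: first that ${\bm L}$ is a bona fide positive geometric multiplicity in $\mathbf{Mult}_L$, and second that applying the functor $\mathcal V$ and extracting the $(\lambda^\vee,\beta^\vee)$-component recovers the reduction multiplicities, by matching tropical fibers to the polytope described in Theorem \ref{reduc}.

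For the first stage, the natural positive structure on $L^{\vee;w_P,e}\times H^\vee$ comes from a reduced word $\mathbf{i}\in R(w_P)$: the parametrization
$x_{\mathbf i}(t_1,\dots,t_n)=y_{i_1}(t_1)\cdots y_{i_n}(t_n)$,
composed with $[\overline{w_P}^{-1}(-)]_-[\overline{w_P}^{-1}(-)]_0$ (or equivalently the $\xi_{w_P,e}$ map from Theorem \ref{open} in the trivially-decorated case), gives a toric chart on $L^{\vee;w_P,e}$, and any chart on $H^\vee$ completes it. I would then check positivity of $\Phi_L$ term-by-term: the first sum $\sum_i \Delta_{w_P\omega_i^\vee,s_i\omega_i^\vee}(x)$ is exactly the relevant piece of the Berenstein--Kazhdan potential on $L^{\vee;w_P,e}$, whose positivity with respect to the chart above is essentially the content of \cite[Proposition 7.2]{BZ01}, and the correction term $\sum_i h^{-\alpha_i^\vee}\Delta_{s_i\omega_i^\vee,\omega_i^\vee}(x)/\Delta_{\omega_i^\vee,\omega_i^\vee}(x)$ is a product of a (positive) character of $H^\vee$ and a ratio of generalized minors that is known to be positive on the double Bruhat cell (this ratio appears explicitly in \eqref{BKp}). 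Positivity of $\hw_L$ is trivial, and positivity of $\pi_L$ follows because $[x]_0$ is a positive rational map on $L^{\vee;w_P,e}$ in this chart.

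For the second stage, I would tropicalize. Writing $(t_1,\dots,t_n;h)$ for the logarithmic coordinates (with $\lambda^\vee=\hw_L^t, \beta^\vee = \pi_L^t - \hw_L^t$ after a change of variables induced by $[x]_0$), the condition $\Phi_L^t\geqslant 0$ separates into two families of inequalities. The tropicalization of $\Delta_{w_P\omega_i^\vee,s_i\omega_i^\vee}(x_{\mathbf i}(t))$ is, by the $\mathbf i$-trails formula of \cite[Theorem 3.7]{BZ01}, the minimum over $\mathbf i$-trails $\pi$ from $w_P\omega_i^\vee=w_0^J\omega_i^\vee$ to $s_i\omega_i^\vee$—but here, as in Theorem \ref{reduc}, the endpoint is the one for the extremal weight $w_0s_i\omega_i^\vee$ for $G^\vee$ (which equals $s_i\omega_i^\vee$ along $w_P$), so this exactly reproduces the first family of inequalities $\sum_k d_k(\pi)t_k\geqslant 0$. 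The second family, coming from $h^{-\alpha_i^\vee}\Delta_{s_i\omega_i^\vee,\omega_i^\vee}/\Delta_{\omega_i^\vee,\omega_i^\vee}$, tropicalizes to an inequality of the form $t_k+\sum_{l>k}a_{i_k,i_l}t_l\leqslant \lambda^\vee(\alpha_{i_k}^\vee)$ (once one identifies $\lambda^\vee$ with $h^t$ via $\hw_L$), matching the third family in Theorem \ref{reduc}. The weight condition $\sum t_k\alpha_{i_k}=\lambda^\vee-\beta^\vee$ drops out of the relation $\pi_L(x,h)=h\cdot[x]_0$ because $[x_{\mathbf i}(t)]_0=\prod_k t_k^{-\alpha_{i_k}^\vee}$ (a standard computation).

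The main obstacle is the second family of inequalities: one must verify carefully that the particular combination $h^{-\alpha_i^\vee}\Delta_{s_i\omega_i^\vee,\omega_i^\vee}/\Delta_{\omega_i^\vee,\omega_i^\vee}$ tropicalizes to exactly the ``triangular'' constraint $t_k+\sum_{l>k}a_{i_k,i_l}t_l\leqslant\lambda^\vee(\alpha_{i_k}^\vee)$ for all $k$ with $i_k=i$. This is where the specific shape of the correction term (rather than some more naive choice such as $h^{-\omega_i^\vee}\cdot\Delta_{s_i\omega_i^\vee,\omega_i^\vee}$) is crucial, and I expect the cleanest way through is to compute $\Delta_{s_i\omega_i^\vee,\omega_i^\vee}(x_{\mathbf i}(t))$ via the $\mathbf i$-trails of \cite{BZ01} and then combine with the monomial $h^{-\alpha_i^\vee}\cdot[x_{\mathbf i}(t)]_0^{\alpha_i^\vee}$ absorbed into the torus variable. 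Once those matching computations are done, Theorem \ref{functor-compo} applied to ${\bm L}$ (viewed inside $\mathbf{Mult}_L$) identifies $\mathcal V_{\beta^\vee}({\bm L})$ with $\bigoplus_{\lambda^\vee}V^{L^\vee}_{\beta^\vee}\cdot [V_{\lambda^\vee}\!\downarrow_{L^\vee}^{G^\vee}:V^{L^\vee}_{\beta^\vee}]$, completing the identification of ${\bm L}$ as the claimed geometric lift.
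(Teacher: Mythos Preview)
Your proposal follows essentially the same route as the paper: fix a toric chart on $L^{\vee;w_P,e}$ coming from a reduced word $\mathbf{i}$ for $w_P$, tropicalize $(\Phi_L,\hw_L,\pi_L)$ in that chart, and match the resulting inequalities and linear constraint against the three bullet points of Theorem \ref{reduc}. The paper's proof is in fact a one-line sketch---it simply writes down the chart $\eta(t_1,\dots,t_n)=x_{-i_1}^\vee(t_1)\cdots x_{-i_n}^\vee(t_n)$ and asserts that the tropical fiber $(\hw_L,\pi_L)^{-t}(\beta,\lambda)$ recovers the polytope of Theorem \ref{reduc}---so your write-up is considerably more detailed than what appears in the paper.

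Two minor deviations worth flagging. First, the paper uses the $x_{-i}^\vee$ factorization directly as the chart on $L^{\vee;w_P,e}$, whereas you route through $y_{i_k}$'s composed with $\xi_{w_P,e}$; these are positively equivalent, so the tropical output is the same, but the paper's choice makes the minor computations slightly cleaner (in particular $[x]_0$ is immediately monomial). Second, your closing appeal to Theorem \ref{functor-compo} is unnecessary: once the tropical fiber over $(\beta,\lambda)$ has been identified with the polytope of Theorem \ref{reduc}, the statement that ${\bm L}$ is the geometric lift of the reduction multiplicity is established by definition of the functor $\mathcal V$, without any need to invoke the $\star$-product compatibility.
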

\begin{proof}
	One only need to verify that for the following positive chart
	\[
		\eta\colon \mathbb{T}^n\to L^{\vee; w_P,e} \ :\ (t_1,\ldots,t_n)\mapsto x_{-i_1}^\vee(t_1)\cdots x_{-i_n}^\vee(t_n),
	\]
	the tropical fiber $(\hw_L,\pi_L)^{-t}(\beta,\lambda)$ gives the combinatoric expression in Theorem \ref{reduc}.
\end{proof}

As in the story of tensor multiplicities, different toric charts of $L^{\vee;w_P,e}\times H^\vee$ will bring us many more combinatoric expression for reduction multiplicities. Here we will only show the following example and leave the general case in the future work.

\begin{Ex}
  Let $G$ be $\SL_{n+1}$ and $J=\{1,\ldots,n\}\slash \{n-1\}$. Then the Levi subgroup $L$ is the subgroup of $\GL_{n-1}\times \GL_2$ consisting of elements $(x,y)$ satisfying $\det(x)\det(y)=1$, and $w_P=s_{n-1}\cdots s_1\cdot s_n\cdots s_2$. In what follows, we will give some new combinatoric expressions for the reduction multiplicities using the separable decorated word for $w_P$. Recall that by Theorem \ref{open}, we have the following open embedding
  \[
  	L^{w^{-1},v^{-1}}\hookrightarrow L^{w^{-1}v,e}\ :\ x\to [x\overline{v}]_-[x\overline{v}]_0
  \]
  where $v,w\in W$ satisfying $\ell(v)+\ell(w)=\ell(v^{-1}w)$. For $k\in[0,n-1]$, one can write $x\in L^{w_P,e}$ as
  \[
    x=[x_{-n+1}(a_{n-1})\cdots x_{-k}(a_k)\cdot x_{2}(b_2)\cdots x_{n}(b_n)\cdot x_{1}(a_1)\cdots x_{k-1}(a_{k-1}) \overline{s_{k-1}}\cdots \overline{s_1}\cdot \overline{s_n}\cdots \overline{s_1}]_{\leqslant 0}.
  \]
  Note that $x_i(t)\overline{s_i}=x_{-i}(t^{-1})x_i(-t^{-1})$. By \cite[Proposition 7.2]{BZ01}, one can rewrite $x$ as
  \[
    x=x_{-n+1}(p_{n-1})\cdots x_{-1}(p_1) \cdot x_{-n}(q_n)\cdots x_{-2}(q_2),
  \]
  where $p_i$'s and $q_j$'s are given by rational functions of $a_i$'s and $b_j$'s. To illustrate,  set $n=4$, and $k=3$,
  \begin{align*}
  	p_3&=a_3;\quad &p_2&=(a_2+b_2)^{-1}; \quad &p_1&=a_1^{-1}a_2^{-1};\\
  	q_4&=b_4^{-1};\quad &q_3&=a_2^{-1}b_3^{-1}b_4^{-1};\quad &q_2&=a_1^{-1}a_2^{-1}b_2^{-1}b_3^{-1}b_4^{-1}(a_2+b_2).
  \end{align*}
  Note that the reduction multiplicity in Theorem \ref{reduc} is the tropical fiber of ${\bm L}$ in coordinate $p_i$'s and $q_j$'s. The piecewise linear map between $(p_1,\ldots,p_{n-1};q_2,\ldots,q_n)^t$ and $(a_1,\ldots,a_{n-1};b_2,\ldots,b_n)^t$ will bring us new combinatoric expressions other than the one in Theorem $\ref{reduc}$. 

  For the word $w_P=(s_{n-1}\cdots s_1)\cdot (s_{n}\cdots s_2)$, using the tricks in Lemma \ref{App:Lemma} , one can get at least $n-2$ non-equivalent decorations, which will give us at least $n-2$ new combinatoric expressions (up to linear transformation).
\end{Ex}

\section{Proofs of Results in Section \ref{Geometric Multiplicities}}\label{section proofs1}

Note that we have the following two procedures: On the one hand, to a trivializable $(U\times U,\chi^{\st})$-bicrystals $(X,{\bm p},\Phi_X)$ with an $U\times U$-invariant function $\pi_X\colon X\to S$,  one has a geometric multiplicities as $(U\backslash X/U,\overline{\Delta}_X,\overline{\hw}_X, \overline{\pi}_X)$, where $\hw_X:=\hw\circ {\bm p}$.

On the other hand, using the fact that $G$ is a $(U\times U,\chi^{\st})$-bicrystal, for any ${\bm M}\in \mul_G$, one can construct a $(U\times U, \chi^{\st})$-bicrystal as follows:
\[
  X_M:=M\times_H G,\quad \alpha_M\colon U\times X_M \times U \to X_M\ : \ u\cdot (m,g)\cdot u' \mapsto (m,ugu'), 
\]
where the fiber product is over $\hw_M\colon M\to H$ and $\hw\colon G\to H$, and moreover
\[
  {\bm p}_M\colon X_M\to G\ : \ (m,g)\mapsto g, \quad \Phi_{X_M}(m,g):=\Phi_M(m)+\Phi_{BK}(g).
\]

\begin{proof}[Proof of Theorem \ref{Associator}]
  We will show first that the equivalence of category $\mathbf{TriUB}_G$ and $\mul_G$. From the discussion above,  we have the following functors:
  \begin{align*}
    \mathcal{F}\colon \mathbf{TriUB}_G&\to \mul_G\ :\ (X\cong U\backslash X/U\times_H G, \Phi_X) \to (U\backslash X/U, \overline{\Delta}_X);\\
    \mathcal{G}\colon \mul_G&\to \mathbf{TriUB}_G\ :\ (M, \Phi_M) \to (M\times_H G, \Phi_M+\Phi_{BK}).
  \end{align*}
  One can check that $\mathcal{F}\mathcal{G}$ (resp. $\mathcal{G}\mathcal{F}$) is natural isomorphic to the identity functor for $\mul_G$ (resp. $\mathbf{TriUB}_G$). Moreover, by definition we have $\mathcal{G}(M\star N)\cong \mathcal{G}(M)*\mathcal{G}(N)$ and the following commuting diagram:
  \begin{equation}
  \begin{tikzcd}
    \mathcal{G}((M_1\star M_2)\star M_3) \arrow[r, " \mathcal{G}(\widetilde{\Psi}_{M_1,M_2,M_3})"] \arrow[d, "\sim"'] &[1.3em] \mathcal{G}(M_1\star (M_2\star M_3)) \arrow[d, "\sim"] \\[1em]
    (\mathcal{G}(M_1)*\mathcal{G}(M_2))*\mathcal{G}(M_3) \arrow[r, "\sim"] &  \mathcal{G}(M_1)*(\mathcal{G}(M_2)*\mathcal{G}(M_3))
  \end{tikzcd}\ ,
  \end{equation}
  where all the $\sim$'s are natural isomorphisms. Since $\mathbf{TriUB}_G$ is a monoidal category with trivial associator, we know $\mul_G$ is a monoidal category with associator given by the formula \eqref{assoformula}. 
\end{proof}

Following the spirit of \cite{BKII}, we say $(U\times U, \chi^{\st})$-bicrystal $({\bm X},{\bm p}, \Phi)$ is {\em positive trivializable} if there exist positive structures for $X^-:={\bm p}^{-1}(B^-)$ and $U\backslash X/U$ respectively, such that the map $\varphi$ in \eqref{trivializable}  and its inverse $\varphi^{-1}$ restrict to positive birational isomorphisms of unipotent bicrystals:
\begin{equation}
  \varphi_-\colon X^-\xleftrightarrow{\sim} U\backslash X/U\times_H B^- \ \mathpunct{:}(\varphi^{-1})_-.
\end{equation}

\begin{proof}[Proof of Theorem \ref{positiveasso}]
  Because of Theorem \ref{Associator} and Remark \ref{rmkofposass}, what we need to show is that the following map $F$ and its inverse $F^{-1}$ are isomorphisms of positive varieties with potential:
  \begin{equation}\label{mainmap}
    F \colon B^-\times B^- \to (U\times H^2)\times_H B^- \ :\ (g_1,g_2)\mapsto \left( \pi(g_1,g_2), \hw(g_1), \hw(g_2), g_1g_2\right).
  \end{equation}
  By \cite[Claim 3.41]{BKII}, we know that $F$ is positive. What left is to show $F^{-1}$ is a positive isomorphism.

  In what follows, we will first give explicit formulas \eqref{v2}-\eqref{v1u2} for the inverse of $F$. Then show that \eqref{v2}-\eqref{v1u2} are positive with respect to the positive structures given by Lemma \ref{positiveonU}.

  Note that $L^{e,w_0}$ is open dense in $U$ and $G^{w_0,e}$ is open dense in $B^-$. Let $(u,h_1,h_2,y)\in (L^{e,w_0}\times H^2)\times_H G^{w_0,e}$. We will find the expression of $g_i$ in terms of $u,y$ and $h_i$. Since
  \[
    \overline{\hw}_{M^{(2)}}(u,h_1,h_2)=\hw(y),
  \]
  there exists a unique pair $(u_1, v_2)\in U\times U$ such that $y=u_1\overline{w_0}h_1^{w_0}uh_2\overline{w_0}v_2$, where $h^{w_0}$ is short for $\overline{w_0}^{-1}h\overline{w_0}$. Denote by $x=h_1^{w_0}uh_2$ for simplicity. Then by taking $[\cdot]_+$ part of $\overline{w_0}^{-1}y$, we have
  \[
    [\overline{w_0}^{-1}y]_+=[\overline{w_0}^{-1}u_1\overline{w_0}x\overline{w_0}v_2]_+=[x\overline{w_0}]_+v_2
  \]
  since $\overline{w_0}^{-1}u_1\overline{w_0}\in B_-$ and $v_2\in U$. Therefore we have
  \begin{equation}\label{v2}
    v_2=[\overline{w_0}^{-1}y]_+[x\overline{w_0}]_+^{-1}.
  \end{equation}
  Now let's applying $\iota$ to $y$, we have $y^{\iota}=v_2^\iota \overline{w_0}x^{\iota}\overline{w_0}u_1^\iota$. Then using \eqref{v2}, we get
  \begin{equation}\label{u1}
    u_1^{\iota}=[\overline{w_0}^{-1}y^\iota]_+[x^\iota\overline{w_0}]_+^{-1}.
  \end{equation}

  In order to write $g_i$ as $u_ih_i\overline{w_0}v_i$, we just need to define
  \begin{gather}
  \begin{aligned}\label{v1u2}
    v_1=[u_1h_1\overline{w_0}]_+^{-1}, &\quad \text{~and~} \quad u_2^\iota=[v_2^\iota \overline{w_0}h_2^{-1}]_+^{-1}.
  \end{aligned}
  \end{gather}
  Now one can easily check that \eqref{v1u2} does give the inverse of $F$.

	What's next is to show the positivity. Note that the restriction of $\iota$ on $G^{w_0,e}$ is positive with respect to the positive structure. Thus the positivity of \eqref{v2} implies the positivity of \eqref{u1}. What we will show next is that the two factors of \eqref{v2} 
	\[
		\eta\colon G^{w_0,e}\to U \ :\ g\mapsto [\overline{w_0}^{-1}g]_+\quad \text{~and~}\quad \zeta\colon G^{e,w_0} \to U\ :\ g\mapsto [g\overline{w_0}]_+^{-1}
	\]
	are positive. Then the positivity of \eqref{v1u2} follows from the positivity of the map $\zeta$. 

	First, by \cite[Claim 3.25,(3.6)]{BKII} and the fact that $\iota$ is positive, one knows $\eta$ is positive.

	Second, write $b:=[g\overline{w_0}]_-[g\overline{w_0}]_0$, then one has $\zeta(g)=\overline{w_0}^{-1}g^{-1}b$. Applying $\left(\cdot \right)^\iota$ to both side of equation $\zeta(g)=\overline{w_0}^{-1}g^{-1}b$, we get
	\begin{equation}\label{zeta}
		\zeta(g)^\iota=b^\iota \overline{w_0}^{-1} \sigma(g),
	\end{equation}
	where $\sigma(g)=\overline{w_0}g^{-\iota}\overline{w_0}^{-1}$. Thus we get $\zeta(g)^\iota=[\overline{w_0}^{-1}\sigma(g)^T]_+$. Since $\sigma(g)^T$ is positive by \cite[Eq (4.6)]{BZ01}, the map $\zeta$ is positive.
\end{proof}

\section{Proofs of Results in Section \ref{tensor mul}}

In \cite{BKII}, the authors constructed a functor
\[
  \mathcal{B}\colon \mathbf{UB}_G^+\to \mathbf{Mod}_{G^\vee}
\]
from the category $\mathbf{UB}_G^+$ of positive  unipotent bicrystals \cite[Definition 3.29]{BKII} to the category $\mathbf{Mod}_{G^\vee}$ of $G^\vee$ module by passing through the geometric crystals and Kashiwara crystals \cite[Claim 6.9, 6.10, 6.12, Theorem 6.15]{BKII}. Here we briefly recall some properties of the functor $\mathcal{B}$.

Let $({\bm X},{\bm p},\Phi)$ be a positive unipotent bicrystals. Denote by $\hw_X:=\hw\circ {\bm p}\colon X\to H$ the highest weight map of $X$. In what follows, we write ${\bm X}$ for $({\bm X},{\bm p},\Phi)$ for simplicity. Then we have 
\begin{itemize}
  \item[$(1)$] $\mathcal{B}({\bm X}*{\bm X}')\cong \mathcal{B}({\bm X})\otimes \mathcal{B}({\bm X}')$ and $\mathcal{B}$ is monoidal.
\end{itemize}
Denote by $\pi_X\colon X\to S$ an $U\times U$-invariant positive map to a torus $S$. Then the $G^\vee$-module $\mathcal{B}({\bm X})$ can be parametrized over $\xi\in X_*(S)$ as direct sums of $G^\vee$-submodules, {\em i.e.},
\begin{itemize}
  \item[$(2)$] $\mathcal{B}({\bm X})=\bigoplus_{\xi\in X_*(S)} \mathcal{B}_\xi({\bm X}).$
\end{itemize}
Moreover, the typical components respect the convolution product $*$:
\begin{itemize}
  \item[$(1')$]  $\mathcal{B}_{\xi_1,\xi_2}({\bm X}_1*{\bm X}_2)\cong \mathcal{B}_{\xi_1}({\bm X}_1)\otimes\mathcal{B}_{\xi_2}({\bm X}_2)$.
\end{itemize}
For the positive unipotent bicrystal ${\bm G}:=(G,\id_G, \Phi_{BK})$ and the $U\times U$-invariant map $\hw\colon G\to H$:
\begin{itemize}
  \item[$(3)$] For $\lambda^\vee\in X_*^+(H) $, one has $\mathcal{B}_{\lambda^\vee}({\bm G})\cong V_{\lambda^\vee}$, where $V_{\lambda^\vee}$ is the irreducible $G^\vee$ module with highest weight $\lambda^\vee$; for $\lambda^\vee\notin X_*^+(H) $, one has $\mathcal{B}_{\lambda^\vee}({\bm G})=\emptyset$.
\end{itemize}
Let $(M,\Phi_M)$ be positive variety with potential (positive) fibered over torus $H\times S$. For unipotent bicrystal $({\bm X}=(X,\alpha), {\bm p}, \Phi)$, denote by ${\bm X}_M:=(M\times_H X, \alpha')$, where $\alpha'(u,(m,x),u)=(m, \alpha(u,x,u'))$. Thus $({\bm X}_M, {\bm p}, \Phi_M+\Phi)$ is a positive unipotent bicrystal (positive) fibered over $H\times S$, then we have for $(\lambda^\vee, \xi)\in X_*(H)\times X_*(S)$
\begin{itemize}
  \item[(4)]   $\mathcal{B}_{\lambda^\vee, \xi}({\bm X}_M)\cong \mathbb{C}[M^t_{\lambda^\vee, \xi}]\otimes \mathcal{B}_{\lambda^\vee}({\bm X})$, where $M^t_{\lambda^\vee, \xi}$ is the tropical fiber of $M$ over $(\lambda^\vee, \xi)$.
\end{itemize}

\begin{proof}[Proof of Theorem \ref{functor}]
  Note that we have the following functor
  \begin{align*}
    \mathcal{G}\colon \mul_G^+&\to \mathbf{TriUB}_G^+\ :\ (M, \Phi_M) \to (M\times_H G, \Phi_M+\Phi_{BK}).
  \end{align*}
  Then the assignment ${\bm M}\mapsto \mathcal{V}({\bm M})$ is just the combination of $\mathcal{B}\circ\mathcal{F}$ by the properties $(3,4)$ we list above. Thus we conclude that $\mathcal{V}$ is a monoidal functor.
\end{proof}

\begin{proof}[Proof of Theorem \ref{functor-compo}]
  First note that $\mathcal{V}_\xi({\bm M})\cong \mathcal{B}_{\xi}(\mathcal{F}({\bm M}))$. Then by the property $(1, 1', 4)$ listed above, we have:
  \begin{align*}
    \mathcal{V}_{\xi_1,\xi_2,\lambda^\vee, \nu^\vee}({\bm M}_1\star {\bm M}_2) &\cong \mathcal{B}_{\lambda^\vee, \xi_1}\left(\mathcal{F}({\bm M}_1)\right)\otimes \mathcal{B}_{\nu^\vee,\xi_2}\left(\mathcal{F}({\bm M}_2)\right)\\
    &\cong I_{\lambda^\vee}\left(\mathcal{V}_{\xi_1}({\bm M}_1)\right)\otimes I_{\nu^\vee}\left(\mathcal{V}_{\xi_1}({\bm M}_2)\right),
  \end{align*}
  which is what want to show.
\end{proof}

\section{Proofs of Results in Section \ref{combforten}}

In this section, we will show that how to use certain positive structure on geometric multiplicity $H\star H\cong M^{(2)}$ to give a combinatoric expression for the tensor multiplicities. In this section, to a decorated word $(\mathbf{i},K,L)$, let $ \mathbb{I}(\mathbf{i},K,L)=(j_1,\ldots,j_n)$ be the associated double word as before.

First, let us explain how to express functions $f\in \mathbb{Q}[G]$ by using the factorization parameters. Denote by
\[
  z_{i}:=
  \begin{dcases}
    x_{i}, & \text{~if~} i>0\\
    y_{i}, & \text{~if~} i<0
  \end{dcases}.
\]
The valuation of $f\in \mathbb{Q}[G]$ at $z_{\mathbb{I}}(t_1,\ldots,t_n)=z_{j_1}(t_1)\cdots z_{j_n}(t_n)$ defines a homomorphism of algebras:
\[
  \mathbb{Q}[G]\to \text{Sym}[t_1^\pm,\ldots, t_n^\pm]\ :\ f\mapsto f(z_{\mathbb{I}}(t_1,\ldots,t_n)).
\]
Consider $\mathbb{Q}[G]$ as $U(\mathfrak{g})\otimes U(\mathfrak{g})$ module as before. The coefficient of each monomial $t_1^{c_1}\cdots t_l^{c_n}$ in $f(z_{\mathbb{I}}(t_1,\ldots,t_n))$ is given by
\[
  \left( \left(\mathbbm{1}, e_{j_1}^{(c_1)}\cdots e_{j_n}^{(c_n)}\right)f\right)(e),
\]
where $e$ is the identity element in $G$ and $e_{i}^{(n)}$ is the (decorated) divided power
\[
  e_{i}^{(n)}:=
  \begin{dcases}
    e_{i}^{n}/n!, & \text{~if~} i>0\\
    f_{i}^{n}/n!, & \text{~if~} i<0
  \end{dcases}.
\]
This is simply by induction on the length of $\mathbb{I}$ and the following observation:
\[
  f\left(g\exp(te_i)\right)=\sum t^n\left(\left(\mathbbm{1},e_i^{(n)}\right)f\right)(g).
\]
If $f$ bi-homogeneous of degree $(\gamma,\delta)$, the value $f(e)$ can be non-zero only if $\gamma=\delta$. In this case, the evaluation $f(z_{\mathbb{I}}(t_1,\ldots,t_n))$ contains only monomial $t_1^{c_1}\cdots t_n^{c_n}$ such that $\sum \sign(j_l)c_{l}\alpha_{j_l}=\gamma-\delta$.

\begin{Lem}\label{trailtocomput}
  For a decorated word $(\mathbf{i},K,L)$ with associated double word $\mathbb{I}=(j_1,\ldots,j_n)$ and a dominant integral weight $\lambda$, a bi-homogeneous function $f_{w\lambda,\delta}\in V_\lambda\otimes V'_\lambda\subset \mathbb{Q}[G]$ of degree $(w\lambda,\delta)$ for some $w\in W$ has evaluation
  \[
    f_{w\lambda,\delta}(z_{\mathbb{I}}(t_1,\cdots,t_n))=\sum_{\pi} N_\pi t_1^{c_1(\pi)}\cdots t_n^{c_n(\pi)},
  \]
  where $N_\pi\in \mathbb{Q}^*$ and the sum on RHS is over all $(\mathbf{i},K,L)$-trails for $f_{w\lambda,\delta}$ from $w\lambda$ to $\delta$ in $V_\lambda$.
\end{Lem}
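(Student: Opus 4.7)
The plan is to start from the explicit expansion formula derived just above the lemma: iterating the identity $f(g\exp(te_i))=\sum_c t^c((\mathbbm{1},e_i^{(c)})f)(g)$ (and its analogue with $f_i$ in place of $e_i$) along the factorization $z_{\mathbb{I}}(t_1,\ldots,t_n)=z_{j_1}(t_1)\cdots z_{j_n}(t_n)$ yields
\[
  f_{w\lambda,\delta}\bigl(z_{\mathbb{I}}(t_1,\ldots,t_n)\bigr)=\sum_{(c_1,\ldots,c_n)\in\mathbb{Z}_{\geqslant 0}^{n}}\bigl(\bigl(\mathbbm{1},e_{j_1}^{(c_1)}\cdots e_{j_n}^{(c_n)}\bigr)f_{w\lambda,\delta}\bigr)(e)\,t_1^{c_1}\cdots t_n^{c_n}.
\]
So the lemma reduces to identifying the tuples $(c_1,\ldots,c_n)$ for which the scalar coefficient is nonzero with the $(\mathbf{i},K,L)$-trails for $f_{w\lambda,\delta}$ from $w\lambda$ to $\delta$.

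Write $f_{w\lambda,\delta}=v\otimes v'$ with $v\in V_\lambda(w\lambda)$ and $v'\in V'_\lambda(\delta)$. Since $(\mathbbm{1},X)(v\otimes v')=v\otimes(X\cdot v')$, each operator $e_{j_l}^{(c_l)}$ acts solely on $v'$ and shifts its weight by $\sign(j_l)c_l\alpha_{|j_l|}$ (raising via $e_{j_l}^{c_l}/c_l!$ when $j_l>0$, lowering via $f_{|j_l|}^{c_l}/c_l!$ when $j_l<0$). Declaring $\gamma_n:=\delta$ and $\gamma_{l-1}:=\gamma_l+\sign(j_l)c_l\alpha_{|j_l|}$ reproduces exactly condition $(1)$ of the trail definition, and the vector $e_{j_1}^{(c_1)}\cdots e_{j_n}^{(c_n)}v'$ sits in the weight space $V'_\lambda(\gamma_0)$.

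Next, the evaluation $f(e)=\langle v,v'\rangle$, combined with the fact that the pairing $\langle\cdot,\cdot\rangle$ vanishes between weight spaces of distinct $H$-weight (a direct consequence of $h^T=h$ and irreducibility), forces the coefficient to be zero unless the resulting right weight equals the left weight, i.e. unless $\gamma_0=w\lambda$; telescoping condition $(1)$ then gives the weight-balance $w\lambda-\delta=\sum_l\sign(j_l)c_l\alpha_{|j_l|}$. Assuming this balance holds, the vector $e_{j_1}^{(c_1)}\cdots e_{j_n}^{(c_n)}v'$ lies in the one-dimensional extremal weight space $V'_\lambda(w\lambda)$, so its pairing with $v$ is a nonzero rational number if and only if the vector itself is nonzero. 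The latter is precisely trail condition $(2')$ for $f_{w\lambda,\delta}$. Hence the monomials with nonzero coefficient are in bijection with the $(\mathbf{i},K,L)$-trails $\pi$, each with coefficient $N_\pi=\langle v,e_{j_1}^{(c_1)}\cdots e_{j_n}^{(c_n)}v'\rangle\in\mathbb{Q}^*$.

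The only slightly delicate point is the bookkeeping of how the right action $(\mathbbm{1},\cdot)$ extends from Lie algebra elements to products in $U(\mathfrak{g})$, ensuring that the PBW-ordered operator $e_{j_1}^{(c_1)}\cdots e_{j_n}^{(c_n)}$ appears in the same left-to-right order as the factors $z_{j_1}(t_1)\cdots z_{j_n}(t_n)$; this is routine and is already embedded in the expansion formula quoted just above the lemma statement, so it poses no real obstacle.
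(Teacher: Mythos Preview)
Your proof is correct and follows essentially the same approach as the paper: both use the coefficient formula $\bigl((\mathbbm{1},e_{j_1}^{(c_1)}\cdots e_{j_n}^{(c_n)})f\bigr)(e)$ established just before the lemma, observe that the operators shift the right weight by $\sign(j_l)c_l\alpha_{|j_l|}$, and invoke the one-dimensionality of the extremal weight space $V_\lambda(w\lambda)$ to conclude that the coefficient is nonzero precisely when the vector $e_{j_1}^{(c_1)}\cdots e_{j_n}^{(c_n)}v'$ is nonzero, i.e.\ when $(2')$ holds. Your write-up is in fact somewhat more explicit than the paper's (for instance, you spell out why evaluation at $e$ forces the weight balance and why $f_{w\lambda,\delta}$ is a simple tensor), but the underlying argument is the same.
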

\begin{proof}
  Denote by $\gamma=w\lambda$. Note that for $v\in V_\lambda(\delta)$, 
  \[
    h(e_{j}^{(c)}v)=\left[h,e_{j}^{(c)}\right]v+e_{j}^{(c)}hv=\langle h, \delta+\sign(j)c\alpha_{j} \rangle e_{j}^{(c)}v.
  \]
  Thus for $c_l\geqslant 0$ satisfying $\sum \sign(i_l)c_{l}\alpha_{j_l}=\gamma-\delta$, the linear map
  \[
    \left(\mathbbm{1}, e_{j_1}^{(c_1)}\cdots e_{j_n}^{(c_n)}\right)\colon  V_\lambda(\gamma)\otimes V'_\lambda(\delta)\to V_\lambda(\gamma)\otimes V'_\lambda(\gamma)
  \]
  is well defined. Since $\gamma=w\lambda$ is extremal, we know the $V_\lambda(\gamma)\otimes V'_\lambda(\gamma)$ is one dimensional. Thus the homomorphism $\left(\mathbbm{1}, e_{j_1}^{(c_1)}\cdots e_{j_n}^{(c_n)}\right)$ is non-zero  if and only if 
  \[
    \left( \left(\mathbbm{1}, e_{j_1}^{(c_1)}\cdots e_{j_n}^{(c_n)}\right)f\right)(e),
  \]
  which is exactly the definition of $(\mathbf{i},K,L)$-trail $\pi$ for $f_{w\lambda,\delta}$ in $V_\lambda$. 
\end{proof}

\begin{Lem}\cite[Lemma 6.1]{BZ01}{\label{xtoz}}
  For any double word $(j_1,\dots,j_n)$, we have:
  \[
    x_{j_1}(t_1)\cdots x_{j_n}(t_n)=z_{j_1}(t_1')\cdots z_{j_n}(t_n')\cdot \prod_{j_l<0}t_l^{-\alpha_{j_l}^\vee}, \quad \text{~where~} t_l'=t_l\displaystyle\prod_{k<l, j_k<0}t_i^{\sign(-i_l)a_{j_k,j_l}}.
  \]
\end{Lem}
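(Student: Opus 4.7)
This is a direct computation exploiting the commutation relations between the Cartan subgroup and the one-parameter root subgroups. The plan is threefold: first reduce each $x_{j_l}$ to $z_{j_l}$ times a single Cartan factor, then record how those Cartan factors conjugate past the $z_{j_k}$'s to the right, and finally migrate all Cartan factors to the right end by induction on $n$.

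First, from the definitions in Section~3.1 one has $x_{-i}(t) = y_i(t)\,t^{-\alpha_i^\vee} = z_{-i}(t)\cdot t^{-\alpha_i^\vee}$ for $i>0$, while $x_i(t)=z_i(t)$ for $i>0$. Hence $x_{j_l}(t_l) = z_{j_l}(t_l)\cdot h_l$, where $h_l$ is the identity if $j_l>0$ and $h_l = t_l^{-\alpha_{|j_l|}^\vee}$ if $j_l<0$. Second, since $H$ acts on the root subgroup $U_{\pm\alpha_j}$ via the character $\pm\alpha_j$ and $\alpha_j(t^{\alpha_i^\vee}) = t^{a_{ij}}$ with $a_{ij}=\langle\alpha_j,\alpha_i^\vee\rangle$, I will invoke the uniform commutation rules
\[
  t^{\alpha_i^\vee}\cdot x_j(s) = x_j\bigl(t^{a_{ij}}s\bigr)\cdot t^{\alpha_i^\vee}, \qquad t^{\alpha_i^\vee}\cdot y_j(s) = y_j\bigl(t^{-a_{ij}}s\bigr)\cdot t^{\alpha_i^\vee}.
\]
The sign difference here is precisely the $\sign(-j)$ appearing in the statement: commuting past an $x_j$ (positive sign) flips to $-a_{ij}$, while commuting past a $y_j$ (negative sign) keeps $+a_{ij}$.

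Third, I induct on $n$. The base case $n=1$ is immediate. For the inductive step, assume the formula holds for the first $n-1$ factors, so
\[
  x_{j_1}(t_1)\cdots x_{j_{n-1}}(t_{n-1}) \;=\; z_{j_1}(t_1')\cdots z_{j_{n-1}}(t_{n-1}')\cdot H_{n-1}, \qquad H_{n-1}:=\prod_{l<n,\,j_l<0} t_l^{-\alpha_{|j_l|}^\vee},
\]
with the $t_l'$ given by the claimed product. Right-multiplying by $x_{j_n}(t_n)=z_{j_n}(t_n)h_n$ requires commuting $H_{n-1}$ past $z_{j_n}(t_n)$, which by the rules above transforms $z_{j_n}(t_n)$ into $z_{j_n}(t_n')$ with $t_n' = t_n\prod_{l<n,\,j_l<0} t_l^{\sign(-j_n)\,a_{|j_l|,|j_n|}}$, exactly matching the statement (under the convention $a_{j,k}:=a_{|j|,|k|}$ for signed indices). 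Since torus elements commute among themselves, the leftover Cartan product $H_{n-1}h_n$ equals $H_n$, completing the induction.

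The main obstacle is purely notational: one must line up the convention that $\sign(-j)$ encodes the flip between $U_{+\alpha_j}$ and $U_{-\alpha_j}$ in the two commutation formulas, and decide that Cartan-matrix entries indexed by negative integers are taken in absolute value. With those conventions fixed at the outset, the argument reduces to the mechanical bookkeeping above, and the fact that migrating different $h_l$'s to the right does not interact (because torus elements commute) confirms that the cumulative exponents of $t_k$ are exactly what the statement predicts.
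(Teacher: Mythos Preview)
Your proposal is correct and follows exactly the same approach as the paper's own proof. The paper gives only a two-line sketch—recording that $x_{-i}(t)=y_i(t)t^{-\alpha_i^\vee}$ for negative indices and that $hx_i(t)=x_i(h^{\alpha_i}t)h$, $hy_i(t)=y_i(h^{-\alpha_i}t)h$ for $h\in H$—whereas you have spelled out the induction and the sign bookkeeping in full; the underlying argument is identical.
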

\begin{proof}
	First of all, write $x_{i_l}(t)=y_{j_l}(t)t^{-\alpha_{i_l}^\vee}$ for $j_l<0$. Also note we have: $hx_i(t)=x_i(h^{\alpha_i}t)h$ and $hy_i(t)=y_i(h^{-\alpha_i}t)h$ for any $h\in H$. 
\end{proof}

\begin{Pro}\label{tropoffun}
  For a decorated word $(\mathbf{i},K,L)$ with associated double word $\mathbb{I}=(j_1,\ldots,j_n)$ and a dominant integral weight $\lambda$, a bi-homogeneous function $f_{w\lambda,\delta}\in V_\lambda\otimes V'_\lambda\subset \mathbb{Q}[G]$ of degree $(w\lambda,\delta)$ for some $w\in W$ has evaluation
  \[
    f_{w\lambda,\delta}(x_{\mathbb{I}}(t_1,\cdots,t_n))=\sum_{\pi} N_\pi t_1^{a_1(\pi)}\cdots t_m^{a_m(\pi)},
  \]
  where $N_\pi\in \mathbb{Q}^*$ and the sum on RHS is over all $(\mathbf{i},K,L)$-trails for $f_{w\lambda,\delta}$ from $w\lambda$ to $\delta$ in $V_\lambda$, and
  \[
  	a_l(\pi)=\left\{
  	\begin{aligned}
    	\dfrac{1}{2}(\gamma_{l-1}-\gamma_l)\alpha_{i_l}^\vee, &\quad \text{if~} j_l>0\\
    	-\dfrac{1}{2}(\gamma_{l-1}+\gamma_l)\alpha_{i_l}^\vee, &\quad \text{if~} j_l<0
  	\end{aligned}\right.
  \]
  Moreover if $f_{w\lambda,\delta}$ is a positive rational function, then
  \[
    f_{w\lambda,\delta}^t(x_{\mathbf{i}}^\sigma(t_1,\cdots,t_m))=\min_{\pi}\left\{ {a_1(\pi)}t_1+\cdots+ {a_m(\pi)}t_m \right\},
  \]
  where the RHS is over all $(\mathbf{i},(k,\sigma))$-trails for $f_{w\lambda,\delta}$ from $w\lambda$ to $\delta$ in $V_\lambda$.
\end{Pro}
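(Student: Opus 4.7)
The plan is to reduce Proposition \ref{tropoffun} to the already-proven Lemma \ref{trailtocomput} via the change of factorization coordinates given in Lemma \ref{xtoz}. Namely, write
\[
x_{\mathbb{I}}(t_1,\ldots,t_n)=z_{\mathbb{I}}(t_1',\ldots,t_n')\cdot h(t),\qquad h(t):=\prod_{l:\,j_l<0} t_l^{-\alpha_{j_l}^\vee}\in H,
\]
with the monomial substitution $t_l'=t_l\prod_{k<l,\,j_k<0}t_k^{\sign(-j_l)a_{j_k,j_l}}$. Since $f_{w\lambda,\delta}$ has bi-degree $(w\lambda,\delta)$, right translation by $h(t)$ produces a torus character, so
\[
f_{w\lambda,\delta}(x_{\mathbb{I}}(t))
=h(t)^{\delta}\cdot f_{w\lambda,\delta}(z_{\mathbb{I}}(t'))
=\Bigl(\prod_{l:\,j_l<0}t_l^{-\langle\delta,\alpha_{j_l}^\vee\rangle}\Bigr)\cdot f_{w\lambda,\delta}(z_{\mathbb{I}}(t')).
\]
Then I would apply Lemma \ref{trailtocomput} to expand $f_{w\lambda,\delta}(z_{\mathbb{I}}(t'))$ as $\sum_{\pi}N_\pi(t')^{c(\pi)}$, the sum being over $(\mathbf{i},K,L)$-trails $\pi=(\gamma_0=w\lambda,\ldots,\gamma_n=\delta)$ for $f_{w\lambda,\delta}$ in $V_\lambda$, with $c_l(\pi)$ the non-negative integers from condition $(1)$ in the definition of a trail.

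The main verification is then a bookkeeping step: after substituting for $t_l'$ and multiplying by the torus contribution, the total exponent of each $t_k$ in the monomial associated to $\pi$ equals $a_k(\pi)$. For $k$ with $j_k>0$, only $t_k$ itself contributes (neither $h(t)$ nor the later $t_l'$ involve $t_k$), and the exponent is just $c_k(\pi)$; the relation $\gamma_{k-1}-\gamma_k=c_k(\pi)\alpha_{j_k}$ immediately gives $c_k(\pi)=\tfrac12(\gamma_{k-1}-\gamma_k)\cdot\alpha_{j_k}^\vee=a_k(\pi)$. For $k$ with $j_k<0$, three contributions combine:
\[
c_k(\pi)\;+\;\sum_{l>k}\sign(-j_l)\,a_{j_k,j_l}c_l(\pi)\;-\;\langle\delta,\alpha_{j_k}^\vee\rangle.
\]
Using the telescoping identity $\delta-\gamma_k=\sum_{l>k}\sign(j_l)c_l(\pi)\alpha_{j_l}$ (pair with $\alpha_{j_k}^\vee$), the last two terms collapse to $-\langle\gamma_k,\alpha_{j_k}^\vee\rangle$, and the total exponent is $c_k(\pi)-\langle\gamma_k,\alpha_{j_k}^\vee\rangle$. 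Since $\gamma_{k-1}=\gamma_k-c_k(\pi)\alpha_{j_k}$ when $j_k<0$, one checks that this equals $-\tfrac12(\gamma_{k-1}+\gamma_k)\cdot\alpha_{j_k}^\vee=a_k(\pi)$, as required.

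For the tropicalization, I would invoke Proposition \ref{positiveonU}: the chart $x_{\mathbb{I}}\colon\mathbb{T}^n\to L^{e,w}\hookrightarrow B^-$ is positive with respect to $\Theta_{B^-}$, so the pullback of any positive rational $f_{w\lambda,\delta}$ is a subtraction-free rational function in $t_1,\ldots,t_n$. The first part of the proposition already exhibits this pullback as a Laurent polynomial with exponents $\{a(\pi)\}_\pi$; positivity prevents any leading-term cancellation once like monomials are gathered, so its tropicalization is literally the minimum $\min_\pi\sum_l a_l(\pi)\,t_l$ over the support. The main obstacle is nothing conceptual but rather the sign-bookkeeping in the middle paragraph, where one has to track $\sign(j_l)$ carefully across the substitution in Lemma \ref{xtoz} and the shift coming from right-translation by $h(t)$; once these signs are aligned via the telescoping trail identity, the two expressions for $a_k(\pi)$ match on the nose.
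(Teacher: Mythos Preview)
Your argument is essentially the paper's own: combine Lemma~\ref{trailtocomput} with the change of variables in Lemma~\ref{xtoz}, then track exponents. Your bookkeeping for $j_k>0$ and $j_k<0$ matches the paper's computation (note a harmless sign slip in your telescoping identity: it should read $\gamma_k-\delta=\sum_{l>k}\sign(j_l)c_l(\pi)\alpha_{j_l}$, but your conclusion $-\langle\gamma_k,\alpha_{j_k}^\vee\rangle$ is correct).

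For the tropicalization, your justification is a bit looser than the paper's. You say ``positivity prevents any leading-term cancellation once like monomials are gathered,'' but positivity of $f_{w\lambda,\delta}$ as a rational function does not by itself force every $N_\pi>0$, nor does it rule out two trails sharing an exponent vector. The paper handles this by first observing that distinct trails $\pi\neq\pi'$ give distinct exponent vectors $a(\pi)\neq a(\pi')$ (so the monomials are genuinely the support of the Laurent polynomial), and then invoking \cite[Corollary~4.10(a)]{BKII} to pass from a positive rational function that is a Laurent polynomial with distinct exponents to the $\min$ formula. The paper's subsequent remark makes the point explicit: what is used is weaker than $N_\pi>0$. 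You should insert the distinct-exponents observation and cite the BKII corollary rather than relying on a cancellation heuristic.
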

\begin{proof}
  The first part of the statement is just the combination of Lemma \ref{trailtocomput} and Lemma \ref{xtoz}. Note that $c_l=\sign(j_l)(\gamma_{l-1}-\gamma_l)\alpha_{j_l}^\vee/2$. Focus on monomial term. For $j_l>0$, the parameter $t_l$ only appears in $t_l'$ who has power $c_l$. So the power of $t_l$ is $c_l$. For $j_l<0$, the parameter $t_l$ appears in $t_k'$ for $k>l$ who has power $c_k$ and in the additional torus $\prod_{j_l<0}t_l^{-\alpha_{j_l}^\vee}$. Put all this together, we have
  \[
  	\langle-\alpha_{j_l}^\vee, \delta \rangle+\sum_{k>l}c_k\sign(-i_k)\langle\alpha_{j_k}, \alpha_{j_l}^\vee\rangle+c_l=-\dfrac{1}{2}(\gamma_{l-1}+\gamma_l)\alpha_{i_l}^\vee.
  \]
  For the second part, note that for trails $\pi$ and $\pi'$, the vector 
  \[
  	({a_1(\pi)}t_1,\ldots,{a_m(\pi)}t_m), \ \text{and}\ ({a_1(\pi')}t_1,\ldots,{a_m(\pi')}t_m)
  \]
  are linear independent. By \cite[Corollary 4.10(a)]{BKII}, we find the result.
\end{proof}
\begin{Rmk}
  For the simply-laced simple group, one can actually show that all the coefficients $N_\pi$ of $f_{\lambda_i}$ are positive. One may extend this result to any semi-simple group. What we used in the proof is weaker than all coefficients $N_\pi$ are positive. 
\end{Rmk}

By this proposition, one can describe the tropicalization of minimal lift $f_{\lambda_i}:=F_{\lambda_i}\circ T$ since we know that $f_{\lambda_i}$ is a positive rational function having bidegree $(\lambda_i,w_0\omega_i+\omega_i)$.

\begin{proof}[Proof of Theorem \ref{comb}]
By Lemma \ref{positiveonU}, to decorated reduced word $(\mathbf{i},K,L)\in \widehat{R}(w_0)$ we have a toric chart $\xi_{\mathbb{I}}$ on $L^{e,w_0}$. Then we need to compute the potential $\overline{\Delta}_2$ on $U\times H^2$, where
\[
   \overline{\Delta_{2}}(u,h_1,h_2)=\chi^{\st}(u)+\Phi_{BK}(h_2u^Th_1^{w_0}),
\]
  Recall that the function $\Phi_{BK}$ can be written using generalized minors as: 
\[
  \Phi_{BK}=\sum_{i\in I}\frac{\Delta_{w_0s_i\omega_i,\omega_i}+\Delta_{w_0\omega_i,s_i\omega_i}}{\Delta_{w_0\omega_i,\omega_i}}\in \mathbb{Q}[G^{w_0}], 
\]
and  the stand character $\chi^{\st}$ can be written as $\chi^{\st}=\sum \Delta_{\omega_i,s_i\omega_i}$. 

Next let's compute each term in the potential $\overline{\Delta}_2$ using $\xi_{\mathbb{I}}$. Denote by $v=v_K$ for simplicity. Then for any $w\in W$, we compute the characters of $u$ first:
\[
  \Delta_{\omega_i,s_i\omega_i}(u)=\Delta_{\omega_i, s_i\omega_i}([\overline{v}^{-1}x_{\mathbb{I}}]_+)=\frac{\Delta_{\omega_i, s_i\omega_i}([\overline{v}^{-1}x_{\mathbb{I}}]_{\geqslant 0})}{\Delta_{\omega_i, \omega_i}([\overline{v}^{-1}x_{\mathbb{I}}]_0)}=\frac{\Delta_{v\omega_i, s_i\omega_i}(x_{\mathbb{I}})}{\Delta_{v\omega_i, \omega_i}(x_{\mathbb{I}})}=\Delta_{v\omega_i, s_i\omega_i}(x_{\mathbb{I}}).
\]
where we use the fact $\Delta_{v\omega_i, \omega_i}(x)=1$ for $x\in L^{v,v'}$. Then we compute the minors of form $\Delta_{w\omega_i,\omega_i}$:
\begin{align*}
  \Delta_{w\omega_i,\omega_i}(h_2u^Th_1^{w_0})&=h_1^{w_0\omega_i}h_2^{w\omega_i}\Delta_{\omega_i, w\omega_i}([\overline{v}^{-1}x_{\mathbb{I}}]_+)=h_1^{w_0\omega_i}h_2^{w\omega_i}\frac{\Delta_{\omega_i, w\omega_i}([\overline{v}^{-1}x_{\mathbb{I}}]_{\geqslant 0})}{\Delta_{\omega_i, \omega_i}([\overline{v}^{-1}x_{\mathbb{I}}]_0)}\\
  &=h_1^{w_0\omega_i}h_2^{w\omega_i}\frac{\Delta_{v\omega_i, w\omega_i}(x_{\mathbb{I}})}{\Delta_{v\omega_i, \omega_i}(x_{\mathbb{I}})}=h_1^{w_0\omega_i}h_2^{w\omega_i}\Delta_{v\omega_i, w\omega_i}(x_{\mathbb{I}}).
\end{align*}
For $w=w_0$, we have:
\[
  \Delta_{v_K\omega_i,w_0\omega_i}(x_{\mathbb{I}})=\Delta_{w_K\omega_{i^*},\omega_{i^*}}\left(((x_{\mathbb{I}})^\iota)^T\right).
\]
Note that $(x_i^\iota(t))^T=x_{-i}(t)t^{\alpha_{i}^\vee}$. Moreover, we have the commutation relation: for $j\in I$ and $p\in \mathbb{G}_{\bf{m}}$,
\[
  p^{\lambda^\vee}x_{j}(t)=x_j(tp^{\langle \alpha_j,\lambda^\vee\rangle})p^{\lambda^\vee} \text{~and~} p^{\lambda^\vee}x_{-j}(t)=x_{-j}(tp^{-\langle \alpha_j,\lambda^\vee\rangle})p^{s_j\lambda^\vee}.
\]
Thus one can rewrite
\[
  ((x_{\mathbb{I}}(t_1,\ldots,t_m))^\iota)^T=x_{-\mathbb{I}}(t'_1,\ldots,t'_m)\cdot \prod t_l^{r_l \alpha_{i_l}^\vee},
\]
where $r_l=\prod_{k> l, j_k>0} s_{j_k}$ in decreasing order. Since $x_{-\mathbb{I}}(t'_1,\ldots,t'_m)\in L^{w_K,v_K}$, so we get
\[
  \Delta_{v_K\omega_i,\omega_i}(x_{\mathbb{I}}(t_1,\ldots,t_m))=\prod t_l^{\langle \omega_i,r_l\alpha_{i_l}^\vee\rangle}.
\]

At then end, the last term $\Delta_{w_0\omega_i,s_i\omega_i}$ can be computed using the minimal lift $F_{\lambda_i}$:
\begin{align*}
	\Delta_{w_0\omega_i,s_i\omega_i}(h_2u^Th_1^{w_0})&=h_1^{w_0s_i\omega_i}h_2^{w_0\omega_i}\Delta_{w_0\omega_i, s_i\omega_i}([\overline{v}^{-1}x_{\mathbb{I}}]_+^T)=h_1^{w_0s_i\omega_i}h_2^{w_0\omega_i}\frac{F_{\lambda_i}([\overline{v}^{-1}x_{\mathbb{I}}]_{\geqslant 0}^T)}{\Delta_{\lambda_i, \lambda_i}([\overline{v}^{-1}x_{\mathbb{I}}]_0)}\\
 	&=h_1^{w_0s_i\omega_i}h_2^{w_0\omega_i}\frac{F_{\lambda_i}\big((\overline{v}^{-1}x_{\mathbb{I}})^T\big)}{\Delta_{v\lambda_i, \lambda_i}(x_{\mathbb{I}}}=h_1^{w_0s_i\omega_i}h_2^{w_0\omega_i}F_{\lambda_i}\big((\overline{v}^{-1}x_{\mathbb{I}})^T\big),
\end{align*}
where we use the fact
\[
	\Delta_{\lambda_i, \lambda_i}(\overline{v}^{-1}x_{\mathbb{I}})=\Pi_{j\neq i} \Delta_{\omega_j, \omega_j}^{-a_{ji}}(\overline{v}^{-1}x_{\mathbb{I}})=1
\]
since $\lambda_i=\omega_i+s_i\omega_i=-\sum_{j\neq i}a_{ji}\omega_j$. Put all this together, we get
\begin{align}\label{potentialonM_2}
	\overline{\Delta_{2}}(u,h_1,h_2)=&\sum \Delta_{\omega_i,s_i\omega_i}(u)+\sum \frac{\Delta_{w_0s_i\omega_i,\omega_i}+\Delta_{w_0\omega_i,s_i\omega_i}}{\Delta_{w_0\omega_i,\omega_i}}(h_2u^Th_1^{w_0})\nonumber\\
	=&\sum \Delta_{v\omega_i, s_i\omega_i}(x_{\mathbb{I}})+\sum h_2^{\alpha_i^*}\frac{\Delta_{v\omega_i, w_0s_i\omega_i}(x_{\mathbb{I}})}{\Delta_{v\omega_i, w_0\omega_i}(x_{\mathbb{I}})}+\sum h_1^{\alpha_i^*}\frac{F_{\lambda_i}\big((\overline{v}^{-1}x_{\mathbb{I}})^T\big)}{\Delta_{v\omega_i, w_0\omega_i}(x_{\mathbb{I}})}.
\end{align}
Now by Proposition \ref{tropoffun}, the function \eqref{potentialonM_2} is Laurent polynomial in $t_i$'s. The coefficient $N_\pi$ for each function appeared in \eqref{potentialonM_2} is positive integral by \cite[Theorem 5.8]{BZ01} and Lemma \ref{positiveonU}. Thus the tropicalization of \eqref{potentialonM_2} gives a tropical cone in terms of $(\mathbf{i},(k,\sigma))$-trails.

Recall from definition of $H\star H\cong M^{(2)}$, we have the following projections:
\begin{align*}
    \hw_U=\hw_{H\star H}&\colon M^{(2)}\to H\ : \ (u,h_1,h_2)\mapsto h_1h_2\hw(\overline{w_0}u\overline{w_0}), \\
    \pi_{H\star H} &\colon M^{(2)}\to H^2\ : \ (u,h_1,h_2)\mapsto (h_1,h_2).
\end{align*} 
Note the highest weight map $\hw$ is determined by gennerlized minors in the following sense
\begin{equation}\label{hwproperty}
	\left(\hw(g)\right)^{w_0\omega_i}=\Delta_{w_0\omega_i,\omega_i}(g), \quad \forall g\in G^{w_0,e}, i\in \bm{I}.  
\end{equation}
Therefor we have: write $u=[\overline{v}^{-1}x_{\mathbb{I}}]_-^T$, 
\[
	\left( \hw_{H\star H}(u,h_1,h_2)\right)^{w_0\omega_i}=(h_1h_2)^{w_0\omega_i}\Delta_{v\omega_i, w_0\omega_i}(x_{\mathbb{I}}).
\]
Together with the tropical cone defined by \eqref{potentialonM_2}, one find the conclusion.
\end{proof}

\begin{appendices}
	\begin{proof}[Proof of Proposition \ref{inversemapapp}]
		First of all, since $\mathbb{Q}[\mathbb{A}^m]$ is an UFD, thus the rational function $\varphi_k$ is Laurent monomial in some irreducible polynomials in $m$ variables. Collect all these irreducible polynomials all $k\in [1,m]$, and denote them by $F_1,\ldots,F_{m'}$. 

		Next, we will show that composition of $\varphi$ with $F_i$ is a character of ${\mathbb T}_m$, {\em i.e.}, a monomial in $t_1,\ldots,t_m$. If not, suppose that $F_1$ does not satisfy this property, {\em i.e.}, $F(\varphi(t_1,\ldots,t_m)$ is not a monomial in $t_1,\ldots,t_m$. Thus, there exists a point $\tau=(\tau_1,\ldots,\tau_m)\in {\mathbb T}_m({\mathbb C})$ such that $F_1(\varphi(\tau))=0$ but $F_k(\varphi(\tau))\ne 0$ for $k>1$. This is a contradiction because $t_\ell=\prod_{k\geqslant 1} F_k^{a_k}$ for some $\ell$ with $a_1\ne 0$ and this fails for $t=\tau$. 

		Now we know, $F_i\circ \varphi$ is monomial in $t_1,\ldots,t_m$. Note that $m'\geqslant m$. If $m'>m$, there exists $F_l$ such that $F_l\circ \varphi$ is an alternating product of $F_i\circ \varphi$'s for $i\neq l$, which contradicts to the assumption that $F_l$ is irreducible.
	\end{proof}
\end{appendices}

\Addresses

\end{document}